\documentclass{article}
\usepackage[utf8]{inputenc}
\usepackage[margin=1in]{geometry}
\usepackage{amsmath}
\usepackage{amsthm}
\usepackage{amssymb}
\usepackage{graphicx}
\usepackage{siunitx}
\usepackage{graphics}
\usepackage{booktabs}
\usepackage{hyperref}
\usepackage{cite}
\usepackage{booktabs}
\usepackage{enumerate}
\usepackage{enumitem}
\usepackage{authblk}

\usepackage{algorithm}
\usepackage{algorithmicx}
\usepackage[noend]{algpseudocode}
\algrenewcommand\algorithmicrequire{\textbf{Precondition:}}
\newcommand*\Let[2]{\State #1 $\gets$ #2}

\include{refs.bib}

\newcommand{\bfx}{\ensuremath{\mathbf{x}}}
\newcommand{\bfy}{\ensuremath{\mathbf{y}}}
\newcommand{\bfz}{\ensuremath{\mathbf{z}}}
\newcommand{\bfh}{\ensuremath{\mathbf{h}}}
\newcommand{\bfxi}{\ensuremath{\boldsymbol{\xi}}}
\newcommand{\bfgamma}{\ensuremath{\boldsymbol{\gamma}}}
\newcommand{\bfnu}{\ensuremath{\boldsymbol{\nu}}}

\newcommand{\bfu}{\ensuremath{\mathbf{u}}}
\newcommand{\bfv}{\ensuremath{\mathbf{v}}}
\newcommand{\bfn}{\ensuremath{\mathbf{n}}}
\newcommand{\bfe}{\ensuremath{\mathbf{e}}}
\newcommand{\bff}{\ensuremath{\mathbf{f}}}

\newcommand{\bfg}{\ensuremath{\mathbf{g}}}

\newcommand{\bbr}{\ensuremath{\mathbb{R}}}
\newcommand{\jump}[1]{\ensuremath{[#1]}}
\newcommand{\avg}[1]{\ensuremath{\{#1\}}}
\newcommand{\subbar}[2]{\ensuremath{\left. #1 \right|_{#2}}}

\providecommand{\e}[1]{\ensuremath{\times 10^{#1}}}

\newcommand{\bigo}[1]{\mathcal{O}(#1)}

\DeclareMathOperator{\Tr}{Tr}

\newtheorem{proposition}{Proposition}

\title{Jump splicing schemes for elliptic interface problems and the incompressible Navier-Stokes equations}

\author[ ]{Ben Preskill\thanks{Corresponding author. E-mail address: bpreskill@berkeley.edu.}}
\author[ ]{James A. Sethian}
\affil[ ]{Department of Mathematics, University of California, Berkeley}


\date{May 2015}

\begin{document}
\maketitle

\begin{abstract}

We present a general framework for accurately evaluating finite difference operators in the presence of known discontinuities across an interface. Using these techniques, we develop simple-to-implement, second-order accurate methods for elliptic problems with interfacial discontinuities and for the incompressible Navier-Stokes equations with singular forces. To do this, we first establish an expression relating the derivatives being evaluated, the finite difference stencil, and a compact extrapolation of the jump conditions. By representing the interface with a level set function, we show that this extrapolation can be constructed using dimension- and coordinate-independent normal Taylor expansions with arbitrary order of accuracy. Our method is robust to non-smooth geometry, permits the use of symmetric positive-definite solvers for elliptic equations, and also works in 3D with only a change in finite difference stencil. We rigorously establish the convergence properties of the method and present extensive numerical results. In particular, we show that our method is second-order accurate for the incompressible Navier-Stokes equations with surface tension.
\end{abstract}

\section{Introduction}
\label{sec:intro}

Elliptic interface problems of the form
\begin{equation} \label{eqn:jump-elliptic}
\left\{ \setlength\arraycolsep{2pt} \begin{array}{rll}
-\nabla \cdot (\beta \nabla u) &= f & \text{on } \Omega \setminus \Gamma \\
u &= h &\text{on } \partial \Omega \\
\jump{u} &= g^0 & \text{across } \Gamma \\
\jump{\beta \partial_\bfn u} &= g^1 & \text{across } \Gamma
\end{array} \right.
\end{equation}
arise in a wide variety of applications in physics and engineering, including electrodynamics, fluid mechanics, heat transfer, and shape optimization. Here $\Omega \subset \bbr^d$ is a domain of interest, $\Gamma \subset \Omega$ is a smooth, closed, codimension-one interface, $\bfn$ is a unit normal to $\Gamma$, $\partial_\bfn u = \nabla u \cdot \bfn$, and we define the ``jump'' in $u$ as
\[ \jump{u}(\bfx) = u^+(\bfx) - u^-(\bfx), \]
where $u^\pm(\bfx) = \lim_{\epsilon \to 0^+} u(\bfx \pm \epsilon \bfn)$. Both $u$ and $\beta$ may be discontinuous across the interface, but are otherwise smooth.

Problems of the form \eqref{eqn:jump-elliptic} often occur in the discretization of time-dependent free interface problems. For example, elliptic interface problems must be solved when projection methods for the Navier-Stokes equations are applied in the context of singular forces on an interface, as in the case of surface tension or membrane elasticity.

One approach to solving \eqref{eqn:jump-elliptic} is through a finite element method acting on an unstructured mesh fitted to the interface $\Gamma$. However, when the interface is evolving, as in time-dependent problems with a free surface, remeshing has complications and stability drawbacks. As an alternative to remeshing, immersed boundary, immersed interface, and embedded boundary methods have been developed to solve \eqref{eqn:jump-elliptic} on unfitted meshes, and in particular on Cartesian grids.

An an alternative, in this paper we introduce the ``jump splice'', a general finite difference approach to approximating, with arbitrary order of accuracy, differential operators in the presence of discontinuities across an interface. We do so by extending jump conditions off of the interface and creating a normal Taylor expansion that fully captures the jump structure of the solution across the interface. This leads to an auxiliary set of equations that we can then solve with high accuracy to build the solution. Our approach has links to previous techniques developed to solve \eqref{eqn:jump-elliptic}, but the mathematical simplicity of the jump splice provides numerous advantages:
\begin{itemize}
\item{The approach has rigorous convergence estimates.}
\item{It can be used with arbitrary finite difference operators and arbitrary-order jump conditions.}
\item{It is straightforward to implement in both 2D and 3D.}
\item{The method makes use of coordinate-free normal derivatives and surface gradients.} 
\item{It avoids component-by-component dimensional reduction, and instead formulates the problem with respect to the
jump conditions and the implicitly defined geometry of the interface, independent of grid-interface orientation.}
\item{The method is not limited to achieving an $\bigo{h}$ truncation error near the interface.}
\end{itemize}
We use these techniques to solve elliptic interface problems and the singular force Navier-Stokes equations with second-order accuracy as well as perform quadrature on implicitly defined interfaces with fourth order accuracy. Much of our discussion here parallels the presentation in \cite{preskill:2015}, where more extensive results are shown. 

The remainder of the paper is structured as follows. In the next section, we review existing work on methods for elliptic interface problems and the singular force Navier-Stokes equations; in Section \ref{sec:splice} we develop the mathematical foundations for the jump splice and describe how to evaluate arbitrary finite difference operators in the presence of discontinuities; in Section \ref{sec:elliptic}, we describe how the jump splice leads to a simple method for solving elliptic equations and show extensive convergence results; in Section \ref{sec:integration}, we briefly discuss an application to integration on implicitly defined domains and show convergence results; and finally in Section \ref{sec:ns}, we develop a fully second-order method for the singular force Navier-Stokes equations based on jump splice methodology and show detailed convergence analysis for the case of surface tension.

\section{Previous Work}
\label{sec:previous-work}

Peskin's Immersed Boundary Method (IBM) \cite{peskin:1977, peskin:2002} is a first-order accurate finite difference approach to solving both \eqref{eqn:jump-elliptic} as well as the singular force Navier-Stokes equations. By using smooth approximations to the Dirac $\delta$ function, the IBM approximates jump conditions and singular forces defined on the interface with source terms defined on an underlying grid. The IBM is straightforward to implement, but does not sharply resolve discontinuities due to the use of a smoothing operation. See \cite{lai:2000, roma:1999, fadlun:2000, cortez:2000} for further development of the IBM, including a formally second-order accurate approach as well as use in complex 3D fluid flow. In \cite{tornberg:2004, tornberg:2004:2, engquist:2005}, Tornberg and Engquist generalize the IBM approach and allow for higher-order approximations of singular source terms. See also \cite{mittal:2005} for a review of IBM techniques.

A second-order finite difference approach to solving \eqref{eqn:jump-elliptic} is the Immersed Interface Method of LeVeque and Li \cite{leveque:1994}. Designed to solve elliptic interface problems without smoothing, the IIM uses coordinate-split Taylor expansions to integrate jump conditions into the finite difference stencil of the elliptic operator, thereby obtaining $\bigo{h}$ local truncation error in the vicinity of the interface. The IIM retains the standard 5-point stencil when $\beta$ is smooth, but leads to a non-symmetric system derived from a local constraint problem when $\beta$ is discontinuous across the interface. The IIM generally requires component-wise evaluation of derivatives of the jump conditions along the interface, which can lead to subtle implementation details, particularly in 3D. The works \cite{li:2001:2, deng:2003, chen:2008, wiegmann:2000, berthelsen:2004, li:1998, adams:2005} describe further development of the IIM for elliptic problems. The IIM has also been used extensively for solving the Stokes and Navier-Stokes equations in the presence of singular forces \cite{leveque:1997, tan:2008, le:2006, xu:2008, xu:2006:2, li:2001, leveque:2003}. A comprehensive overview of the IIM can be found in \cite{li:2006}.

Another finite difference approach introduces fictitious degrees of freedom on a Cartesian grid with values determined by the jump conditions through extrapolation; see, for example, the Ghost Fluid Method (GFM) \cite{liu:2000}. The GFM as formulated in \cite{liu:2000} achieves a fully symmetric linear discretization, even for the case of discontinuous $\beta$, but is limited to first-order accuracy. Other approaches based on fictitious points have been employed to achieve higher order accuracy, though typically at the cost of ease of implementation or symmetry of the stencil. For example, the Matched Interface and Boundary (MIB) method \cite{zhou:2006} determines fictitious values by matching one-sided discretizations of the jump conditions with high-order extrapolations of the solution. The MIB stencil is determined by local geometry, which results in a non-symmetric linear problem. In \cite{zhou:2006:2}, the MIB is extended to handle interfaces with high curvature and in \cite{yu:2007}, the MIB is adapted to 3D. The MIB has also been used to solve the Navier-Stokes equations with singular forces \cite{zhou:2012}. Another approach, the Coupling Interface Method (CIM) \cite{chern:2007}, uses a second-order extrapolation everywhere but at exceptional points, where a first-order approximation is used instead. Due to the use of one-sided finite difference stencils, the CIM likewise leads to a non-symmetric linear problem. See \cite{shu:2014} for recent development of the CIM. More recently, second-order accuracy with a symmetric linear system in the general case has been achieved in \cite{bedrossian:2010} with the use of a variational method to define the stencil combined with a Lagrange multiplier approach to enforce the jump conditions. These techniques have recently been extended to 3D in \cite{hellrung:2012} and applied to Stokes flow in \cite{assencio:2013}. Higher-order accuracy on Poisson problems has also been recently obtained for a correction function method similar to the GFM \cite{marques:2011}.

There are also a number of finite element method (FEM) approaches to solving \eqref{eqn:jump-elliptic}; see, for example, the extended finite element method (XFEM) \cite{belytschko:2001, moes:1999, ji:2004, vaughan:2006}. The XFEM adds additional discontinuous basis elements to the standard finite element basis, along with additional degrees of freedom, in order to capture the discontinuous structure of the solution. Recently, a high-order XFEM method using a discontinuous-Galerkin approach has been developed \cite{brandstetter:2015}. XFEM has also been used to solve the Navier-Stokes equations with surface tension \cite{gross:2007}. Other FEM methods that introduce additional degrees of freedom include \cite{dolbow:2009, hansbo:2002, hansbo:2004, massjung:2012}. In these methods, as with XFEM, the solution spaces do not typically allow the interface conditions to be exactly satisfied, so linear constraints are added in the form of Lagrange multipliers or penalty terms, either of which can incur significant computational cost. In contrast, other FEM approaches \cite{li:1998:2, li:2003, ji:2014, lin:2015, hou:2005} alter the basis functions to satisfy the interface constraints directly. Similarly, the Exact Subgrid Interface Correction Scheme (ESIC) \cite{huh:2008} and Simplified Exact Subgrid Interface Correction Scheme (SESIC) \cite{discacciati:2013} methods integrate the jump conditions into the formulation of the basis functions and provide a fast and simple approach, with a symmetric linear system, when $\jump{\beta} = 0$. FEM methods in general enjoy symmetric positive definitive discretizations, except with Lagrange multipliers wherein the discretization may be symmetric indefinite, but often suffer poorer conditioning, particularly when stabilization is used.

Finite volume methods for \eqref{eqn:jump-elliptic} have also been developed. For example, Oevermann and Klein \cite{oevermann:2006, oevermann:2009} present a second-order finite volume method for elliptic interface problems by solving local constraint equations, though still arrive at a non-symmetric system in the general case.

\section{The Jump Splice}
\label{sec:splice}

In this section, we develop a mathematically rigorous methodology for evaluating arbitrary finite difference stencils in the presence of known discontinuities specified across an interface. The result is a highly general framework for evaluating derivatives and solving differential equations with known jump conditions. We proceed as follows.
\begin{itemize}
\setlength\itemsep{0.2em}
\item We begin by motivating the theoretical considerations that lead to the jump splice in Section \ref{sec:splice:motivation}.
\item In Section \ref{sec:splice:splice}, we define the jump splice for arbitrary linear finite difference operators and prove Proposition \ref{prop:jump-splice}, the key result underlying our technique. We define the jump extrapolation, but we do not yet construct it.
\item Next, we show an intuitive approach, though not what we use in practice, to calculating the jump extrapolation in Section \ref{sec:splice:extrapolation}.
\item In Section \ref{sec:splice:accuracy}, we put precise limits on how accurately the jump extrapolation needs to be computed for the guarantees of Proposition \ref{prop:jump-splice} to hold.
\item We then describe a straightforward bootstrapping procedure for constructing the jump extrapolation in practice in Section \ref{sec:splice:calculation}.
\item Finally, in Section \ref{sec:splice:implementation}, which is essentially self-contained, we lay out the full algorithm for implementing the jump splice.
\item We briefly show numerical results in Section \ref{sec:splice:results}. We will present a more comprehensive convergence analysis in Section \ref{sec:elliptic}.
\end{itemize}

\subsection{Notation}
\label{sec:splice:notation}

In what follows, we will write $\phi : \Omega \to \bbr$ for the signed distance function corresponding to the interface $\Gamma$. We use the convention that $\phi > 0$ in the interior of the region bounded by $\Gamma$ and take $\bfn = \nabla \phi$ as the inward-pointing unit normal. We also write $\Omega^+$ and $\Omega^-$ for the interior and exterior of the region bounded by $\Gamma$, respectively. See \cite{osher:1988, sethian:1999, osher:2002} for detailed discussion of signed distance and level set functions and their development.

For a function $u : \Omega \to \bbr$, we define the surface gradient as
\begin{equation}
\nabla_s u = \nabla u - (\partial_\bfn u) \bfn,
\end{equation}
where $\partial_\bfn u = \nabla u \cdot \bfn$ is the normal derivative. We also define the surface Laplacian as
\begin{equation}
\Delta_s u = \nabla_s \cdot (\nabla_s u),
\end{equation}
where 
\begin{equation}
\nabla_s \cdot \bfu = \nabla \cdot \bfu - \bfn \cdot \nabla \bfu \cdot \bfn
\end{equation}
is the surface divergence for $\bfu : \Omega \to \bbr^m$. Here and throughout the paper, we interpret $\nabla \bfu$ as the matrix with $(i,j)$ entry equal to the $j$-th derivative of the $i$-th component of $\bfu$. Note that $\nabla_s u$, $\nabla_s \cdot \bfu$, and $\Delta_s u$ are defined not just on $\Gamma$, but in fact everywhere that $\bfn$ is defined. If $g : \Omega \to \bbr$ has the property that $\subbar{g}{\Gamma} = \jump{u}$, then
\begin{equation}
\label{eqn:nablas-jump}
\subbar{\nabla_s g}{\Gamma} = \jump{\nabla_s u},  
\end{equation}
and
\begin{equation}
\label{eqn:deltas-jump}
\subbar{\Delta_s g}{\Gamma} = \jump{\Delta_s u}.  
\end{equation}
Here \eqref{eqn:nablas-jump} follows by locally parametrizing the interface and taking tangential derivatives and \eqref{eqn:deltas-jump} follows as $\Delta_s u = \Tr(\nabla_s\nabla_s u)$. We will often abuse notation slightly and write $\nabla_s \jump{u} = \jump{\nabla_s u}$ and $\Delta_s \jump{u} = \jump{\Delta_s u}$. These definitions can be extended component-wise to $\bfu,\ \bfg : \Omega \to \bbr^m$. 

We write $C^k(U)$ for the space of functions on an open set $U \subset \bbr^d$ with continuous derivatives up to order $k$ and $LC^k(U)$ for the space of functions on $U$ with Lipschitz continuous derivatives up to order $k$. Recall that a function $\bfu : U \to \bbr^m$ is Lipschitz if there exists a constant $K$ such that
\[ |\bfu(\bfx) - \bfu(\bfy)| \leq K |\bfx - \bfy| \quad \text{for all } \bfx,\bfy \in U,\]
where $|\cdot|$ denotes the Euclidean norm. We will also write $LC^k(U_1, U_2)$ for the space of functions $\bfu$ with domain $U_1 \cup U_2$ such that $\subbar{\bfu}{U_1} \in LC^k(U_1)$ and $\subbar{\bfu}{U_2} \in LC^k(U_2)$. Note that $LC^k(U_1, U_2)$ is not in general the same as $LC^k(U_1 \cup U_2)$ due to the non-locality of the Lipschitz property. 

Finally, we define $C^k(\Gamma)$ to be the space of functions defined on the interface $\Gamma$ that can be extended to a function in $C^k(U)$ for some open set $U$ containing $\Gamma$. We define $LC^k(\Gamma)$ analogously.\footnote{Note that our definitions of $C^k(\Gamma)$ and $LC^k(\Gamma)$ here do not require $\Gamma$ to be a $C^k$ submanifold.}

\subsection{Motivation}
\label{sec:splice:motivation}

For notational simplicity, we will often assume that $\Omega \subset \bbr^2$ and that all Cartesian grids have uniform spacing. However, jump splice techniques extend naturally to $\bbr^3$ and to non-uniform grid spacing with only a change in finite difference operator.  

Let $u_{i,j} = u(\bfx_{i,j})$ with $\bfx_{i,j} = (ih, jh)$ be the values of a function $u$ defined on a Cartesian grid with uniform spacing $h$. The standard 5-point discretization of the Laplacian is then defined by 
\[(\Delta^h u)_{i,j} = \frac{1}{h^2}\left(u_{i+1,j} + u_{i-1,j} + u_{i,j+1} + u_{i,j-1} - 4u_{i,j}\right).\]
It is not difficult to show (see Proposition \ref{prop:laplacian-lipschitz} in the appendix) that
\begin{equation}
\label{eqn:laplacian-error}
(\Delta u)(\bfx_{i,j}) = (\Delta^h u)_{i,j} + \bigo{h^2},
\end{equation}
provided $u \in LC^3(U)$ for some open set $U$ containing the stencil cross
\[C_{i,j} = \{\lambda_1 \bfx_{i-1,j} + (1-\lambda_1) \bfx_{i+1,j} : \lambda_1 \in [0,1]\} \cup \{\lambda_2 \bfx_{i,j-1} + (1-\lambda_2) \bfx_{i,j+1} : \lambda_2 \in [0,1]\}. \]

Now suppose $u \in LC^3(\Omega^+, \Omega^-)$. Hence $u$ and its derivatives may not be continuous across $\Gamma$. At points $\bfx_{i,j}$ sufficiently close to the interface, the set $C_{i,j}$ will intersect $\Gamma$. Since $u$ may not be continuous at the point of intersection, the error estimate \eqref{eqn:laplacian-error} may fail. At these points $\bfx_{i,j}$, we are not able to accurately approximate $(\Delta u)(\bfx_{i,j})$ with a standard finite difference stencil. 

In fact, any fixed finite difference stencil will fail to achieve its expected order of accuracy in the presence of an interface discontinuity. In the next section, we will show that if we are provided with explicit jump information pertaining to $u$, we can ``splice'' away the discontinuity and accurately evaluate any linear finite difference operator.

\subsection{The Splice}
\label{sec:splice:splice}

We now define the jump splice. Consider a linear differential operator $D$ and a finite difference discretization $D^h_{p,q}$ with the property that
\begin{equation}
\label{eqn:d-error}
(Du)(\bfx_{i,j}) = (D^h_{p,q}u)_{i,j} + \bigo{h^p},
\end{equation}
provided $u \in LC^q(U)$ on some convex open set $U$ containing the stencil of $(D^h_{p,q}u)_{i,j}$. Here $q$ is the required smoothness, in the sense of $LC^q$, to obtain order $p$ accuracy.  Examples include the standard 5-point Laplacian (with $D = \Delta$, $p = 2$, and $q = 3$) and standard 4-point centered differences for calculating the gradient (with $D = \nabla$, $p = 2$, and $q = 2$). 

Now suppose $u \in LC^q(\Omega^+ , \Omega^-)$, and that we are given
\begin{equation}
\label{eqn:jump-conditions}
\left\{ 
\begin{aligned}
\jump{u} & = g^0\\
\jump{\partial_\bfn u} & = g^1 \\
\jump{\partial_\bfn^2 u} & = g^2 \\
\vdots & \\
\jump{\partial_\bfn^q u} & = g^q,
\end{aligned}
\right.
\end{equation}
where $g^i \in LC^{q-i}(\Gamma)$ for $0 \leq i \leq q$.\footnote{Recall that $LC^{q-i}(\Gamma)$ is the space of functions that admit an $LC^{q-i}(U)$ extension to an open set $U$ containing the interface.} Away from the interface, $D^h_{p,q}$ can be evaluated accurately with no additional work, but near $\Gamma$, we need to use the jump conditions \eqref{eqn:jump-conditions} to correct for the lack of smoothness in $u$ and thus to recover the error estimate \eqref{eqn:d-error}. Let
\[ \Gamma_\epsilon = \{ \bfx \in \Omega : |\phi(\bfx)| < \epsilon \}, \]
be the band of width $\epsilon = \bigo{h}$ around $\Gamma$, where $\epsilon$ is chosen so that the stencil of $D^h_{p,q}$ evaluated in $\Omega \setminus \Gamma_\epsilon$ does not cross the interface. In the remainder of this section, we motivate and prove the following key result.

\begin{proposition}[Splice Discretization]
\label{prop:jump-splice}
If $v \in LC^q(\Gamma_\epsilon)$ satisfies
\begin{equation}
\label{eqn:v-conditions}
\left\{ 
\begin{aligned}
\subbar{v}{\Gamma} & = g^0\\
\subbar{\partial_\bfn v}{\Gamma} & = g^1 \\
\subbar{\partial_\bfn^2 v}{\Gamma} & = g^2 \\
\vdots & \\
\subbar{\partial_\bfn^q v}{\Gamma} & = g^q,
\end{aligned}
\right.
\end{equation}
then we can discretize $Du$ as 
\begin{equation}
\label{eqn:discretization}
Du = D^h_{p,q}u - D^h_{p,q}(v H(\phi)) + (D^h_{p,q}v) H(\phi) + \bigo{h^p},
\end{equation}
to obtain a $p$th order accurate approximation in all of $\Omega$.
\end{proposition}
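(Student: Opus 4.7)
My plan is to split the right-hand side of \eqref{eqn:discretization} using linearity of $D^h_{p,q}$:
\[ D^h_{p,q}u - D^h_{p,q}(vH(\phi)) + (D^h_{p,q}v)H(\phi) = D^h_{p,q}\bigl(u - vH(\phi)\bigr) + (D^h_{p,q}v)H(\phi), \]
and handle the two pieces separately. The heuristic is that $vH(\phi)$, viewed as the piecewise function equal to $v$ on $\Omega^+$ and $0$ on $\Omega^-$, carries exactly the jump structure of $u$; so $w := u - vH(\phi)$ should be smooth enough across $\Gamma$ that \eqref{eqn:d-error} applies to it, while the extra term $(D^h_{p,q}v)H(\phi)$ reconstructs the piece of $Du$ that was subtracted off.

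\textbf{Key step: $w \in LC^q$ across $\Gamma$.} The main substantive step is to show that $w$ extends to an $LC^q$ function on a neighborhood of $\Gamma$. The conditions \eqref{eqn:v-conditions} together with the fact that $H(\phi)$ is the indicator of $\Omega^+$ give
\[ \jump{\partial_\bfn^i (vH(\phi))} = \subbar{\partial_\bfn^i v}{\Gamma} = g^i = \jump{\partial_\bfn^i u}, \quad 0 \leq i \leq q, \]
so every normal-derivative jump of $w$ up to order $q$ vanishes. Combined with the regularity $g^i \in LC^{q-i}(\Gamma)$, a local tangential/normal decomposition shows that every mixed partial derivative of $w$ of total order at most $q$ agrees from both sides of $\Gamma$, which upgrades one-sided smoothness to $LC^q$ regularity in a neighborhood of $\Gamma$.

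\textbf{Finishing.} Granted this regularity, \eqref{eqn:d-error} applied to $w$ gives $D^h_{p,q}w(\bfx_{i,j}) = (Dw)(\bfx_{i,j}) + \bigo{h^p}$ at every grid point. Since $vH(\phi)$ agrees locally with either $v$ or $0$ off $\Gamma$, we have $D(vH(\phi)) = (Dv)H(\phi)$ pointwise at grid points, and hence $Dw = Du - (Dv)H(\phi)$. For the remaining $(D^h_{p,q}v)H(\phi)$ term: at grid points in $\Gamma_\epsilon$, the choice of $\epsilon$ keeps the stencil inside $\Gamma_\epsilon$ where $v \in LC^q$, so \eqref{eqn:d-error} yields $D^h_{p,q}v = Dv + \bigo{h^p}$; at grid points outside $\Gamma_\epsilon$, $H(\phi)$ is constant on the stencil, so $(D^h_{p,q}v)H(\phi)$ and $D^h_{p,q}(vH(\phi))$ cancel identically and the value of $v$ outside $\Gamma_\epsilon$ is immaterial. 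Summing the two estimates produces $Du + \bigo{h^p}$ uniformly across $\Omega$.

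\textbf{Main obstacle.} The genuine work is the $LC^q$-across-$\Gamma$ claim for $w$: upgrading ``all normal-derivative jumps vanish'' plus ``$LC^q$ on each side'' to global $LC^q$ requires controlling tangential and mixed derivatives through $\Gamma$, which is precisely why the hypotheses are stated in the graded form $g^i \in LC^{q-i}(\Gamma)$. Once this regularity lemma is available, the remainder is linearity plus a case split on whether a grid point's stencil crosses the interface, which is essentially bookkeeping.
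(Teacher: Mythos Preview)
Your proposal is correct and closely tracks the paper's argument. The key regularity step you identify---that vanishing normal-derivative jumps up to order $q$ together with one-sided $LC^q$ regularity yields $LC^q$ across $\Gamma$---is exactly the paper's Proposition~\ref{prop:jump-lipschitz-iterated}, and you correctly flag it as the substantive work.

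The organizational difference is that the paper uses \emph{two} splices: the outer splice $w^- = u - vH(\phi)$ (your $w$) for grid points in $\Omega^-$, and an inner splice $w^+ = u + v(1-H(\phi))$ for grid points in $\Omega^+$. Since $w^\pm$ coincides with $u$ on its respective side, the identity $Dw^\pm = Du$ is immediate there, and one never has to separately bound $D^h_{p,q}v - Dv$. Your single-splice route instead needs the extra estimate $D^h_{p,q}v = Dv + \bigo{h^p}$ at points in $\Omega^+$ near $\Gamma$. This is fine in principle, but your justification ``the choice of $\epsilon$ keeps the stencil inside $\Gamma_\epsilon$'' is not what $\epsilon$ was chosen for: it was chosen so that stencils centered \emph{outside} $\Gamma_\epsilon$ avoid $\Gamma$, not so that stencils centered inside $\Gamma_\epsilon$ stay inside it. You implicitly need $v \in LC^q$ on a slightly wider band (roughly $\Gamma_{2\epsilon}$), which is harmless but should be stated. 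The paper's two-splice version sidesteps this bookkeeping entirely, at the modest cost of verifying $LC^q$ for both $w^+$ and $w^-$---though since $w^+ = w^- + v$ and $v$ is already $LC^q$, this is free.
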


In the above proposition, $H$ is the standard Heaviside function
\[ H(z) =
\begin{cases}
1 & \text{if } z \geq 0\\
0 & \text{if } z < 0,
\end{cases}
\]
and we take as a convention that $\subbar{u}{\Gamma} = u^+$, and likewise for derivatives of $u$, recalling that $u^\pm(\bfx) = \lim_{h\to 0^+} u(\bfx \pm h\bfn)$ for $\bfx \in \Gamma$. In practice, the definitions of $H$ at $z = 0$ and $u$ on $\Gamma$ are immaterial provided that they agree in the sense that $u = u^- + (u^+ - u^-)H(\phi)$. 

We will often refer to $v$ in Proposition \ref{prop:jump-splice} as the \emph{jump extrapolation}. It is important to note that \eqref{eqn:discretization} reduces to \eqref{eqn:d-error} whenever the stencil of $D^h_{p,q}$ does not cross the interface; it is for this reason that \eqref{eqn:discretization} holds in all of $\Omega$, even though $v$ is only defined in a band around $\Gamma$.

To motivate Proposition \ref{prop:jump-splice}, suppose we wish to approximate $(Du)(\bfx_{i,j})$ for some $\bfx_{i,j} \in \Omega^-$ sufficiently close to the interface that the stencil of $(D_{p,q}^hu)_{i,j}$ crosses $\Gamma$ and thus \eqref{eqn:d-error} fails to hold. To recover a $p$th order accurate approximation, we will use the jump conditions \eqref{eqn:jump-conditions} to adjust, or ``splice'', the values of $u$ on the other side of the interface in such a way that \eqref{eqn:d-error} holds for the adjusted $u$.

Define the \emph{outer splice} of $u$ as
\begin{equation}
\label{eqn:outer-splice}
 w^-(\bfx) = u(\bfx) - v(\bfx) H(\phi(\bfx)),
\end{equation}
for $\bfx \in \Gamma_\epsilon$. Here $v \in LC^q(\Gamma_\epsilon)$ is to be determined. Note that $w^- = u$ in $\Omega^-$, and therefore 
\begin{equation}
\label{eqn:outer-splice-equality}
(D w^-)(\bfx) = (D u)(\bfx) \qquad \text{for } \bfx \in \Omega^- \cap \Gamma_\epsilon.
\end{equation}
If we can can choose $v$ in such a way that $w^- \in LC^q(\Gamma_\epsilon)$, then \eqref{eqn:outer-splice-equality} combined with the error estimate \eqref{eqn:d-error} applied to $w^-$ show that
\begin{equation}
\label{eqn:outer-error}
(Du)(\bfx_{i,j}) = (D^h_{p,q} w^-)_{i,j} + \bigo{h^p} \qquad \text{for } \bfx_{i,j} \in \Omega^- \cap \Gamma_\epsilon.
\end{equation}
In essence the term $-v H(\phi)$ in \eqref{eqn:outer-splice} is ``subtracting off the jumps'' in $u$ and thereby allowing us to accurately use the finite difference stencil $D^h_{p,q}$ on $w^-$. For $\bfx_{i,j} \in \Omega^+$, we can analagously define the \emph{inner splice}
\begin{equation}
\label{eqn:inner-splice}
 w^+(\bfx) = u(\bfx) + v(\bfx) (1 - H(\phi(\bfx)))
\end{equation}
for $\bfx \in \Gamma_\epsilon$, where $v$ is the same as in \eqref{eqn:outer-splice}. Here we have $w^+ = u$ in $\Omega^+$, and a similar argument shows that 
\begin{equation}
\label{eqn:inner-error}
(Du)(\bfx_{i,j}) = (D^h_{p,q} w^+)_{i,j} + \bigo{h^p} \qquad \text{for } \bfx_{i,j} \in \Omega^+ \cap \Gamma_\epsilon.
\end{equation}
By appealing to the definitions of $w^\pm$, we can combine \eqref{eqn:outer-error} and \eqref{eqn:inner-error} to establish the main result \eqref{eqn:discretization} of Proposition \ref{prop:jump-splice} in $\Omega \setminus \Gamma$. To see that \eqref{eqn:discretization} also holds for $\bfx_{i,j} \in \Gamma$, recall that $H(0) = 1$, and thus $w^+ = u$ on $\Gamma$. It follows that we can invoke the inner splice \eqref{eqn:inner-error} to approximate $(Du)(\bfx_{i,j})$, and this agrees with \eqref{eqn:discretization}. In fact, \eqref{eqn:discretization} holds for any consistent choice of $\subbar{u}{\Gamma}$ and $H(0)$.

We have thus far assumed that we can find a suitable $v \in LC^q(\Gamma_\epsilon)$ so that $w^\pm \in LC^q(\Gamma_\epsilon)$. The key to constructing such a $v$ lies in the following proposition, which we prove in the appendix. 

\begin{proposition}
\label{prop:jump-lipschitz-iterated}
If $w \in LC^k(\Omega^+, \Omega^-)$ and $\jump{\partial^i_\bfn w} = 0$ for $0 \leq i \leq k$, then there exists a unique $\tilde w \in LC^k(\Omega)$ that extends $w$ in the sense that $\subbar{\tilde w}{\Omega^+\cup\Omega^-} = w$.
\end{proposition}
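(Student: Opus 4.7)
Uniqueness is immediate: since $\Omega^+ \cup \Omega^-$ is dense in $\Omega$, any continuous extension is determined by its values there. For existence, I define
\[ \tilde w(\bfx) = \begin{cases} w(\bfx) & \bfx \in \Omega^+ \cup \Omega^-, \\ w^+(\bfx) & \bfx \in \Gamma, \end{cases} \]
which is well-defined since $\jump{w} = 0$ forces $w^+ = w^-$ on $\Gamma$. Away from $\Gamma$, $\tilde w$ inherits $LC^k$ regularity directly from $w$, so all work concentrates in a tubular neighborhood of $\Gamma$ where $\phi$ is smooth and $\bfn = \nabla \phi$ is well-defined.

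The proof reduces to a single claim: under the hypotheses, every mixed Cartesian partial derivative of $w$ of order at most $k$ has vanishing jump across $\Gamma$, i.e., $\jump{\partial^\alpha w} = 0$ for all $|\alpha| \leq k$. Granted this, the degree-$k$ Taylor polynomials of $w^+$ and $w^-$ agree at each point of $\Gamma$, so each $\partial^\alpha \tilde w$ with $|\alpha| \leq k$ extends continuously across $\Gamma$. Lipschitz continuity of the top-order derivatives on $\Omega$ then follows from a line-segment triangle inequality: for $\bfx \in \Omega^+$ and $\bfy \in \Omega^-$, the segment between them meets $\Gamma$ in a finite set (since $\Gamma$ is smooth and closed), partitioning the segment into subsegments on each of which the derivative is Lipschitz with constant inherited from $w^\pm$.

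To establish the jump-vanishing claim, I would induct on $|\alpha|$ using the local decomposition $\partial_i = n_i \partial_\bfn + T_i$, where $T_i u := (\nabla_s u)_i$ is a tangential derivative and $n_i$ is smooth in the tubular neighborhood. Iterating, $\partial^\alpha w$ for $|\alpha| = m \leq k$ expands as a finite sum of terms $c_\beta(\bfx)\,P_1 P_2 \cdots P_m w$, where each $P_\ell \in \{\partial_\bfn, T_1, \ldots, T_d\}$ and $c_\beta$ is smooth. Using the commutators $[\partial_\bfn, T_i]$ to reorder operators, any such product becomes $c(\bfx)\,T^\mu \partial_\bfn^j w$ with $|\mu|+j \leq m$, modulo strictly lower-order terms. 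Applying \eqref{eqn:nablas-jump} iteratively to the reordered term then yields $\jump{T^\mu \partial_\bfn^j w} = T^\mu \jump{\partial_\bfn^j w} = 0$ whenever $j \leq k$, so every term vanishes.

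The main technical obstacle is the noncommutativity of $\partial_\bfn$ and the tangential operators $T_i$: reordering them generates commutator corrections. These are controlled because $[\partial_\bfn, T_i]$ is a first-order operator whose coefficients involve only derivatives of $\bfn$ (smooth on the tubular neighborhood by smoothness of $\phi$), so each correction has strictly lower differential order than the term it replaces and is absorbed by the outer induction on $|\alpha|$. Combining the jump-vanishing claim with the continuity and Lipschitz arguments above yields $\tilde w \in LC^k(\Omega)$, completing the proof.
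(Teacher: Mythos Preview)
Your approach is sound and close in spirit to the paper's: both rely on the normal--tangential decomposition to propagate the hypotheses $\jump{\partial_\bfn^i w}=0$ to the vanishing of jumps in \emph{all} partial derivatives up to order $k$. The paper organizes this inductively on the order of differentiation, invoking only the identity $\nabla u = \nabla_s u + (\partial_\bfn u)\bfn$ at each step and then reapplying the base-case Lipschitz extension lemma to $\nabla u$, $\nabla^2 u$, and so on. Your route---expanding $\partial^\alpha$ as a product of normal and tangential operators and reordering via commutators---reaches the same conclusion by a more direct combinatorial argument, and avoids separately re-invoking the base case at every level. Both are valid; the paper's version is slightly more elementary, while yours makes the algebraic structure more explicit.

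There is, however, one step you pass over that the paper treats carefully. Knowing that $\jump{\partial^\alpha w}=0$ for all $|\alpha|\le k$ tells you only that the one-sided limits $(\partial^\alpha w)^\pm$ agree on $\Gamma$; it does not by itself establish that $\tilde w$ is \emph{differentiable} at points of $\Gamma$, i.e., that $\partial^\alpha \tilde w$ exists there and equals this common limit. The paper closes this gap with a mean-value-theorem argument along a short segment $\bfgamma(t)=\bfx+t\bfh$ chosen (using the second-order Taylor expansion of $\phi$) so that $\bfgamma((0,1))$ lies entirely on one side of $\Gamma$; this yields $|\tilde w(\bfx+\bfh)-\tilde w(\bfx)-\widetilde{\nabla w}(\bfx)\cdot\bfh|\le K|\bfh|^2$ and hence $\nabla\tilde w(\bfx)=\widetilde{\nabla w}(\bfx)$. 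Your proposal should include this step, or an equivalent one, before concluding $\tilde w\in LC^k(\Omega)$.
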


Thus to ensure that $w^\pm \in LC^q(\Gamma_\epsilon)$, we need to choose $v$ such that $\jump{\partial_\bfn^i w^\pm} = 0$ for $0 \leq i \leq q$. To obtain $\jump{w^-} = 0$, we need
\[\begin{aligned}
0 &= \jump{w^-} \\
  &= \jump{u} - \jump{v H(\phi)} \\
  &= g^0 - \subbar{v}{\Gamma}
\end{aligned}, \]
so that $\subbar{v}{\Gamma} = g^0$. This is also the constraint required to obtain $\jump{w^+} = 0$, confirming our choice of using the same $v$ in the definitions of $w^+$ and $w^-$. Similar calculations show that provided $v$ satisfies \eqref{eqn:v-conditions}, we will have $\jump{\partial_\bfn^i w^\pm} = 0$ for $0 \leq i \leq q$, as needed. 

\subsection{The Jump Extrapolation}
\label{sec:splice:extrapolation}

In the previous section, we derived the necessary conditions \eqref{eqn:v-conditions} that the jump extrapolation $v$ must satisfy for Proposition \ref{prop:jump-splice} to hold, but we have not yet explicitly constructed $v$. We now show a particularly intuitive approach to building the jump extrapolation; in Section \ref{sec:splice:calculation}, we will discuss the bootstrapping approach we use in practice.

We assume from this point forward that $\phi \in LC^{q+1}(\Gamma_\epsilon)$. This requires both that $\Gamma$ be $C^{q+1}$ (see \cite{foote:1984}) and that $\epsilon$ be sufficiently small.\footnote{In 2D, we need $\epsilon < \sup_\Gamma |\kappa|^{-1}$, where $\kappa = \nabla \cdot \bfn$ is the curvature. In 3D, we need $\epsilon < \sup_\Gamma |\kappa_{\rm max}|^{-1}$, where $\kappa_{\rm max}$ is the largest eigenvalue in absolute value of $\nabla \bfn$.} In practice, these restrictions do not pose a problem. Because $\epsilon = \bigo{h}$, refinement of the grid will ensure that $\epsilon$ is sufficiently small. Moreover, numerical experiments in Section \ref{sec:elliptic:results} show that jump splice techniques still achieve their expected order of accuracy with interfaces that are only $C^1$. We will also assume in this section that $g^i \in LC^q(\Gamma)$ for $0 \leq i \leq q$.

Because $v$ need only be defined on $\Gamma_\epsilon$, and thus for $\phi$ close to zero, it is natural to construct $v$ as a truncated Taylor series in $\phi$ using the known jump behavior of $u$. To wit, define
\begin{equation}
\label{eqn:v-definition}
v = \bar g^0 + \bar g^1 \phi + \frac{1}{2}\bar g^2 \phi^2 + \cdots + \frac{1}{q!} \bar g^q \phi^q,
\end{equation} 
where 
\[\bar g^i(\bfx) = g^i(\bfx - \phi(\bfx)\bfn(\bfx))\] 
is the constant normal extension\footnote{The closest point to $\bfx$ on the interface $\Gamma$ is $\operatorname{cp}(\bfx) = \bfx - \phi(\bfx)\bfn(\bfx)$.} of $g^i$ into $\Gamma_\epsilon$. Because $\phi \in LC^{q+1}(\Gamma_\epsilon)$, and thus $\bfn \in LC^q(\Gamma_\epsilon)$, it follows that $\bar g^i \in LC^q(\Gamma_\epsilon)$, and therefore also that $v \in LC^q(\Gamma_\epsilon)$. Moreover, because $\partial_\bfn^k \bar g^i = 0$ for $1 \leq k \leq q$ and because $\subbar{\bar g^i}{\Gamma} = g^i$, it follows that $v$ defined in \eqref{eqn:v-definition} satisfies the necessary conditions \eqref{eqn:v-conditions} from Proposition \ref{prop:jump-splice}. We will often refer to this expression as the \emph{canonical jump extrapolation}.

\begin{figure}[t]
\centering
\begin{tabular}{c@{\hskip 0.0in}c@{\hskip 0.0in}c@{\hskip 0.0in}c}
\includegraphics[width=0.25\linewidth]{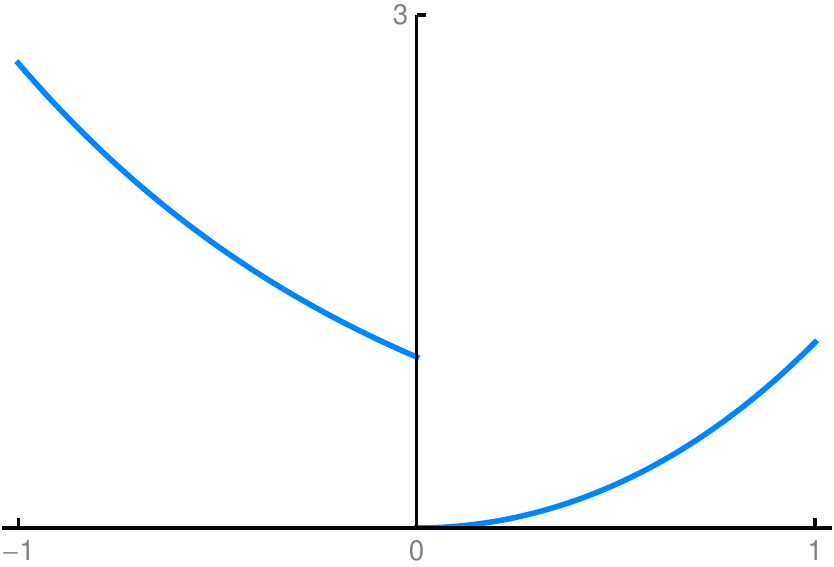} 
&
\includegraphics[width=0.25\linewidth]{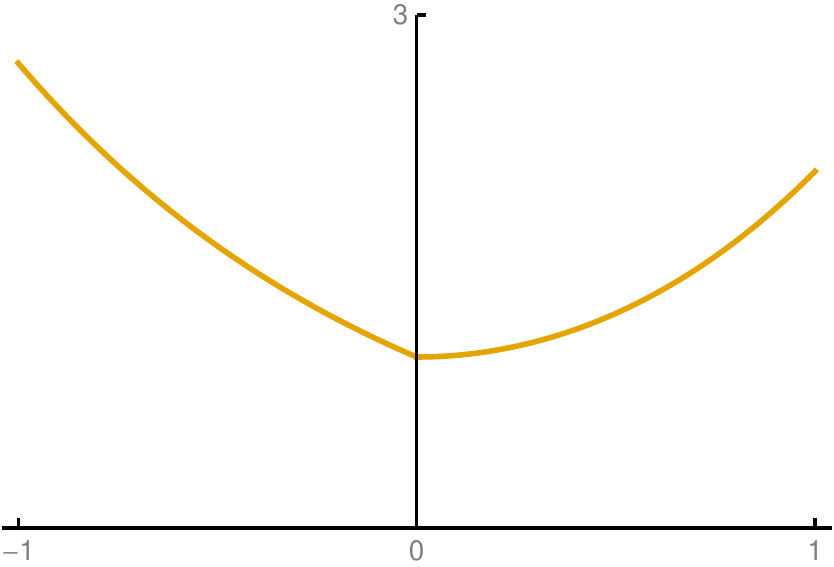} 
&
\includegraphics[width=0.25\linewidth]{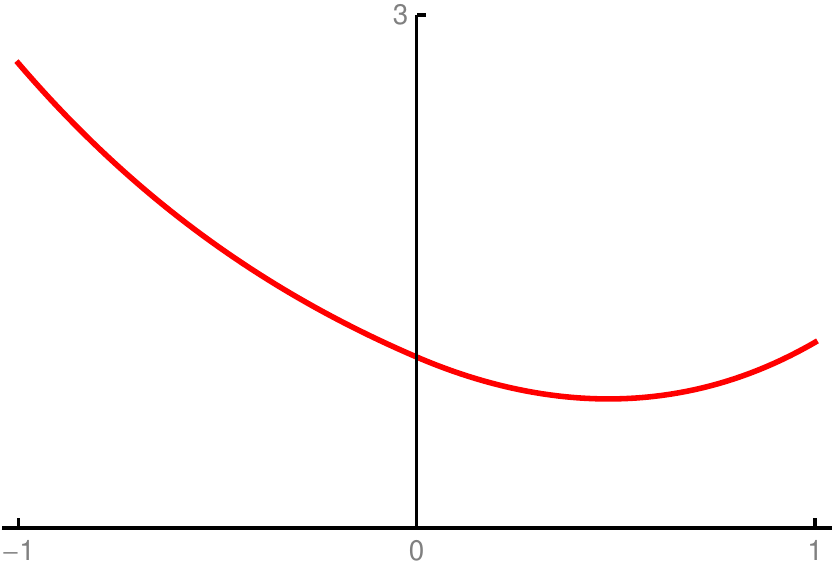} 
&
\includegraphics[width=0.25\linewidth]{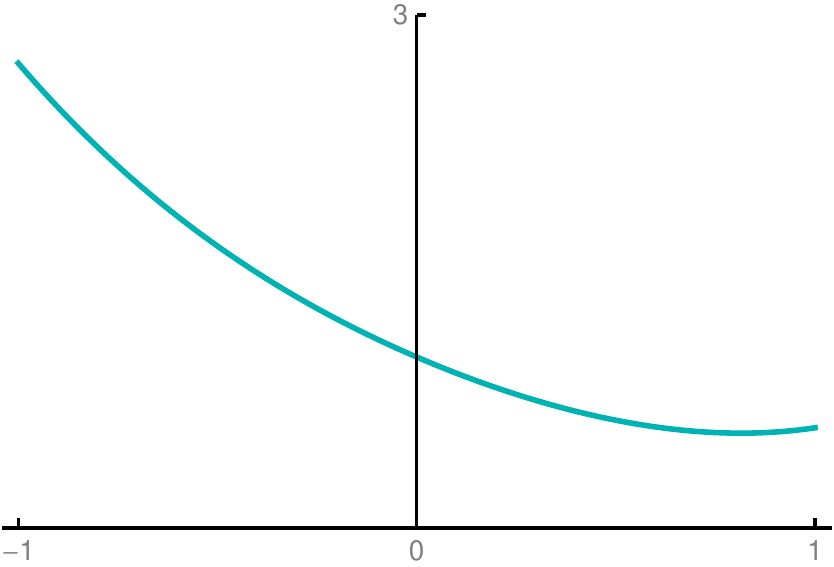} 
\\
$u$ & $u - v^0 H(\phi)$ & $u - v^1 H(\phi)$ & $u - v^2 H(\phi)$\\
\end{tabular}
\caption{Visual demonstration of the jump splice in one dimension. Here $\Gamma = \{0\}$, $\phi = x$, and ${u = e^{-x} + (e^x - 2)H(\phi)}$, with $\jump{u} = -1$ and $\jump{\partial_\bfn^k u} = 1$ for $k \geq 1$. The $v^k$ are constructed from \eqref{eqn:v-definition} by including the first $k$ terms. According to Proposition \ref{prop:jump-splice}, if our goal is to numerically approximate the derivative of $u$ at $x < 0$ close to $\Gamma$, then we should evaluate $\partial^h_x(u - v^2 H(\phi))$.}
\label{fig:js}
\end{figure}

Each term in \eqref{eqn:v-definition} corrects for a corresponding discontinuity in $u$ from \eqref{eqn:jump-conditions} and thereby illustrates how the jump conditions in $u$ give rise to the jump extrapolation $v$. Figure \ref{fig:js} provides a visual example in 1D. In Section \ref{sec:splice:calculation}, we will show that, in most settings, there are more convenient means of constructing the jump extrapolation than \eqref{eqn:v-definition}. The canonical jump extrapolation remains valuable because any other jump extrapolation satisfying the conditions \eqref{eqn:v-conditions} differs by at most $\bigo{\phi^{q+1}}$. This is made precise by the following proposition, which we prove in the appendix.

\begin{proposition}
\label{prop:v-equivalence}
Let $v, \tilde v \in LC^q(\Gamma_\epsilon)$ satisfy the conditions \eqref{eqn:v-conditions}, that is, $\subbar{\partial_\bfn^i v}{\Gamma} = g^i$ for $0\leq i \leq q$. Then $v = \tilde v + \bigo{\phi^{q+1}}$.
\end{proposition}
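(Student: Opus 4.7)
The plan is to set $w = v - \tilde v \in LC^q(\Gamma_\epsilon)$, which satisfies $\subbar{\partial_\bfn^i w}{\Gamma} = 0$ for $0 \leq i \leq q$, and show that $|w(\bfx)| = \bigo{|\phi(\bfx)|^{q+1}}$ uniformly in $\bfx \in \Gamma_\epsilon$. The natural approach is to reduce the statement to a one-dimensional Taylor estimate along each normal line emanating from the interface.

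For fixed $\bfx \in \Gamma_\epsilon$, I would let $\bfx_0 = \bfx - \phi(\bfx)\bfn(\bfx)$ be its closest-point projection onto $\Gamma$ and parameterize the normal line through $\bfx_0$ by $\gamma(t) = \bfx_0 + t\bfn(\bfx_0)$, so that $\bfx = \gamma(\phi(\bfx))$. The crucial property is that because $\phi \in LC^{q+1}(\Gamma_\epsilon)$ is a signed distance function and $\epsilon$ is smaller than the reach of $\Gamma$ (per the footnote in Section \ref{sec:splice:extrapolation}), we have $\bfn(\gamma(t)) = \bfn(\bfx_0)$ along the entire line. Defining $f(t) = w(\gamma(t))$, this constancy allows iteration of the chain rule to yield $f^{(k)}(t) = (\partial_\bfn^k w)(\gamma(t))$ for $0 \leq k \leq q$, so that $f^{(k)}(0) = \subbar{\partial_\bfn^k w}{\Gamma}(\bfx_0) = 0$. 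Moreover $f^{(q)}$ is Lipschitz on $(-\epsilon,\epsilon)$ with constant bounded uniformly in $\bfx_0$, since $\partial_\bfn^q w$ is Lipschitz on $\Gamma_\epsilon$ and $\gamma$ has unit speed.

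Applying Taylor's theorem with Lagrange remainder, the terms of order $0$ through $q-1$ vanish, leaving
\[ f(t) = \frac{t^q}{q!}\bigl(f^{(q)}(\xi) - f^{(q)}(0)\bigr) \]
for some $\xi$ between $0$ and $t$. The Lipschitz bound $|f^{(q)}(\xi) - f^{(q)}(0)| \leq L|t|$ then yields $|f(t)| \leq (L/q!)|t|^{q+1}$, and setting $t = \phi(\bfx)$ gives $|w(\bfx)| \leq (L/q!)|\phi(\bfx)|^{q+1}$, which is the desired conclusion. I expect the only nontrivial step to be verifying the chain-rule identity $f^{(k)}(t) = (\partial_\bfn^k w)(\gamma(t))$: without constancy of $\bfn$ along $\gamma$, higher iterates of the chain rule would produce unwanted derivatives of $\bfn$ contaminating the identification of $f^{(k)}(0)$ with the prescribed normal-derivative jumps. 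This is precisely where the signed-distance property $|\nabla\phi| = 1$ and the regularity assumption $\phi \in LC^{q+1}(\Gamma_\epsilon)$ pay off.
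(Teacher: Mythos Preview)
Your proposal is correct and is essentially identical to the paper's own proof: the paper also sets $\zeta = v - \tilde v$, projects $\bfx$ to its closest point $\bfy \in \Gamma$, Taylor-expands along the normal line using $\bfn(\bfx) = \bfn(\bfy)$, and then uses the Lipschitz bound on $\partial_\bfn^q \zeta$ together with $\partial_\bfn^q \zeta(\bfy) = 0$ to extract the extra factor of $|\phi(\bfx)|$. Your write-up is slightly more explicit about the chain-rule identity $f^{(k)}(t) = (\partial_\bfn^k w)(\gamma(t))$, but the argument is the same.
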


Recall that, in $\Gamma_\epsilon$, $\bigo{\phi^{q+1}} = \bigo{h^{q+1}}$. It follows that any result that holds for the canonical jump extrapolation will hold, up to $\bigo{h^{q+1}}$ for all other jump extrapolations as well.

\subsection{Accuracy Considerations}
\label{sec:splice:accuracy}

The discussion up until now has assumed that the functions $\phi$ and $g^i$ for $0\leq i\leq q$ are known precisely and that $v$ is exactly computed as described in the previous section. In practice, there will be discretization error in all of these quantities, and the formulation of the jump splice puts limits on the maximum error such that Proposition \ref{prop:jump-splice} will still hold. This is made precise by the following result.

\begin{proposition}
\label{prop:accuracy}
If $v$ satisfies the conditions \eqref{eqn:v-conditions} and $\hat v = v + \bigo{h^{q+1}}$ is an approximation of $v$, then the main error estimate \eqref{eqn:discretization} still holds with $v$ replaced by $\hat v$.
\end{proposition}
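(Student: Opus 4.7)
The plan is to reduce the statement to an estimate on the error introduced by replacing $v$ by $\hat v$ in the correction term, and then show this error is dominated by the fact that the ``mismatch'' between $D^h_{p,q}(\cdot H(\phi))$ and $D^h_{p,q}(\cdot) H(\phi)$ only sees stencil points on the opposite side of $\Gamma$ from $\bfx_{i,j}$, so it picks up only the perturbation $e = \hat v - v$ itself (scaled by stencil coefficients), not its derivatives.

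First, by linearity of $D^h_{p,q}$, the right-hand side of \eqref{eqn:discretization} with $\hat v$ in place of $v$ differs from the right-hand side with $v$ by exactly
\begin{equation*}
E_{i,j} := \bigl(D^h_{p,q}(e H(\phi))\bigr)_{i,j} - \bigl(D^h_{p,q} e\bigr)_{i,j}\, H(\phi(\bfx_{i,j})),
\end{equation*}
where $e = v - \hat v$ satisfies $\|e\|_{\infty,\Gamma_\epsilon} = \bigo{h^{q+1}}$. Hence it suffices to show $E_{i,j} = \bigo{h^p}$ uniformly in $(i,j)$.

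Writing $D^h_{p,q}$ in stencil form $(D^h_{p,q} w)_{i,j} = \sum_\alpha c_\alpha(h)\, w(\bfx_\alpha)$, where the sum is over a finite set of stencil neighbors of $\bfx_{i,j}$, the two terms above combine cleanly into
\begin{equation*}
E_{i,j} = \sum_\alpha c_\alpha(h)\, e(\bfx_\alpha)\, \bigl[H(\phi(\bfx_\alpha)) - H(\phi(\bfx_{i,j}))\bigr].
\end{equation*}
The bracketed Heaviside difference vanishes whenever $\bfx_\alpha$ and $\bfx_{i,j}$ lie on the same side of $\Gamma$, and is bounded by $1$ in magnitude otherwise. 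The key observation is that, for a finite difference stencil that approximates a differential operator $D$ of order $r$ with accuracy order $p$ and smoothness requirement $q$, one has $c_\alpha(h) = \bigo{h^{-r}}$ and the relation $q \geq p + r - 1$ holds (this is the usual Taylor-matching requirement; it is saturated by the examples given in the paper, namely the $5$-point Laplacian with $(r,p,q) = (2,2,3)$ and the centered gradient with $(r,p,q) = (1,2,2)$). Combining these with $|e(\bfx_\alpha)| = \bigo{h^{q+1}}$ and the fact that the stencil has $\bigo{1}$ points yields
\begin{equation*}
|E_{i,j}| \leq C\, h^{-r}\, h^{q+1} = \bigo{h^{q+1-r}} = \bigo{h^p},
\end{equation*}
which is exactly what is needed. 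Substituting into \eqref{eqn:discretization} for $v$ and absorbing $E_{i,j}$ into the $\bigo{h^p}$ remainder yields \eqref{eqn:discretization} with $\hat v$ in place of $v$.

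The only non-routine point is the rearrangement that turns $D^h_{p,q}(e H(\phi)) - (D^h_{p,q} e)\, H(\phi)$ into a sum of terms involving only the values $e(\bfx_\alpha)$ (with no discrete derivatives of $e$ appearing): this is what allows the full $h^{q+1}$ smallness of $e$ to be used, rather than the degraded bound $\bigo{h^{q+1-r}}$ that one would get from naively applying $D^h_{p,q}$ separately to $e$ and to $e H(\phi)$. Once this cancellation is in hand, the rest is a direct counting of stencil coefficients.
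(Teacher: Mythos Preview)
Your proof is correct and rests on the same mechanism as the paper's: the stencil coefficients satisfy $c_\alpha(h)=\bigo{h^{-r}}$, the perturbation satisfies $|e|=\bigo{h^{q+1}}$, and $q+1-r=p$. The paper's argument is simply the terse version of yours: it observes directly that $D^h_{p,q}\hat v = D^h_{p,q} v + \bigo{h^{q-r+1}}$ (and likewise for the $H(\phi)$-weighted term) because the stencil divides by $h^r$, and since $q-r+1=p$ this is already enough. Your rearrangement into $\sum_\alpha c_\alpha(h)\,e(\bfx_\alpha)\,[H(\phi(\bfx_\alpha))-H(\phi(\bfx_{i,j}))]$ is valid but not actually needed: bounding $D^h_{p,q}(eH(\phi))$ and $(D^h_{p,q}e)H(\phi)$ separately already yields $\bigo{h^{q+1-r}}=\bigo{h^p}$ for each, so the ``degraded bound'' you caution against is in fact the target bound, and no cancellation is required.
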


To see why this is true, note that if $D_{p,q}^h$ is the finite difference discretization of a linear differential operator $D$ that contains highest derivatives of order $r$, then the relation $q = p + r - 1$ will hold by a Taylor series argument. Here $p$ and $q$ are as described in Section \ref{sec:splice:splice}. We can also write
\[ D^h_{p,q} \hat v = D^h_{p,q} v + \bigo{h^{q-r+1}}, \]
as a finite difference stencil approximating a differential operator with highest derivatives of order $r$ will involve division by $h^r$. Since $q - r + 1 = p$, the result follows. Note that the smoothness of the error term in $\hat v$ is immaterial, since we evaluate $D^h_{p,q}(v H(\phi))$ in \eqref{eqn:discretization}, which is always discontinuous at the interface.

Proposition \ref{prop:accuracy} imposes straightforward criteria on the accuracy of all other quantities. In particular, by appealing to the definition of $v$ in \eqref{eqn:v-definition}, it is clear that if $\hat \phi$ is an approximation of the signed distance function, we need 
\begin{equation}
\label{eqn:accuracy-phi}
\hat \phi = \phi + \bigo{h^{q+1}},
\end{equation}
From the same equation, we can see that if $\hat g^i$ is an approximation of $g^i$, then we need
\begin{equation}
\label{eqn:accuracy-g}
\hat g^i = g^i + \bigo{h^{q-i+1}},
\end{equation}
since $\overline{\hat g}^i \phi^i = \bar g^i \phi^i + \bigo{h^{q+1}}$. 

Thus provided that an approximation $\hat v$ of $v$ is constructed in such a way that \eqref{eqn:accuracy-phi} and \eqref{eqn:accuracy-g} are satisfied, the key error estimate \eqref{eqn:discretization} in Proposition \ref{prop:jump-splice} will still hold.

\subsection{Practical Calculation}
\label{sec:splice:calculation}

The construction of $v$ defined by \eqref{eqn:v-definition} is very important for intuition, but can be quite cumbersome in practice. Indeed, in most applications with a Cartesian grid and an implicitly defined interface, the jump conditions \eqref{eqn:jump-conditions} are not specified directly on $\Gamma$, but rather the $g^i$ are defined in all of $\Gamma_\epsilon$ such that they specify the right behavior on the interface, that is $\subbar{g^i}{\Gamma} = \jump{\partial_\bfn^i u}$. Moreover, in many applications, including those discussed in the rest of this paper, it is far more convenient to work with $\jump{\Delta u}$ and $\jump{\partial_\bfn\Delta u}$ than with $\jump{\partial_\bfn^2 u}$ and $\jump{\partial_\bfn^3 u}$. We now describe an approach to building $v$ that takes these considerations as a starting point, and that is significantly eaiser to implement in practice.

For the remainder of this section, we will restrict to the case that $q \leq 3$ to ease notation, but all results can be extended to arbitrary $q$. This is not too restrictive, as $q \leq 3$ is sufficient to achieve up to second-order accuracy with up to second-order differential operators. In particular, we will now assume that we have $g^0, g^1, g^\Delta, g^{\partial_\bfn\Delta} \in LC^q(\Gamma_\epsilon)$\footnote{Technically, we only need $g^i \in LC^q(\Gamma_\epsilon \setminus \Gamma)$ along with $g^i \in LC^{q-i}(\Gamma)$, in agreement with the original smoothness required for the $g^i$.} such that
\begin{equation}
\label{eqn:modified-jump-conditions}
\left\{ 
\begin{aligned}
\subbar{g^0}{\Gamma} &= \jump{u} \\
\subbar{g^1}{\Gamma} &= \jump{\partial_\bfn u} \\
\subbar{g^\Delta}{\Gamma} &= \jump{\Delta u} \\
 \subbar{g^{\partial_\bfn\Delta}}{\Gamma} &= \jump{\partial_\bfn \Delta u},
\end{aligned}
\right.
\end{equation}
where we use only the first $q+1$ of these conditions for $q < 3$. The key to constructing $v$ given \eqref{eqn:modified-jump-conditions} is the following proposition, which we prove in the appendix. 

\begin{proposition}
\label{prop:modified-v-conditions}
If $v \in LC^q(\Gamma_\epsilon)$, for $q \leq 3$, satisfies the first $q+1$ conditions
\begin{equation}
\label{eqn:modified-v-conditions}
\left\{ 
\begin{aligned}
\subbar{v}{\Gamma} & = g^0\\
\subbar{\partial_\bfn v}{\Gamma} & = g^1 \\
\subbar{\Delta v}{\Gamma} & = g^\Delta \\
\subbar{\partial_\bfn \Delta v}{\Gamma} & = g^{\partial_\bfn\Delta},
\end{aligned}
\right.
\end{equation}
then $v$ also satisfies \eqref{eqn:v-conditions}, that is, $\subbar{\partial_\bfn^i v}{\Gamma} = g^i$ for $0 \leq i \leq q$.
\end{proposition}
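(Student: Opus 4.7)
The plan is to verify $\subbar{\partial_\bfn^i v}{\Gamma} = \jump{\partial_\bfn^i u}$ for each $i \in \{0,1,2,3\}$ by expressing both sides as the same function of the boundary data $g^0, g^1, g^\Delta, g^{\partial_\bfn\Delta}$ on $\Gamma$ and of the geometry encoded in $\phi$. The cases $i = 0$ and $i = 1$ are immediate, since the first two lines of \eqref{eqn:modified-v-conditions} and \eqref{eqn:v-conditions} coincide.

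The key algebraic input is the pointwise identity
\[
\Delta w = \partial_\bfn^2 w + \kappa\, \partial_\bfn w + \Delta_s w, \qquad \kappa = \nabla\cdot\bfn,
\]
valid throughout $\Gamma_\epsilon$ for any $LC^2$ function $w$; it follows from $\bfn = \nabla\phi$ being a unit field (so $\partial_\bfn \bfn = 0$), which forces the mixed term $\bfn\cdot\nabla(\nabla_s w)\cdot\bfn$ appearing in $\nabla_s\cdot\nabla_s w$ to vanish. For $i = 2$, I would apply this identity to $v$ and restrict to $\Gamma$; since $\Delta_s$ is purely tangential, $\subbar{\Delta_s v}{\Gamma}$ is determined by $\subbar{v}{\Gamma} = g^0$, giving
\[
\subbar{\partial_\bfn^2 v}{\Gamma} = g^\Delta - \kappa\, g^1 - \Delta_s g^0.
\]
Applying the same identity separately to $u^+$ and $u^-$, subtracting, and invoking \eqref{eqn:deltas-jump} produces the matching formula $\jump{\partial_\bfn^2 u} = \jump{\Delta u} - \kappa\,\jump{\partial_\bfn u} - \Delta_s\jump{u}$, which agrees with the previous display by \eqref{eqn:modified-jump-conditions}.

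For $i = 3$, I would differentiate the decomposition in the normal direction,
\[
\partial_\bfn \Delta w = \partial_\bfn^3 w + (\partial_\bfn\kappa)\,\partial_\bfn w + \kappa\, \partial_\bfn^2 w + \partial_\bfn \Delta_s w,
\]
and apply it both to $v$ and, on each side, to $u$. Using the already-established match for $i = 2$, the problem reduces to verifying $\subbar{\partial_\bfn \Delta_s v}{\Gamma} = \jump{\partial_\bfn \Delta_s u}$. This is the main technical obstacle, because $\partial_\bfn \Delta_s$ is neither purely tangential nor purely normal, so neither the tangentiality argument of $i = 2$ nor \eqref{eqn:deltas-jump} applies directly.

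To overcome this hurdle I would work in geodesic normal coordinates $(s, n)$ near a point of $\Gamma$, with $n = \phi$ and $s$ parametrizing the interface (arc length in 2D; orthonormal tangential coordinates plus the Weingarten map in 3D). In these coordinates the induced metric on the level set $\{\phi = n\}$ is explicit (for example $(1 - n\kappa(s))^2\, ds^2$ in 2D), so $\Delta_s w$ off of $\Gamma$ has an explicit form. Differentiating that form in $n$ and evaluating at $n = 0$ expresses $\subbar{\partial_\bfn \Delta_s w}{\Gamma}$ as a linear combination of tangential derivatives of $w|_\Gamma$ and $(\partial_\bfn w)|_\Gamma$, with coefficients built only from $\kappa$ and its tangential derivatives. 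Applied to $v$ this produces a functional of $g^0$ and $g^1$ on $\Gamma$; applied separately to $u^\pm$ and subtracted, it produces the same functional of $\jump{u} = g^0$ and $\jump{\partial_\bfn u} = g^1$. Equality of the two closes the argument.
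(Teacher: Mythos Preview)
Your argument is correct. For $i\le 2$ it matches the paper exactly, using the decomposition $\Delta w = \partial_\bfn^2 w + \kappa\,\partial_\bfn w + \Delta_s w$ on both $v$ and $u$.

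For $i=3$ you take a genuinely different route. You differentiate the decomposition in the normal direction and then isolate the mixed term $\partial_\bfn\Delta_s w$, which you handle by passing to geodesic normal coordinates and verifying that $\subbar{\partial_\bfn\Delta_s w}{\Gamma}$ is a fixed differential expression in $\subbar{w}{\Gamma}$ and $\subbar{\partial_\bfn w}{\Gamma}$ alone. This works, but it is coordinate-dependent and requires separate bookkeeping in 2D versus 3D. The paper instead uses the coordinate-free commutator identity
\[
\partial_\bfn\Delta w \;=\; \Delta(\partial_\bfn w) \;-\; 2\Tr\bigl(\nabla\nabla w\cdot\nabla\bfn\bigr) \;-\; \nabla w\cdot\Delta\bfn,
\]
re-expands $\Delta(\partial_\bfn w)$ with the $i=2$ decomposition, and then invokes a cleaner general principle: since $\subbar{\partial_\bfn^i v}{\Gamma}=\jump{\partial_\bfn^i u}$ is already known for $i=0,1,2$, \emph{every} derivative of order at most two satisfies $\subbar{D v}{\Gamma}=\jump{D u}$, so in particular $\subbar{\nabla\nabla v}{\Gamma}=\jump{\nabla\nabla u}$ and $\subbar{\nabla v}{\Gamma}=\jump{\nabla u}$. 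This dispatches the extra terms without any coordinate computation and is dimension-independent. Your local-coordinate calculation effectively re-derives a special case of that principle; the paper's version is shorter and scales more easily to higher $q$.
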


Thus if we can construct $v$ to satisfy \eqref{eqn:modified-v-conditions}, then $v$ will also satisfy the original conditions \eqref{eqn:v-conditions} necessary for Proposition \ref{prop:jump-splice} to hold. We do this by building up $v$ through a simple, and easy to implement, recursive relationship. 

We begin by defining $v^0 = a^0$ where $a^0 = g^0$, recalling that $g^0$ is now defined throughout $\Gamma_\epsilon$, and then write
\begin{equation}
\label{eqn:v-recursion}
v^k = v^{k-1} + \frac{1}{k!} a^k \phi^k,  
\end{equation}
for $1 \leq i \leq 3$, where we have
\begin{equation}
\label{eqn:a-definition}
\begin{aligned}
a^1 &= g^1 - \partial_\bfn v^0 \\
a^2 &= g^\Delta - (\Delta v^1 -  (\Delta a^1)\phi)\\
a^3 &= g^{\partial_\bfn\Delta} - \partial_\bfn\Delta v^2.
\end{aligned}
\end{equation}
Here the $a^i$ are derived by successively enforcing the constraints in \eqref{eqn:modified-v-conditions} and discarding $\bigo{\phi}$ terms. For example, to derive $a^1$, we apply $\partial_\bfn$ to both sides of \eqref{eqn:v-recursion} for $k = 1$, discard the term $(\partial_\bfn a^1) \phi$ and solve for $a^1$, obtaining the first equation in \eqref{eqn:a-definition}. We repeat this for $k = 2$ and $k = 3$ by applying $\Delta$ and $\partial_\bfn \Delta$, respectively. 

For $k = 2$, this process yields $a^2 = g^\Delta - \Delta v^1$, but we make the modification indicated in \eqref{eqn:a-definition}. This follows from expanding
\[\Delta v^1 = \Delta g^0 + \kappa (g^1 - \partial_\bfn g^0) + 2(\partial_\bfn g^1 - \partial_\bfn^2 g^0) + \Delta(g^1 - \partial_\bfn g^0)\phi,\]
and observing that the last term $\Delta(g^1 - \partial_\bfn g^0) \phi = (\Delta a^1) \phi$ does not contribute toward satisfying the condition $\subbar{\Delta v^2}{\Gamma} = g^\Delta$, and thus can be removed. This change is equivalent in terms of convergence behavior, but by reducing the composition of finite difference operators in the construction of $a^2$, we achieve significantly improved numerical results. A similar procedure can be employed on the term $\partial_\bfn\Delta v^2$ in $a^3$, but without a similar improvement in numerical error for $q \leq 3$.

Error analysis for this construction of $v$ is somewhat more subtle, because the $g^i$ are now arbitrary in $\Gamma \setminus \Gamma_\epsilon$. Provided we construct $\hat v$ in accordance with \eqref{eqn:v-recursion} from an approximation $\hat g^i$ such that
\begin{equation}
\label{eqn:modified-accuracy-g}
\subbar{\hat g^i}{\Gamma} = g^i + \bigo{h^{q-i+1}},
\end{equation}
along with the same constraint \eqref{eqn:accuracy-phi} as before on $\hat \phi$, the main error estimate \eqref{eqn:discretization} will still hold. (Here $i = 2$ and $i = 3$ correspond to $g^\Delta$ and $g^{\partial_\bfn \Delta}$.) To see this, we can write $\hat g^i = \bar g^i + \nu^i \phi + \bigo{h^{q-i+1}}$ for some $\nu^i \in LC^{q}(\Gamma_\epsilon)$ and follow the construction in \eqref{eqn:v-recursion} and \eqref{eqn:a-definition}, winding up with $\hat v^i = v^i + \bigo{h^{q+1}} + \bigo{\phi^{i+1}}$. Since $\phi = \bigo{h}$ in $\Gamma_\epsilon$, Proposition \ref{prop:accuracy} shows \eqref{eqn:discretization} holds. 

A remarkable consequence of constructing $v$ as described above is that we arrive at a valid jump extrapolation even if $\phi$ is not a signed distance function. In fact, provided $\phi$ is a reasonably smooth function with zero level set $\Gamma$, and provided $|\nabla \phi|$ is both bounded from above and bounded away from zero in $\Gamma_\epsilon$, the procedure in \eqref{eqn:v-recursion} and \eqref{eqn:a-definition} will construct a $v$ that satisfies the preconditions for Proposition \ref{prop:jump-splice}. However, when $\phi$ differs significantly from a signed distance function, numerical error increases substantially. As a result, in this paper we will always reconstruct level set functions into corresponding signed distance functions.

\subsection{Implementation}
\label{sec:splice:implementation}

In this section, we only consider finite difference operators $D_{p,q}^h$ with $q \leq 3$, though extension to arbitrary $q$ is straightforward. We will assume that $\Omega$ is a rectangular domain with a regular $n \times n$ Cartesian grid with $n = 1/h$, but as noted before, extension to 3D is as simple as changing the finite difference operator.

In what follows, $\Delta^h$ is the standard 5-point Laplacian, $\Delta^h_4$ is the 9-point, fourth-order Laplacian, defined by
\begin{equation}
\label{eqn:laplacian-4}
(\Delta^h_4 u)_{i,j} = \frac{1}{12h^2}\left(-u_{i+2,j} + 16 u_{i+1,j} + 16 u_{i-1,j} - u_{i-2,j} - u_{i,j+2} + 16 u_{i,j+1} + 16 u_{i,j-1} - u_{i,j-2} - 60 u_{i,j}\right),
\end{equation} 
$\nabla^h$ is the 4-point, second-order centered difference gradient, and $\nabla^h_4$ is the 8-point, fourth-order centered difference gradient.

Assume that we are given a discrete approximation of the signed distance function, $\hat \phi_{i,j}$, as well as discrete approximations of the first $(q+1)$ of the jump conditions, $\hat g^0_{i,j}, \hat g^1_{i,j}, \hat g^\Delta_{i,j}$, and $\hat g^{\partial_\bfn\Delta}_{i,j}$, all defined in a band around the interface, as developed in \cite{adalsteinsson:1995}. We further assume that these quantities satisfy the accuracy requirements given in \eqref{eqn:accuracy-phi} and \eqref{eqn:modified-accuracy-g}. To construct $\hat v$, we follow the lead of Section \ref{sec:splice:calculation} and define
\begin{equation}
\label{eqn:v-construction}
\begin{aligned}
\hat v^0 &= \hat g^0, \\
\hat a^1 &= \hat g^1 - (\nabla^h_4 \hat v^0) \cdot (\nabla^h_4 \hat \phi), \\
\hat v^1 &= \hat v^0 + \hat a^1 \hat \phi, \\
\hat a^2 &= \hat g^\Delta - \Delta^h_4 \hat v^1 + (\Delta^h_4 \hat a^1)\hat \phi, \\
\hat v^2 &= \hat v^1 + \frac{1}{2}\hat a^2 \hat \phi^2, \\
\hat a^3 &= \hat g^{\partial_\bfn\Delta} - \nabla^h_4(\Delta^h_4 \hat v^2) \cdot (\nabla^h_4 \hat \phi), \\
\hat v^3 &= \hat v^2 + \frac{1}{6} \hat a^3 \hat\phi^3,
\end{aligned}
\end{equation}
and we can then take $\hat v = \hat v^q$ as our jump extrapolation. See Algorithm \ref{alg:jump-extrapolation} for a summary of the implementation.

\begin{algorithm}
  \caption{Construct the jump extrapolation $\hat v$.}
  \begin{itemize}
  \setlength\itemsep{0.0em}
  \item $q$ is the smoothness required (in the sense of $LC^q$) and $0 \leq q \leq 3$.
  \item $s$ is the width of the finite difference stencil $D^h_{p,q}$. Width is defined as the maximum distance between where the stencil is evaluated and any other point in the stencil.
  \item $h$ is the uniform grid spacing.
  \item $b$ is the base band width. $b = s + 2qh$ if $q\leq 2$ and $b = s + 8h$ if $q = 3$.
  \item $\hat \phi_{i,j} = \phi_{i,j} + \bigo{h^{q+1}}$, discretized signed distance function in band of width $b$.
  \item $\hat g^0_{i,j} = g^0_{i,j} + \bigo{h^{q+1}}$, discretized $\jump{u}$ in band of width $b$.
  \item $\hat g^1_{i,j} = g^1_{i,j} + \bigo{h^{q}}$, discretized $\jump{\partial_\bfn u}$ in band of width $b - 2h$, if $q \geq 1$.
  \item $\hat g^\Delta_{i,j} = g^\Delta_{i,j} + \bigo{h^{q-1}}$, discretized $\jump{\Delta u}$ in band of width $b - 4h$, if $q \geq 2$.
  \item $\hat g^{\partial_\bfn\Delta}_{i,j} = g^{\partial_\bfn\Delta}_{i,j} + \bigo{h^{q-2}}$, discretized $\jump{\partial_\bfn\Delta u}$ in band of width $b - 8h$, if $q = 3$.
  \end{itemize}
  \begin{algorithmic}[1]
  \Function{JumpExtrapolation}{$\hat \phi$, $\hat g^0$, $\hat g^1$, $\hat g^\Delta$, $\hat g^{\partial_\bfn\Delta}$, $q$, $h$, $b$}
      \For{$i,j = 1,\ldots, n$ such that $|\hat \phi_{i,j}| < b$} 
      \Comment{form $\hat v^0$ in band of width $b$}
        \Let{$\hat v_{i,j}$}{$\hat g^0_{i,j}$}
      \EndFor
      \If{$q = 0$}
        \State \Return{$\hat v$}
      \EndIf
      \For{$i,j = 1,\ldots, n$ such that $|\hat \phi_{i,j}| < b - 2h$} 
      \Comment{form $O(h^4)$ accurate $\hat \bfn$ in band of width $b - 2h$}
        \Let{$\hat n_{i,j}$}{$(\nabla^h_4 \hat \phi)_{i,j}$}
      \EndFor
      \For{$i,j = 1,\ldots, n$ such that $|\hat \phi_{i,j}| < b - 2h$} 
      \Comment{form $\hat v^1$ in band of width $b - 2h$}
        \Let{$\hat a^1_{i,j}$}{$\hat g^1_{i,j} - (\nabla^h_4 \hat v)_{i,j} \cdot \hat n_{i,j}$}
        \Let{$\hat v_{i,j}$}{$\hat v_{i,j} + \hat a^1_{i,j} \hat \phi_{i,j}$}
      \EndFor
      \If{$q = 1$}
        \State \Return{$\hat v$}
      \EndIf
      \For{$i,j = 1,\ldots, n$ such that $|\hat \phi_{i,j}| < b - 4h$} 
      \Comment{form $\hat v^2$ in band of width $b - 4h$}
        \Let{$\hat a^2_{i,j}$}{$\hat g^\Delta_{i,j} - (\Delta^h_4 \hat v)_{i,j} + (\Delta^h_4 \hat a^1)_{i,j}\hat \phi_{i,j}$}
        \Let{$\hat v_{i,j}$}{$\hat v_{i,j} + \frac{1}{2}\hat a^2_{i,j} \hat \phi_{i,j}^2$}
      \EndFor
      \If{$q = 2$}
        \State \Return{$\hat v$}
      \EndIf
      \For{$i,j = 1,\ldots, n$ such that $|\hat \phi_{i,j}| < b - 8h$} 
      \Comment{form $\hat v^3$ in band of width $b - 8h$}
        \Let{$\hat a^3_{i,j}$}{$\hat g^{\partial_\bfn\Delta}_{i,j} - (\nabla^h_4(\Delta^h_4 \hat v))_{i,j} \cdot \hat n_{i,j}$}
        \Let{$\hat v_{i,j}$}{$\hat v_{i,j} + \frac{1}{6}\hat a^3_{i,j} \hat \phi_{i,j}^3$}
      \EndFor
      \State \Return{$\hat v$}
  \EndFunction
  \end{algorithmic}
  \label{alg:jump-extrapolation}
\end{algorithm}

It should be noted that for $q < 3$ we can replace all fourth-order finite difference operators above with their second-order counterparts and still satisfy the accuracy criterion in Proposition \ref{prop:accuracy}, and for $q = 3$ we can do the same in all but the calculation of $\hat a^1$. However, we still see better numerical results with fourth-order stencils even when $q < 3$. 

With $\hat v$ in hand, evaluating $Du$ is as simple as invoking Proposition \ref{prop:jump-splice}, and we have
\[Du = D^h_{p,q}u - D^h_{p,q}(\hat v H(\hat \phi)) + (D^h_{p,q} \hat v) H(\hat \phi)  + \bigo{h^p}, \]
as desired.

\subsection{Results}
\label{sec:splice:results}

\begin{enumerate}[label=\bfseries Example \thesection.\arabic*.\ , align=left, leftmargin=0cm, itemindent=0cm, labelwidth=0cm, labelsep=0cm, ref=\thesection.\arabic*]
\item \label{ex:splice-1} We investigate the error in evaluating $\Delta u$ for
\begin{equation}
\label{eqn:splice-example1-u}
u(x, y) = (e^x y^2) H(\phi), 
\end{equation}
where the interface $\Gamma$ is an ellipse centered at $(0,0)$ with semi-principal axes $\mathbf{R} = (0.7, 0.3)$. Because there is no closed form for the signed distance function of an ellipse, we must construct $\phi$ numerically. In this paper, we use fifth-order accurate closest point techniques from Saye \cite{saye:2014}. Other approaches to computing the signed distance function can be found, for example, in \cite{adalsteinsson:1999, chopp:2006, peng:1999}. Note that, in this example, we have
\[ \left\{
\begin{aligned} 
g^0 &= e^x y^2 \\
g^1 &= (e^x y^2, 2 e^x y) \cdot \bfn \\
g^\Delta &= e^x (2 + y^2) \\
g^{\partial_\bfn\Delta} &= (e^x(2 + y^2), 2e^x y) \cdot \bfn,
\end{aligned}
\right. \]
where we compute $\bfn = \nabla_4^h \phi$. Convergence results are presented in Table \ref{tab:splice-example1}.

\begin{table}[ht]
\centering
\begin{tabular}{rcccc}
  \toprule
$n$ & $L^\infty(\Omega)$ & Rate & $L^2(\Omega)$ & Rate \\
  \midrule
64   & 7.760\e{-6} &      & 2.222\e{-6} &      \\
128  & 2.036\e{-6} & 1.9 & 5.560\e{-7} & 2.0 \\
256  & 5.129\e{-7} & 2.0 & 1.391\e{-7} & 2.0 \\
512  & 1.282\e{-7} & 2.0 & 3.494\e{-8} & 2.0 \\
1024 & 3.214\e{-8} & 2.0 & 8.730\e{-9} & 2.0 \\
2048 & 8.179\e{-9} & 2.0 & 2.182\e{-9} & 2.0 \\
  \bottomrule
\end{tabular}
\caption{Convergence results for Example \ref{ex:splice-1}. Errors are for approximating $\Delta u$ with jump splice techniques, where $u$ is given by \eqref{eqn:splice-example1-u}.}
\label{tab:splice-example1}
\end{table}
\end{enumerate}

\section{Elliptic Problems}
\label{sec:elliptic}

Having developed jump splice methodology, we now have the tools to solve elliptic problems of the form \eqref{eqn:jump-elliptic} when $\jump{\beta} = 0$. The finite difference error result in Proposition \ref{prop:jump-splice} is not only useful to approximate derivatives, but can also be readily used to invert elliptic operators, as we now show.

\subsection{Poisson Equation}
\label{sec:elliptic:poisson}

We begin with the Poisson equation given by
\begin{equation} 
\label{eqn:jump-poisson}
\left\{ \setlength\arraycolsep{2pt} \begin{array}{rll}
\Delta u &= f & \text{on } \Omega \setminus \Gamma \\
u &= h &\text{on } \partial \Omega \\
\jump{u} &= g^0 & \text{across } \Gamma \\
\jump{\partial_\bfn u} &= g^1 & \text{across } \Gamma.
\end{array} \right.
\end{equation}
Here we will assume that $g^0 \in LC^3(\Gamma)$, $g^1 \in LC^2(\Gamma)$, and $f \in LC^1(\Omega^+, \Omega^-)$. In most applications, we are also given the jumps $\jump{f}$ and $\jump{\partial_\bfn f}$. Provided this is so, \eqref{eqn:jump-poisson} immediately implies that we have
\begin{equation}
\label{eqn:jump-poisson-extra-jumps}
\begin{aligned}
g^\Delta &= \jump{f} \\
g^{\partial_\bfn \Delta} &= \jump{\partial_\bfn f},
\end{aligned}
\end{equation}
where $g^\Delta \in LC^1(\Gamma)$ and $g^{\partial_\bfn \Delta} \in LC^0(\Gamma)$ by our regularity assumption on $f$.

We will use the 5-point Laplacian $\Delta^h$ as our finite difference discretization $D^h_{p,q}$ of $D = \Delta$, for which the required smoothness is $q = 3$. We can then construct the jump extrapolation $v$ in accordance with Section \ref{sec:splice:calculation}, using the jump conditions in \eqref{eqn:jump-poisson} and \eqref{eqn:jump-poisson-extra-jumps}. Finally, we discretize the Poisson equation using \eqref{eqn:discretization} and we have
\begin{equation} 
\label{eqn:spliced-jump-poisson}
\left\{ \setlength\arraycolsep{2pt} \begin{array}{rll}
\Delta^h u &= f + \Delta^h (v H(\phi)) - (\Delta^h v) H(\phi) & \text{on } \Omega \\
u &= h &\text{on } \partial \Omega. \\
\end{array} \right.\end{equation}
The jump conditions have been fully integrated into the right-hand side of the discretized Poisson solve. Because $v$ is determined only by the jump information $g^i$ and the signed distance function $\phi$, the right-hand side does not depend on $u$. We need only invert the standard 5-point Laplacian $\Delta^h$ to solve for a second-order accurate approximation to $u$.

It should be noted that it is possible to dispense with the fourth jump condition $g^{\partial_\bfn \Delta} = \jump{\partial_\bfn f}$ and still solve \eqref{eqn:spliced-jump-poisson} with second-order accuracy. The key here is a result from Beale and Layton \cite{beale:2006}, which shows that we only need a local truncation error of $\bigo{h}$ near the interface to have an overall $\bigo{h^2}$ accurate solution to \eqref{eqn:spliced-jump-poisson}. We can thus construct $v$ with $q = 2$, which does not require the fourth jump condition, and still achieve second-order accuracy. Another consequence is that we need only satisfy the $q = 2$ accuracy conditions in Section \ref{sec:splice:accuracy} to achieve overall second-order accuracy, even if we use the $q = 3$ construction. That said, when $\jump{\partial_\bfn f}$ is available, we achieve better numerical results with the $q = 3$ solution.

\subsection{Implementation}
\label{sec:elliptic:implementation}

Under the same assumptions as in Section \ref{sec:splice:implementation}, we construct the jump extrapolation $\hat v$ from the jump information in \eqref{eqn:jump-poisson} and \eqref{eqn:jump-poisson-extra-jumps} using Algorithm \ref{alg:jump-extrapolation} with $q = 3$. We also assume we have a discrete approximation  $\hat f_{i,j} = f(\bfx_{i,j}) + \bigo{h^2}$. 

To solve the Poisson equation with jumps \eqref{eqn:jump-poisson}, we simply perform a linear solve
\[ \Delta^h u = f + \Delta^h (\hat v H(\phi)) - (\Delta^h \hat v) H(\phi), \]
where here $\Delta^h$ is imbued with the appropriate boundary condition. This system is a standard Poisson solve on a rectangular grid, and therefore can be accomplished quickly with conjugate gradients or multigrid. Note that, with geometric multigrid, this solve can be performed in just $\bigo{N}$ time, where $N$ is the total number of grid points, which is asymptotically optimal.

\subsection{Results}
\label{sec:elliptic:results}

We have performed extensive tests of the convergence and accuracy properties of the jump splice methodology applied to the Poisson equation. A few selections are presented here. In all of the following examples, we take our domain to be $\Omega = [-1,1]^d$, where $d = 2$ or $d = 3$. 

\begin{enumerate}[label=\bfseries Example \thesection.\arabic*.\ , align=left, leftmargin=0cm, itemindent=0cm, labelwidth=0cm, labelsep=0cm, ref=\thesection.\arabic*]

\item \label{ex:leveque-1} Here we compare the results of jump splice methodology to the Immersed Interface Method \cite{leveque:1994}. We take our interface $\Gamma$ to be the circle of radius $R = 0.5$ centered at the origin, and solve Laplace's equation $\Delta u = 0$ subject to the jump condition $\jump{\partial_\bfn u} = 2$ and with boundary condition given by the exact solution
\begin{equation}
\label{eqn:example1-u}
u(\bfx) = 1 + \log(2|\bfx|) \left(1 - H(\phi)\right).
\end{equation}

\begin{figure}[ht]
\centering
\includegraphics[width={0.6\linewidth}, clip=true, trim={0.6\linewidth} {0.2\linewidth} {0.6\linewidth} {0.1\linewidth}]{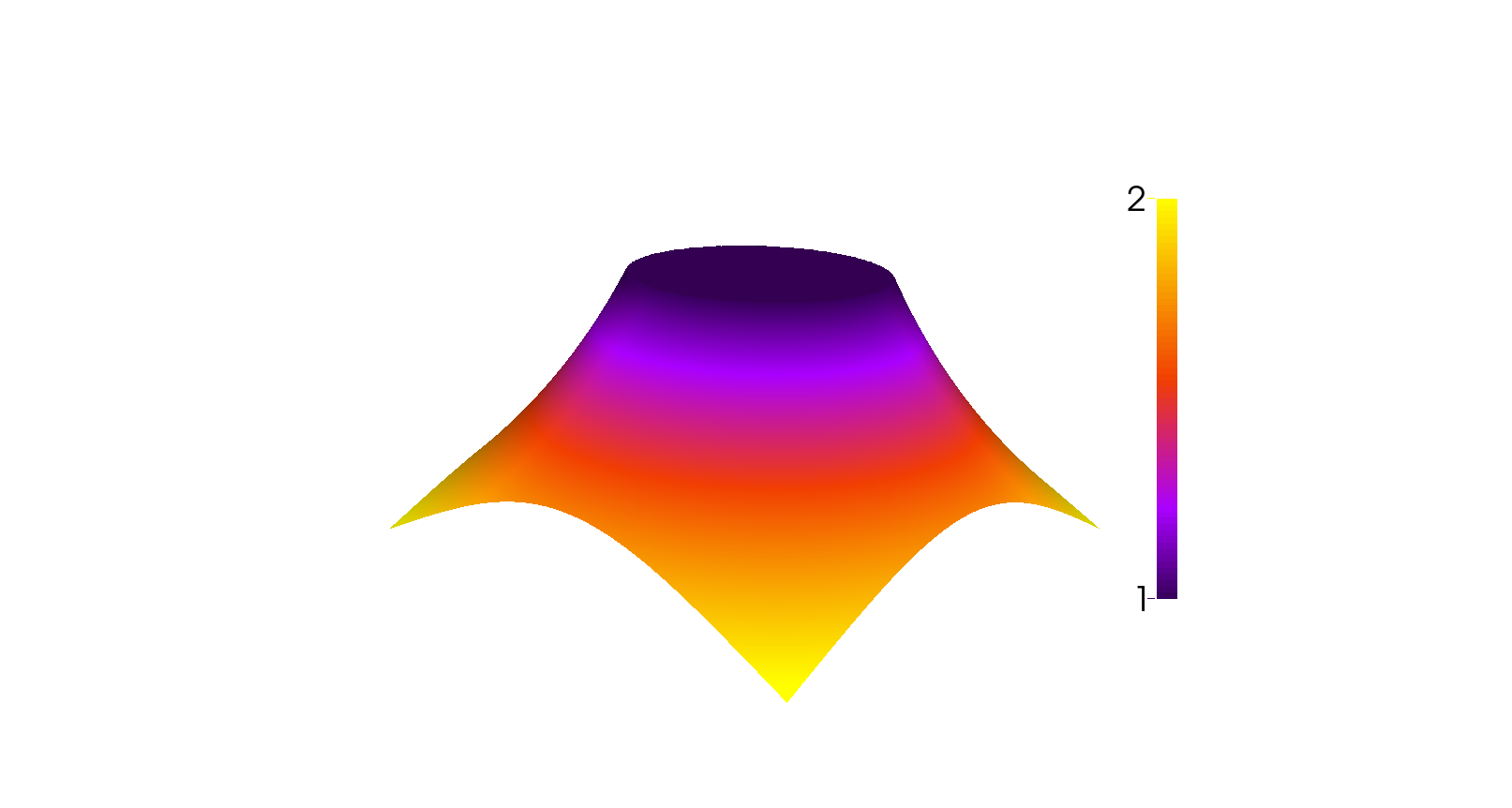}
\caption{Calculated solution $u$ in Example \ref{ex:leveque-1} on a $160 \times 160$ grid. Exact solution given by \eqref{eqn:example1-u}.}
\label{fig:leveque-1-u}
\end{figure}

\begin{table}[ht]
\centering
\begin{tabular}{rccccccc}
  \toprule
& \multicolumn{2}{c}{IIM} & \phantom{abc} & \multicolumn{4}{c}{Jump Splice} \\
  \cmidrule{2-3} \cmidrule{5-8}
$n$ & $L^\infty(\Omega)$ & Rate && $L^\infty(\Omega)$ & Rate & $L^2(\Omega)$ & Rate \\
  \midrule
20  & 2.391\e{-3} &      && 2.132\e{-3} &      & 2.259\e{-3} &\\
40  & 8.346\e{-4} & 1.5  && 5.129\e{-4} & 2.1  & 5.269\e{-4} & 2.1\\
80  & 2.445\e{-4} & 1.8  && 1.233\e{-4} & 2.1  & 1.253\e{-4} & 2.1\\
160 & 6.686\e{-5} & 1.9  && 3.206\e{-5} & 1.9  & 3.258\e{-5} & 1.9\\
320 & 1.567\e{-5} & 2.1  && 7.949\e{-6} & 2.0  & 8.064\e{-6} & 2.0\\
640 &             &      && 1.981\e{-6} & 2.0  & 2.009\e{-6} & 2.0\\
1280&             &      && 4.961\e{-7} & 2.0  & 5.030\e{-7} & 2.0\\
2560&             &      && 1.239\e{-7} & 2.0  & 1.256\e{-7} & 2.0\\
  \bottomrule
\end{tabular}
\caption{Comparison of numerical results between Immersed Interface Method (IIM) and jump splice for Example \ref{ex:leveque-1}.}
\label{tab:leveque-1}
\end{table}

The solution obtained using jump splice techniques can be seen in Figure \ref{fig:leveque-1-u}. Table \ref{tab:leveque-1} shows an analysis of convergence and a comparison to data from \cite{leveque:1994}.

\item \label{ex:leveque-3} We again compare results with the IIM in \cite{leveque:1994}. $\Gamma$ is still the circle of radius $R = 0.5$ and we again solve $\Delta u = 0$, but this time we stipulate jumps and boundary conditions such that
\begin{equation}
\label{eqn:example2-u}
u(x,y) = (e^x \cos y) H(\phi)
\end{equation}
is the exact solution. The solution obtained with jump splice can be seen in Figure \ref{fig:leveque-3-u} and convergence results are presented in Table \ref{tab:leveque-3}. 

\begin{figure}[ht]
\centering
\includegraphics[width={0.6\linewidth}, clip=true, trim={0.6\linewidth} {0.2\linewidth} {0.6\linewidth} {0.1\linewidth}]{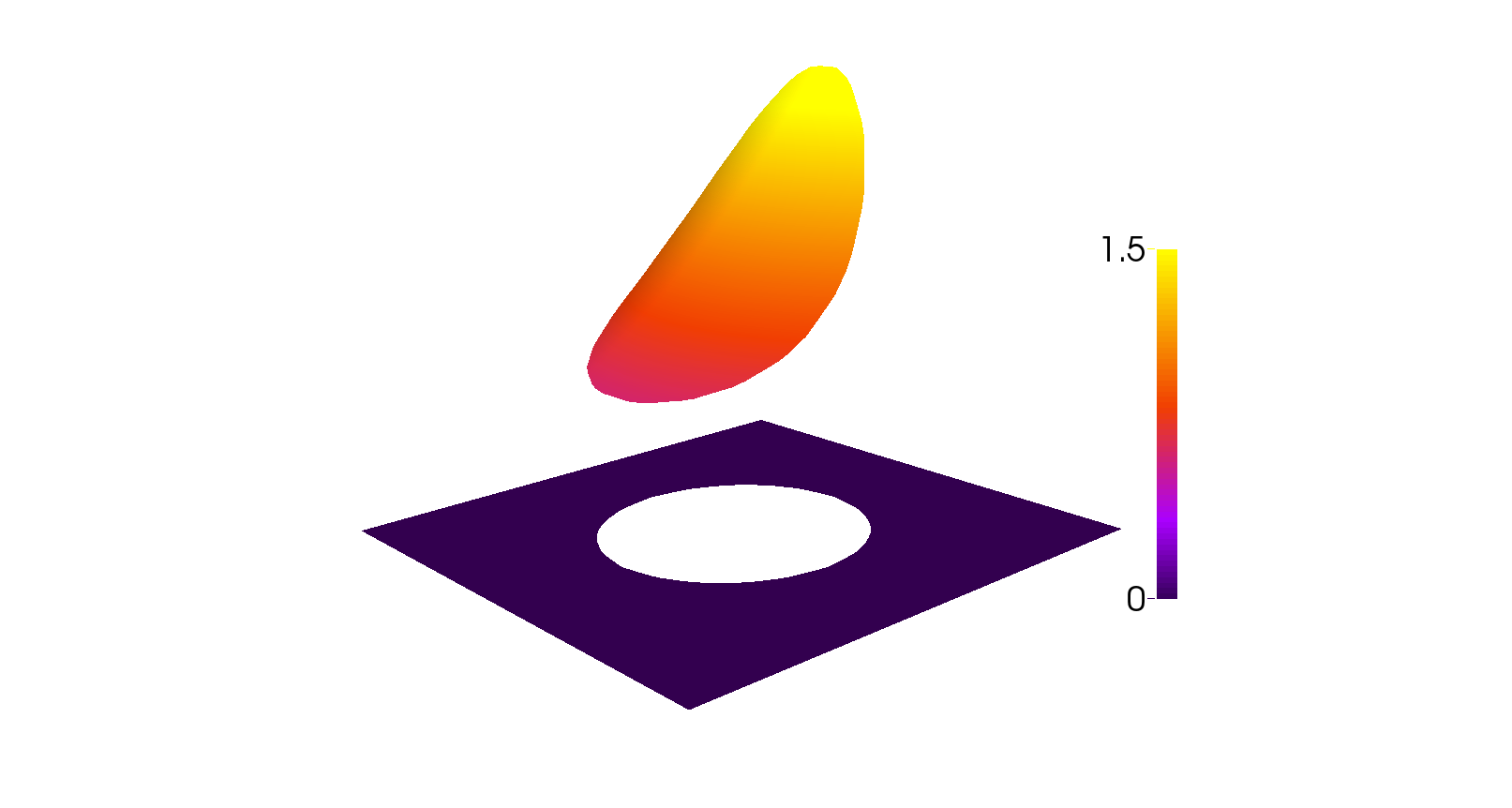}
\caption{Calculated solution $u$ in Example \ref{ex:leveque-3} on a $160 \times 160$ grid. Exact solution given by \eqref{eqn:example2-u}.}
\label{fig:leveque-3-u}
\end{figure}

\begin{table}[ht]
\centering
\begin{tabular}{rccccccc}
  \toprule
& \multicolumn{2}{c}{IIM} & \phantom{abc} & \multicolumn{4}{c}{Jump Splice} \\
  \cmidrule{2-3} \cmidrule{5-8}
$n$ & $L^\infty(\Omega)$ & Rate && $L^\infty(\Omega)$ & Rate & $L^2(\Omega)$ & Rate \\
  \midrule
20  & 4.379\e{-4} &      && 2.066\e{-2} &      & 7.980\e{-3} &\\
40  & 1.079\e{-4} & 2.0  && 6.728\e{-5} & 8.3  & 5.741\e{-5} & 7.1\\
80  & 2.778\e{-5} & 2.0  && 1.689\e{-5} & 2.0  & 1.438\e{-5} & 2.0\\
160 & 7.499\e{-6} & 1.9  && 4.209\e{-6} & 2.0  & 3.578\e{-6} & 2.0\\
320 & 1.740\e{-6} & 2.1  && 1.053\e{-6} & 2.0  & 8.950\e{-7} & 2.0\\
640 &             &      && 2.633\e{-7} & 2.0  & 2.238\e{-7} & 2.0\\
1280&             &      && 6.577\e{-8} & 2.0  & 5.589\e{-8} & 2.0\\
2560&             &      && 1.633\e{-8} & 2.0  & 1.386\e{-8} & 2.0\\
  \bottomrule
\end{tabular}
\caption{Comparison of numerical results between Immersed Interface Method (IIM) and jump splice for Example \ref{ex:leveque-3}.}
\label{tab:leveque-3}
\end{table}

\item \label{ex:quarteroni-1} We now investigate application of jump splice methodology to an interface that is $C^1$ but not $C^2$. We compare to results from the Simplified Exact Subgrid Interface Correction (SESIC) method \cite{discacciati:2013}, which is a recently developed finite element method for \eqref{eqn:jump-poisson} that performs well on non-smooth interfaces. The interface $\Gamma$ is defined by the level set function
\begin{equation}
\phi(x, y) = \begin{cases}
0.2 - \sqrt{x^2 + (\frac{1}{2} - |y|)^2} & \text{if } |y| > \frac{1}{2} \\
0.2 - |x| & \text{if } |y| \leq \frac{1}{2} \\
\end{cases}
\end{equation}
and we solve $\Delta u = 0$ with jump conditions and Dirichlet boundary conditions given by the exact solution
\begin{equation}
\label{eqn:quarteroni-1-exact}
u(\bfx) = (1 - \log(2|\bfx|)) (1 - H(\phi)). 
\end{equation}
The solution obtained with jump splice can be seen in Figure \ref{fig:quarteroni-1-u}, and convergence results are presented in Table \ref{tab:quarteroni-1}. Because the construction of the jump extrapolation for $q = 3$ requires all quantities to be defined in a band around $\Gamma$ of width approximately $10h$, the jump splice suffers from poor performance on extremely coarse grids, as seen here for $n = 20$ and $n = 40$. Results can be significantly improved by using second-order stencils in the construction of the jump extrapolation or by using the $q = 2$ construction on coarse grids. Note also that jump splice techniques were developed assuming smooth $\Gamma$, but second-order convergence is achieved here even with a $C^1$ interface.

\begin{figure}[ht]
\centering
\includegraphics[width={0.4\linewidth}]{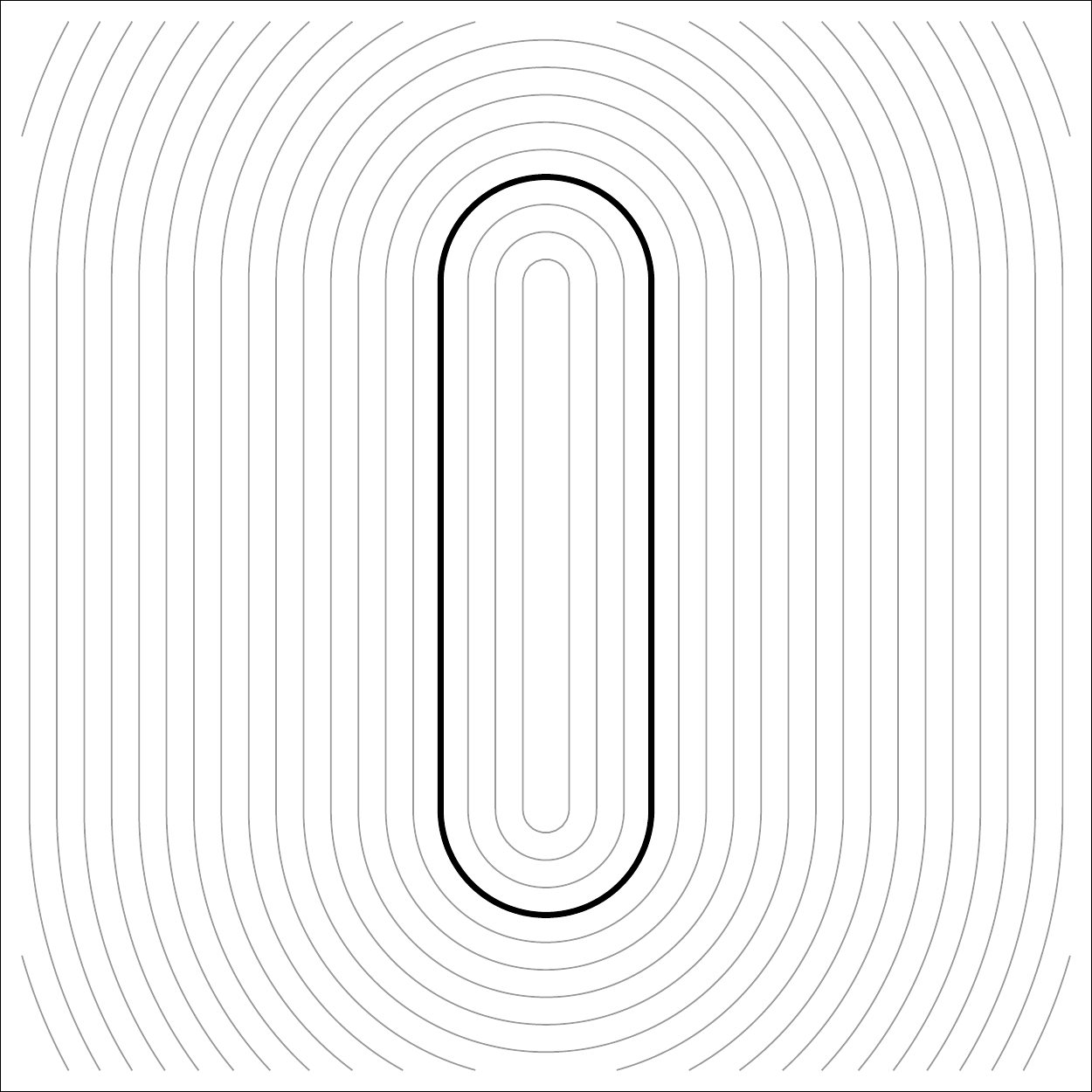}
\qquad
\includegraphics[width={0.5\linewidth}, clip=true, trim={0.9\linewidth} {0.2\linewidth} {0.8\linewidth} {0.1\linewidth}]{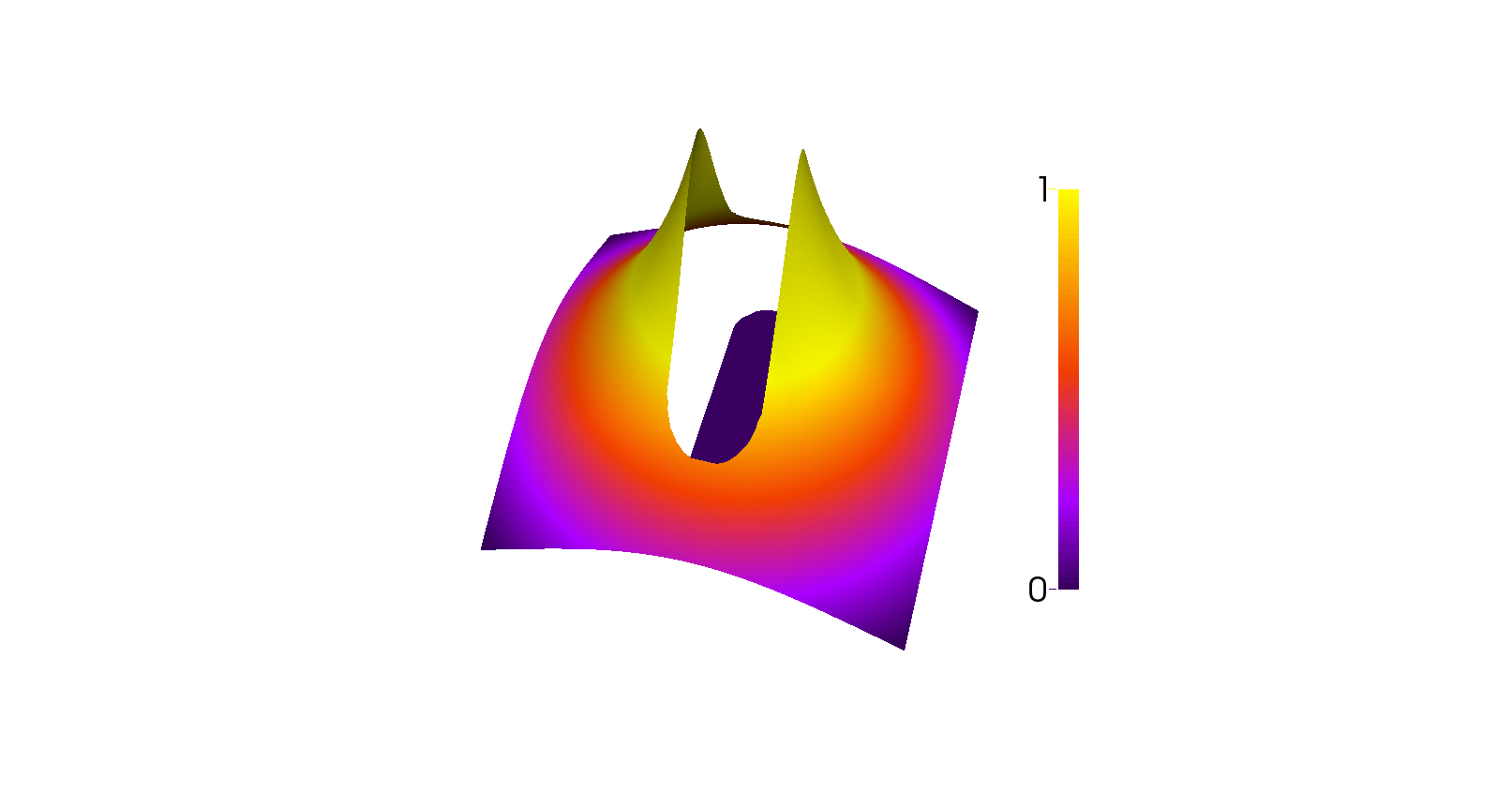}
\caption{Signed distance function (left, $\Gamma$ in bold) and computed solution $u$ to Example \ref{ex:quarteroni-1} on a $160 \times 160$ grid. Exact solution given by \eqref{eqn:quarteroni-1-exact}.}
\label{fig:quarteroni-1-u}
\end{figure}

\begin{table}[ht]
\centering
\begin{tabular}{rccccccccc}
  \toprule
& \multicolumn{4}{c}{SESIC} & \phantom{abc} & \multicolumn{4}{c}{Jump Splice} \\
  \cmidrule{2-5} \cmidrule{7-10}
$n$ & $L^\infty(\Omega)$ & Rate & $L^2(\Omega)$ & Rate && $L^\infty(\Omega)$ & Rate & $L^2(\Omega)$ & Rate \\
  \midrule
19/20   & 3.50\e{-2} &      & 1.19\e{-2} &      && 2.226\e{-1} &      & 1.485\e{-1} &      \\
39/40   & 1.09\e{-2} & 1.6  & 3.21\e{-3} & 1.8  && 1.489\e{-1} & 0.6  & 7.179\e{-2} & 1.1  \\
79/80   & 3.24\e{-3} & 1.7  & 8.83\e{-4} & 1.8  && 8.849\e{-4} & 7.4  & 3.032\e{-4} & 7.9  \\
159/160 & 1.02\e{-3} & 1.7  & 2.65\e{-4} & 1.7  && 2.120\e{-4} & 2.1  & 7.251\e{-5} & 2.1  \\
320     &            &      &            &      && 5.293\e{-5} & 2.0  & 1.813\e{-5} & 2.0  \\
640     &            &      &            &      && 1.323\e{-5} & 2.0  & 4.533\e{-6} & 2.0  \\
1280    &            &      &            &      && 3.307\e{-6} & 2.0  & 1.134\e{-6} & 2.0  \\
2560    &            &      &            &      && 8.268\e{-7} & 2.0  & 2.834\e{-7} & 2.0  \\
  \bottomrule
\end{tabular}
\caption{Comparison of numerical results between Simplified Exact Subgrid Interface Correction (SESIC) method and jump splice for Example \ref{ex:quarteroni-1}.}
\label{tab:quarteroni-1}
\end{table}

\item \label{ex:quarteroni-0} Next we apply jump splice methodology to an interface that is $C^0$ but not $C^1$, and compare once again with SESIC. The interface $\Gamma$ is defined by the level set function
\begin{equation}
\phi(x, y) = \begin{cases}
0.5 - \sqrt{(x - \frac{\sqrt{2}}{4})^2 + y^2} & \text{if } x \geq 0 \\
0.5 - \sqrt{(x + \frac{\sqrt{2}}{4})^2 + y^2} & \text{if } x < 0, \\
\end{cases}
\end{equation}
which we reconstruct into a signed distance function using fifth-order techniques from \cite{saye:2014}, and the exact solution is the same as given by \eqref{eqn:quarteroni-1-exact}, now with different $\phi$. Solution obtained with jump splice can be seen in Figure \ref{fig:quarteroni-0-u}, and convergence results are presented in Table \ref{tab:quarteroni-0}.

\begin{figure}[ht]
\centering
\includegraphics[width=0.4\linewidth]{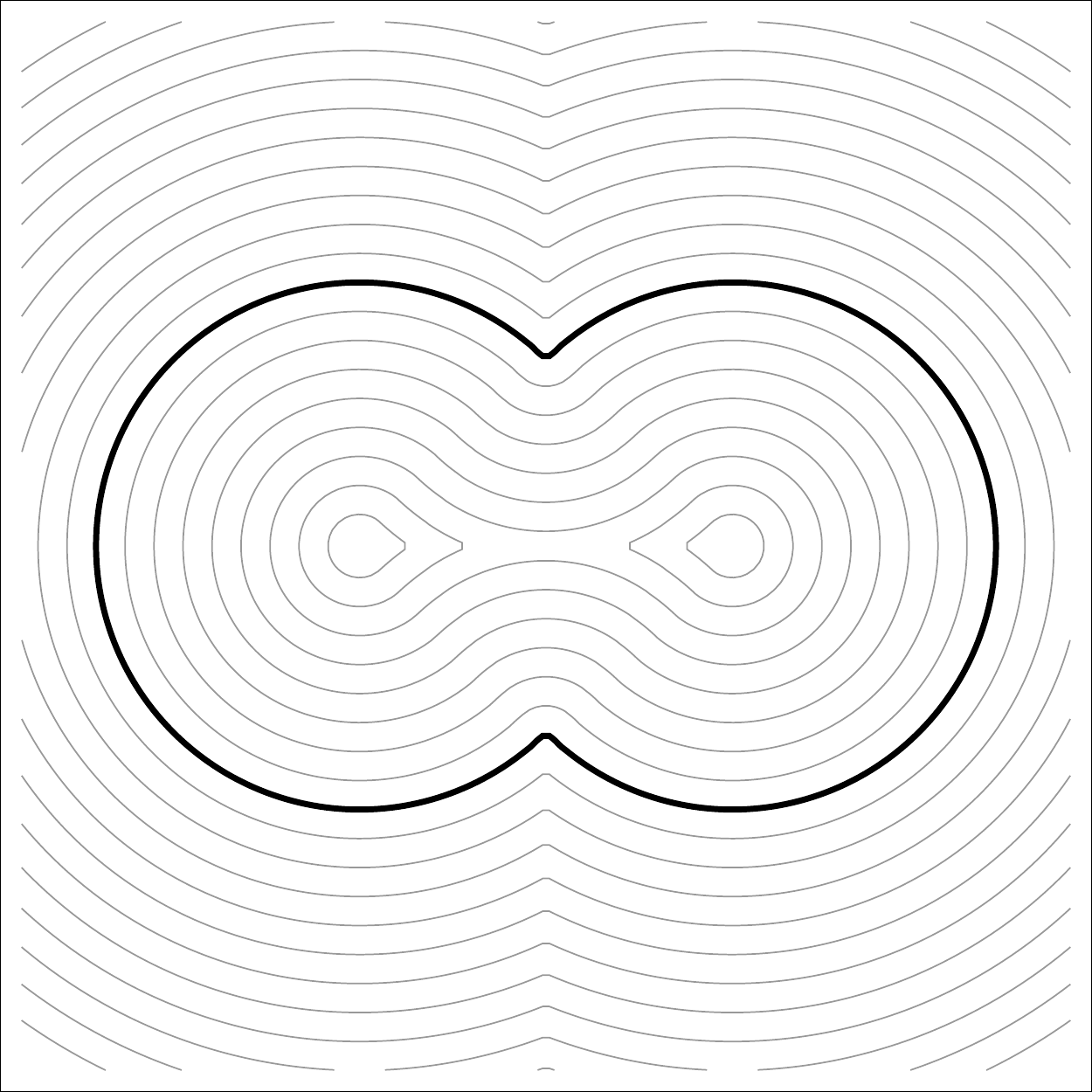}
\qquad
\includegraphics[width={0.5\linewidth}, clip=true, trim={0.9\linewidth} {0.2\linewidth} {0.8\linewidth} {0.1\linewidth}]{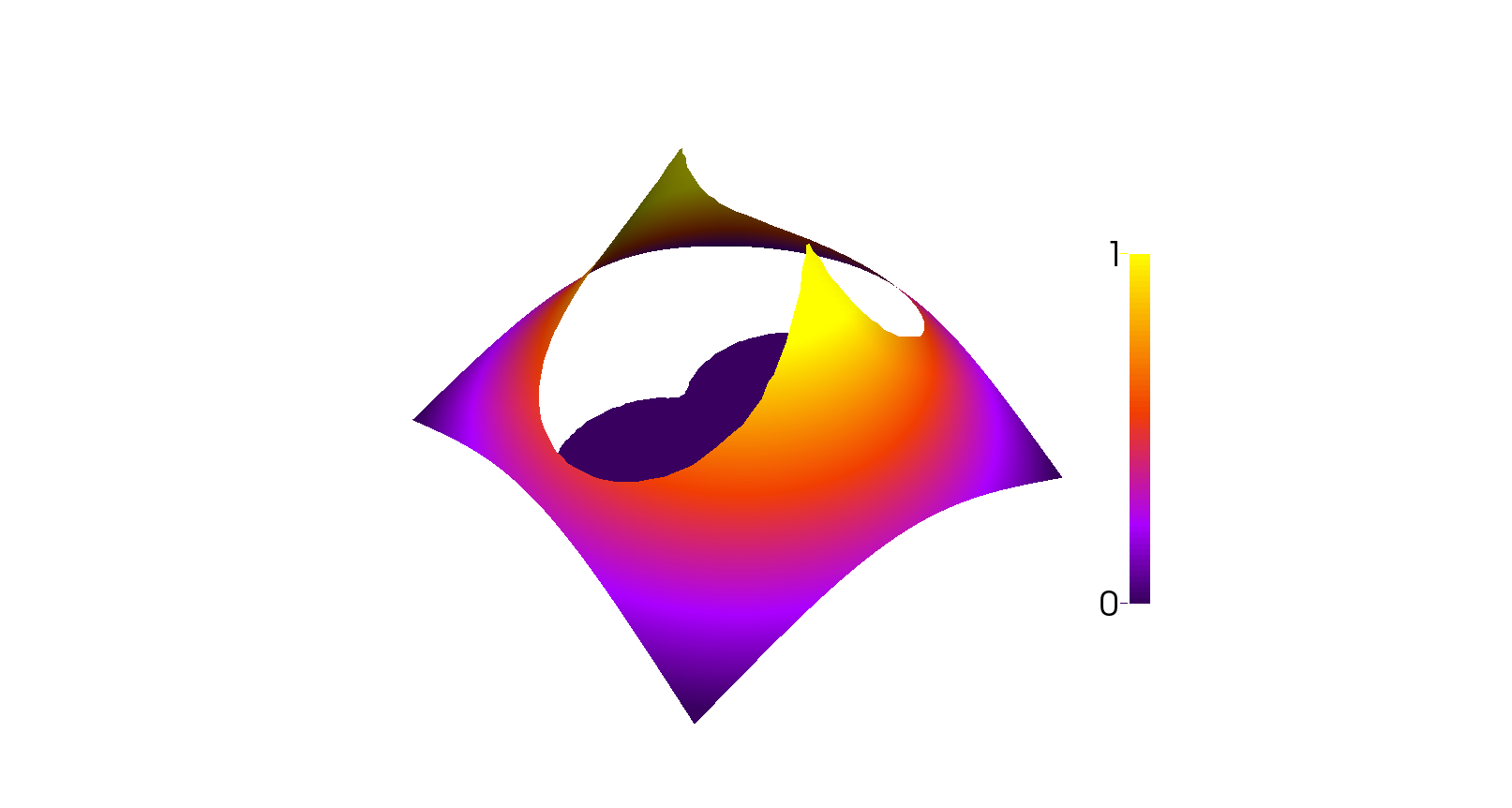}
\caption{Signed distance function (left, $\Gamma$ in bold) and computed solution $u$ to Example \ref{ex:quarteroni-0} on a $160 \times 160$ grid. Exact solution given by \eqref{eqn:quarteroni-1-exact}.}
\label{fig:quarteroni-0-u}
\end{figure}

\begin{table}[ht]
\centering
\begin{tabular}{rccccccccc}
  \toprule
& \multicolumn{4}{c}{SESIC} & \phantom{abc} & \multicolumn{4}{c}{Jump Splice} \\
  \cmidrule{2-5} \cmidrule{7-10}
$n$ & $L^\infty(\Omega)$ & Rate & $L^2(\Omega)$ & Rate && $L^\infty(\Omega)$ & Rate & $L^2(\Omega)$ & Rate \\
  \midrule
19/20   & 5.13\e{-2} &      & 2.73\e{-2} &      && 1.822\e{-1} &      & 6.730\e{-2} &      \\
39/40   & 2.87\e{-2} & 0.8  & 1.41\e{-2} & 0.9  && 7.645\e{-3} & 4.6  & 3.895\e{-3} & 4.1  \\
79/80   & 1.65\e{-2} & 0.8  & 7.12\e{-3} & 1.0  && 6.363\e{-3} & 0.3  & 3.669\e{-3} & 0.1  \\
159/160 & 1.00\e{-2} & 0.7  & 4.07\e{-3} & 1.0  && 2.970\e{-3} & 1.1  & 1.373\e{-3} & 1.4  \\
320     &            &      &            &      && 1.755\e{-3} & 0.8  & 7.215\e{-4} & 0.9  \\
640     &            &      &            &      && 9.087\e{-4} & 1.0  & 3.174\e{-4} & 1.2  \\
1280    &            &      &            &      && 5.457\e{-4} & 0.7  & 1.784\e{-4} & 0.8  \\
2560    &            &      &            &      && 3.110\e{-4} & 0.8  & 9.234\e{-5} & 1.0  \\
  \bottomrule
\end{tabular}
\caption{Comparison of numerical results between Simplified Exact Subgrid Interface Correction (SESIC) method and jump splice for Example \ref{ex:quarteroni-0}.}
\label{tab:quarteroni-0}
\end{table}

\item \label{ex:ellipse} Finally, we perform numerical tests in 3D. Note that the jump splice formulation is unchanged, apart from replacing the finite difference operators $\Delta^h$ and $\nabla^h$ with their 3D counterparts. We define the interface $\Gamma$ to be an ellipsoid with semi-principal axes $\mathbf{R} = (0.7, 0.3, 0.5)$. The signed distance function $\phi$ is again constructed using fifth-order techniques from \cite{saye:2014}. The jump and boundary conditions are given by the exact solution
\begin{equation}
\label{eqn:example5-u}
u(\bfx) = \left(\frac{1}{|\bfx|}\right)(1 - H(\phi)),
\end{equation}
to the equation $\Delta u = 0$. Numerical results are presented in Table \ref{tab:ellipse}.

\begin{table}[ht]
\centering
\begin{tabular}{rcccc}
  \toprule
$n$ & $L^\infty(\Omega)$ & Rate & $L^2(\Omega)$ & Rate \\
  \midrule
64  & 2.969\e{-3} &      & 6.513\e{-4} &     \\
128 & 7.802\e{-4} & 1.9  & 1.616\e{-4} & 2.0 \\
256 & 1.952\e{-4} & 2.0  & 4.068\e{-5} & 2.0 \\
512 & 4.790\e{-5} & 2.0  & 1.007\e{-5} & 2.0 \\
  \bottomrule
\end{tabular}
\caption{Convergence results in 3D for Example \ref{ex:ellipse}. Errors are for solving $\Delta u = 0$, in the presence of jumps across an ellipsoid, using jump splice techniques.}
\label{tab:ellipse}
\end{table}

\end{enumerate}

\subsection{General Elliptic Problem}
\label{sec:general-elliptic}

Until now we have exclusively discussed the Poisson equation \eqref{eqn:jump-poisson}, but we now return to the general elliptic equation \eqref{eqn:jump-elliptic} with which we began.

When $\beta$ is smooth across the interface, jump splicing methods apply naturally to solving \eqref{eqn:jump-elliptic}. In particular, we can write $\jump{\partial_\bfn u} = g^1 / \beta$, and $-\nabla \cdot (\beta \nabla u)$ can be discretized as a symmetric positive-definite finite difference operator $D^h_{2,3}$ derived from a variational formulation, as in \cite{almgren:1998}. We can then appeal to \eqref{eqn:discretization} to arrive at a symmetric positive-definite, second-order discretization of \eqref{eqn:jump-elliptic}.

When $\beta$ is discontinuous across the interface, the jump splice framework cannot directly discretize \eqref{eqn:jump-elliptic}. To see why, observe that we can write
\begin{equation}
\label{eqn:jump-avg}
\jump{\beta \partial_\bfn u} = \jump{\beta} \avg{\partial_\bfn u} + \avg{\beta} \jump{\partial_\bfn u},
\end{equation}
where $\avg{u}(\bfx) = (u^+(\bfx) + u^-(\bfx))/2$ denotes the average value of a function $u$ across the interface for $\bfx \in \Gamma$. Though $\jump{\beta \partial_\bfn u}$, $\jump{\beta}$, and $\avg{\beta}$ are given by the formulation of the problem, $\avg{\partial_\bfn u}$ is unknown, and thus we are unable to solve for the jump condition $\jump{\partial_\bfn u}$. Without this, we cannot construct the jump extrapolation $v$ given by \eqref{eqn:v-definition}.

If $\beta$ is constant on each side of the interface, we can resolve the lack of information by introducing an unknown function $\lambda$ defined in $\Gamma_\epsilon$. We then simultaneously solve the modified general elliptic problem given by
\begin{equation} 
\label{eqn:lagrange-jump-elliptic}
\left\{ \setlength\arraycolsep{2pt} \begin{array}{rll}
-\Delta u &= f/\beta & \text{on } \Omega \setminus \Gamma \\
u &= h &\text{on } \partial \Omega \\
\jump{u} &= g^0 & \text{across } \Gamma \\
\jump{\partial_\bfn u} &= \lambda & \text{across } \Gamma,
\end{array} \right.
\end{equation}
and the constraint $\jump{\beta \partial_\bfn u} = g^1$ using ideas similar to those in \cite{li:1998}. The key to enforcing the constraint is to observe that we can approximate
\begin{equation}
\label{eqn:taylor-avg}
\avg{u}(\bfx) = u(\bfx) - v(\bfx) H(\phi(\bfx)) + \frac{1}{2} v(\bfx) + \bigo{\phi^4},
\end{equation}
where $v$ is the $q = 3$ jump extrapolation associated with \eqref{eqn:lagrange-jump-elliptic} and $\bfx \in \Gamma$. We can similarly approximate $\avg{\partial_\bfn u}$ by replacing $v$ with $\partial_\bfn v$ and $u$ with $\partial_\bfn u$ in \eqref{eqn:taylor-avg}. The constraint can then be written as
\begin{equation}
\label{eqn:lagrange-constraint}
g^1 = \jump{\beta} \left( \partial_\bfn u - (\partial_\bfn v) H + \frac{1}{2}\partial_\bfn v\right) + \avg{\beta} \lambda,
\end{equation}
and together \eqref{eqn:lagrange-jump-elliptic} and \eqref{eqn:lagrange-constraint} lead to linear system that can be solved to recover $u$. Note that $v$ depends only on $g^0$, $\lambda$, $f$, and $\beta$, and the mapping between $\lambda$ and $v$ is linear, as will be shown in Section \ref{sec:ns:temporal:jump-operators}.

Unfortunately, symmetry of the linear system is lost with this approach, and obtaining a symmetric method is the subject of current work.

\subsection{Discussion}
\label{sec:splice:discussion}

The examples in Section \ref{sec:elliptic:results} show robust second-order convergence for the jump splice method applied to solving the Poisson equation on a variety of different interfaces, in both 2D and 3D. In particular, although we derived the jump splice method by assuming that the interface $\Gamma$ was smooth, Example \ref{ex:quarteroni-1} shows that we still achieve second-order convergence with a $C^1$ interface. Example \ref{ex:quarteroni-0} goes further and shows that we still achieve roughly first-order convergence on a $C^0$ interface, where the unit normal is not strongly well-defined everywhere. We also note that the numerical errors of the jump splice are remarkably small, typically less than those seen for IIM or SESIC. Finally, because the jump splice allows use of standard symmetric positive-definite linear solvers, we achieve excellent computational performance; calculations on a $512 \times 512$ grid with one core require just $2$ seconds using basic geometric multigrid, and less than 10\% of the execution time is spent building the jump spliced right-hand side.

\section{Integration}
\label{sec:integration}

We now briefly illustrate the versatility of the jump splice by showing how Proposition \ref{prop:jump-splice} can be used to perform integration over implicitly defined surfaces. See \cite{engquist:2005, smereka:2006, wen:2009} for other approaches to this type of quadrature with level sets. We will use the methods described here to calculate the volume enclosed by an interface when we examine convergence in volume for the Navier-Stokes equations in the next section.

\subsection{Implicit Surface Integrals}
\label{sec:integration:surface}

We can use jump splice techniques to evaluate integrals of the form
\begin{equation}
\label{eqn:surface-integral}
I = \int_\Gamma \alpha\, ds, 
\end{equation}
where the interface $\Gamma$ is defined implicitly by a signed distance function and where we assume $\alpha \in LC^3(\Gamma_\epsilon)$. This is particularly useful for obtaining highly accurate calculations of volume and surface area, because
\begin{equation}
\label{eqn:integration-area}
\operatorname{Area}(\Gamma) = \int_\Gamma 1\, ds
\end{equation}
and
\begin{equation}
\label{eqn:integration-volume}
 \operatorname{Volume}(\Omega^+) = -\int_\Gamma (\bfx \cdot \bfe_1) (\bfn \cdot \bfe_1) \, ds,
\end{equation}
where $\bfe_1$ is the unit vector along the first Cartesian coordinate axis and \eqref{eqn:integration-volume} follows by the divergence theorem, recalling that $\bfn$ is the inward unit normal. The term area here refers to codimension-one measure, typically referred to as perimeter in two dimensions and surface area in three dimensions. We will make extensive use of \eqref{eqn:integration-volume} in investigating volume conservation when applying jump splice techniques to the Navier-Stokes equations in Section \ref{sec:ns}.

To see how the jump splice is used, note that by the coarea formula, we can rewrite this integral as
\[ I = \int_{\Omega} \alpha(\bfx) \delta(\phi(\bfx))\, d\bfx, \]
recalling that because $\phi$ here is taken to be a signed distance function, we have $|\nabla \phi| \equiv 1$. Next we observe that the distributional elliptic equation
\[
\left\{ \setlength\arraycolsep{2pt} \begin{array}{rll}
\Delta u &= \alpha \delta(\phi) & \text{on } \Omega \\
u &= 0 &\text{on } \partial \Omega, \\
\end{array} \right.
\]
can be written in the form of a Poisson equation with jumps, as in \eqref{eqn:jump-poisson} with $g^0 = 0$, $g^1 = \alpha$, and $f = 0$, and thus can be solved numerically using jump splice methodology as
\[
\left\{ \setlength\arraycolsep{2pt} \begin{array}{rll}
\Delta^h_p u &= \Delta^h_p (v H(\phi)) - (\Delta^h_p v) H(\phi) & \text{on } \Omega \\
u &= 0 &\text{on } \partial \Omega, \\
\end{array} \right.
\]
where $\Delta^h_p$ is an order $p$ accurate approximation to the Laplacian such that $\Delta u = \Delta^h_p u + \bigo{h^p}$ provided $u \in LC^q$ with $q = p+1$. Here $v$ is constructed as in Section \ref{sec:splice:calculation}, with $g^\Delta = 0$ and $g^{\partial_\bfn\Delta} = 0$. 

By analogy between the distributional elliptic equation and its discretization, we can see that a good approximation for $\alpha \delta(\phi)$ is given by
\begin{equation}
\label{eqn:fdelta}
\delta_{\alpha,p}^h =  \Delta^h_p (v H(\phi)) - (\Delta^h_p v) H(\phi). 
\end{equation}
We can then formulate a discretization of the integral $I$ as
\[ \hat I = h^d\sum_{i_1,\ldots, i_d} (\delta^h_{\alpha,p})_{i_1,i_2,\ldots, i_d}, \]
for $\Omega \subset \bbr^d$. Numerical experiments, including those in the next section, indicate that
\[ \hat I = I + \bigo{h^p}. \]
A detailed analysis of the convergence properties of this quadrature rule is the subject of future work.

\subsection{Results}
\label{sec:integration:results}

We have performed extensive convergence tests for jump splice integration, and we present a few examples below. Once again, we take our domain to be $\Omega = [-1,1]^d$, where $d = 2$ or $d =3$.

In the following examples, we use the fourth-order accurate discretization of the Laplacian $\Delta^h_4$, for which $p = 4$ and $q = 5$ in the notation of Section \ref{sec:splice:splice}. However, we construct the jump extrapolation $v$ only up to order $q = 3$. While $v$ constructed this way does not allow us to evaluate $\Delta^h_4$ with fourth order accuracy, we still achieve fourth-order accurate integration, as shown in the results below.

\begin{enumerate}[label=\bfseries Example \thesection.\arabic*.\ , align=left, leftmargin=0cm, itemindent=0cm, labelwidth=0cm, labelsep=0cm, ref=\thesection.\arabic*]

\item \label{ex:integration-area} We test jump splice integration by evaluating the perimeter $P$ of a circle $\Gamma$ with radius $R = 0.5$ centered at the origin $(0,0)$. We use the fourth-order Laplacian $\Delta^h_4$ along with \eqref{eqn:fdelta} to evaluate the integral given in \eqref{eqn:integration-area}. The exact result is $P = \pi$. See Table \ref{tab:integration-area} for convergence results.

\begin{table}[ht]
\centering
\begin{tabular}{rcc}
  \toprule
$n$ & Error & Rate \\
  \midrule
64   & 5.422\e{-5}  &      \\
128  & 3.142\e{-6}  & 4.1  \\
256  & 1.610\e{-7}  & 4.3  \\
512  & 1.311\e{-8}  & 3.6  \\
1024 & 7.062\e{-10} & 4.2  \\
2048 & 1.283\e{-11} & 5.8  \\
  \midrule
\multicolumn{2}{c}{Average} & 4.4 \\
  \bottomrule
\end{tabular}
\caption{Convergence results for Example \ref{ex:integration-area}. Errors are given for evaluating the perimeter of a circle using jump splice integration.}
\label{tab:integration-area}
\end{table}

\item \label{ex:integration-ex} We integrate the function $\alpha(x,y) = e^x$ over an ellipse $\Gamma$ with semi-principal axes $\mathbf{R} = (0.35, 0.7)$. As in Example \ref{ex:integration-area}, we use the fourth-order Laplacian $\Delta^h_4$ along with \eqref{eqn:fdelta}. The answer is given to ten decimal places by
\[ \int_\Gamma \alpha\, ds \approx 3.5123690943. \]
See Table \ref{tab:integration-ex} for convergence results. 

\begin{table}[ht]
\centering
\begin{tabular}{rcc}
  \toprule
$n$ & Error & Rate \\
  \midrule
64   & 2.289\e{-4}  &      \\
128  & 1.414\e{-5}  & 4.0 \\
256  & 7.309\e{-7}  & 4.3 \\
512  & 7.100\e{-8}  & 3.4 \\
1024 & 7.116\e{-9}  & 3.3 \\
2048 & 1.250\e{-10} & 5.8 \\
  \midrule
\multicolumn{2}{c}{Average} & 4.2 \\
  \bottomrule
\end{tabular}
\caption{Convergence results for Example \ref{ex:integration-ex}. Errors are given for evaluating the surface integral of $e^x$ over an ellipse using jump splice integration.}
\label{tab:integration-ex}
\end{table}

\item \label{ex:integration-vol} We use \eqref{eqn:integration-volume} to evaluate the volume of an ellipsoid $\Gamma$ with semi-principal axes ${\mathbf{R} = (0.35, 0.7, 0.5)}$. We use the 3D analog of $\Delta^h_4$ along with \eqref{eqn:fdelta}. The exact answer is given by $V = \pi(0.35)(0.7)(0.5)$. See Table \ref{tab:integration-vol} for convergence results.

\begin{table}[ht]
\centering
\begin{tabular}{rcc}
  \toprule
$n$ & Error & Rate \\
  \midrule
64   & 3.801\e{-5} &      \\
128  & 7.703\e{-7} & 5.6 \\
256  & 9.100\e{-8} & 3.1 \\
512  & 4.445\e{-9} & 4.4 \\
  \midrule
\multicolumn{2}{c}{Average} & 4.4  \\
  \bottomrule
\end{tabular}
\caption{Convergence results for Example \ref{ex:integration-vol}. Errors are given for calculating the volume on an ellipsoid using jump splice integration.}
\label{tab:integration-vol}
\end{table}

\end{enumerate}

\clearpage

\section{Application to Incompressible Navier-Stokes Equations}
\label{sec:ns}

Singular forces at a fluid-fluid interface, as occur in surface tension and membrane elasticity, give rise to jumps in the fluid velocity $\bfu$ and pressure $p$. A vast literature exists on methods (see, for example, \cite{peskin:2002, li:2001, leveque:2003}) to solve the incompressible Navier-Stokes equations in the presence of singular forces, and some of these approaches smooth out the discontinuities in $\bfu$ and $p$ and thereby achieve only first-order accuracy. Our goal is to use jump splice techniques to solve the incompressible Navier-Stokes equations and, by preserving discontinuities, obtain second-order accurate solutions in the presence of singular forces. 

This section illustrates the versatility of jump splice methodology; here we must not only solve elliptic equations with prescribed jumps, but also evaluate derivatives arbitrarily close to the interface. The jump splice unifies these tasks into a single coherent framework. We proceed as follows.

\begin{itemize}
\setlength\itemsep{0.2em}
\item We begin by reviewing the singular force Navier-Stokes equations and their corresponding jump conditions in Section \ref{sec:ns:singular-force}.
\item Next, in Section \ref{sec:ns:projection}, we discuss a basic projection method used to solve for fluid flow in the absence of singular forces.
\item In Section \ref{sec:ns:temporal}, we extend jump splice techniques to handle quantities that vary in both time and space. To do this, we introduce temporal jump splicing for time derivatives and jump operators for the determination of intermediate quantities in the projection method.
\item We then use these techniques to modify the approximate projection method to accommodate jumps in the velocity and pressure while preserving second-order accuracy in Sections \ref{sec:ns:projection-spliced} and \ref{sec:ns:spatial}.
\item In Sections \ref{sec:ns:surface-tension} and \ref{sec:ns:implementation}, we restrict to the case of surface tension and describe the full algorithm in detail.
\item Finally, in Section \ref{sec:ns:results}, we show extensive convergence results and compare with the smoothed $\delta$ approach.
\end{itemize}

\subsection{Singular Force Navier-Stokes Equations}
\label{sec:ns:singular-force}

The singular force Navier-Stokes equations are typically written as
\begin{equation}
\label{eqn:singularns}
\left\{ \setlength\arraycolsep{2pt} \begin{array}{rll}
\rho\left(\partial_t \bfu + (\bfu \cdot \nabla) \bfu \right) &= - \nabla p + \mu \Delta \bfu + \bff \delta(\phi) & \text{in } \Omega \\
\nabla \cdot \bfu &= 0 &\text{in } \Omega \\
\bfu &= 0 &\text{on } \partial \Omega,\\
\end{array} \right.
\end{equation}
where $\rho$ and $\mu$ denote density and viscosity and are herein assumed to be constant, $p$ is the scalar pressure field, $\bfu$ is the fluid velocity field, and $\bff$ represents all singular interface forces. We do not include a bulk forcing term here, but none of the resulting analysis is changed by including an additional non-singular force on the right-hand side.

The singular force $\bff \delta(\phi)$ in \eqref{eqn:singularns} gives rise to discontinuities in the velocity and pressure across the interface that are entirely determined by $\bff$. In what follows, we assume $\bff$ is defined in a band $\Gamma_\epsilon$ around the interface, and we decompose $\bff$ into tangential and normal components as
\[\bff = \bff_s + f_\bfn \bfn, \]
where $f_\bfn = \bff \cdot \bfn$ and $\bff_s \cdot \bfn = 0$. Lai and Li \cite{lai:2001} as well as Xu and Wang \cite{xu:2006} have shown that
\begin{equation}
\label{eqn:nsjumpconditions1}
\begin{aligned}[c]
\jump{\bfu} &= 0  \\
\jump{\partial_\bfn \bfu} &= -\frac{1}{\mu}\bff_s
\end{aligned}
\qquad
\begin{aligned}[c]
\jump{p} &= f_\bfn\\
\jump{\partial_\bfn p} &= \nabla_s \cdot \bff_s,
\end{aligned}
\end{equation}
where we have written the jump conditions in coordinate-independent form. From these conditions, by differentiating\footnote{We expect $\bfu$ and $p$ to be smooth on $\Omega \setminus \Gamma$, as \eqref{eqn:singularns} reduces to the viscous Navier-Stokes equations on either side of the interface.} \eqref{eqn:singularns} on each side of the interface and taking jumps, we have that
\begin{equation}
\label{eqn:nsjumpconditions2}
\begin{aligned}[c]
\jump{\Delta \bfu } 
&= \frac{1}{\mu}\biggl((\nabla_s \cdot \bff_s) \bfn + \nabla_s f_\bfn\biggr)  \\
\jump{\partial_\bfn \Delta \bfu} 
&= -\frac{\rho}{\mu^2}\biggl(\partial_t \bff_s + \nabla\bff_s\cdot\bfu + \nabla \bfu \cdot \bff_s - 2(\bfn \cdot \nabla \bfu \cdot \bff_s)\bfn +  (\bfn \cdot \nabla \bfu \cdot \bfn)\bff_s \biggr) \\
&+- \frac{1}{\mu}\biggl((\Delta_s f_\bfn)\bfn + \kappa (\nabla_s \cdot \bff_s)\bfn + \nabla_s f_\bfn \cdot \nabla \bfn - \nabla_s(\nabla_s \cdot \bff_s)\biggr) \\
\jump{\Delta p} &= \frac{2\rho}{\mu} \biggl( \bfn \cdot \nabla\bfu \cdot \bff_s \biggr) \\
\jump{\partial_\bfn \Delta p} &= \frac{2\rho}{\mu}\biggl(\bfn \cdot \nabla (\partial_\bfn \bfu) \cdot \bff_s - (\nabla_s \cdot \bff_s)(\bfn \cdot \nabla \bfu \cdot \bfn) - \kappa (\bfn \cdot \nabla \bfu \cdot \bff_s) + \Tr(\nabla_s \bff_s \cdot \nabla \bfu) \biggr. \\
&\phantom{=\frac{2\rho}{\mu}(} \biggl. - \bfn \cdot \nabla \bfu \cdot \nabla \bfn \cdot \bff_s - \bfn \cdot \nabla\bfu \cdot \nabla_s f_\bfn + \mu^{-1}(\bfn \cdot \nabla_s \bff_s \cdot \bff_s)\biggr)
\end{aligned}
\end{equation}
These equations provide all of the information needed to discretize \eqref{eqn:singularns} using the jump splice framework. 

\subsection{Approximate Projection Method}
\label{sec:ns:projection}

In the absence of singular forces, and thus in the absence of jump conditions, we solve the Navier-Stokes equations using an approximate projection method based on \cite{almgren:1996}, which is in turn based on earlier work in \cite{chorin:1968, bell:1989}. In particular, we discretize in time as
\begin{subequations}
\label{eqn:projectionmethod}
\begin{align}
\label{eqn:projectionmethod-1}
\frac{\bfu^{*} - \bfu^n}{\Delta t} &= -(\bfu^n \cdot \nabla) \bfu^n - \frac{1}{\rho}\nabla p^n + \frac{\mu}{\rho} \Delta \bfu^*\\
\frac{\bfu^{n+1} - \bfu^*}{\Delta t} &= -\frac{1}{\rho} \nabla \psi \\
\frac{p^{n+1} - p^n}{\Delta t} &= \frac{1}{\Delta t} \psi,
\end{align}
\end{subequations}
where the pressure update $\psi$ is determined by solving
\begin{equation}
\label{eqn:pressureupdate}
\left\{ \setlength\arraycolsep{4pt} \begin{array}{rll}
\Delta \psi &= \frac{\rho}{\Delta t} \left(\nabla \cdot \bfu^*\right) & \text{in } \Omega \\
\nabla \psi \cdot \bfnu &= 0 &\text{on } \partial \Omega,
\end{array} \right.
\end{equation}
where $\bfu^k$ and $p^k$ denote quantities evaluated at time $t = t_k$ for $k = n, n+1$ and $\bfnu$ is the outward normal to $\partial\Omega$. We also enforce $\bfu^* = 0$ on $\partial\Omega$ in \eqref{eqn:projectionmethod-1}. The scheme defined by \eqref{eqn:projectionmethod} and \eqref{eqn:pressureupdate} leads to a method that is first-order accurate in time. This is sufficient for our purposes, as singular force simulations tend to have stringest CFL constraints such that the time step is limited more by stability than by accuracy; for example, surface tension requires a time step of $\Delta t = \bigo{h^{3/2}}$, as shown in \cite{brackbill:1992}.

Spatial discretization is straightforward. We use a second-order Essentially Non-Oscillatory (ENO) method from \cite{osher:2002} for the advection term, second-order centered differences for calculating gradients, and the standard five-point Laplacian for both the viscous term and the elliptic pressure update solve. Importantly, we employ an offset grid such that $\bfu$ takes values on cell centers, $p$ and $\psi$ take values on cell nodes, and the gradient and divergence operators, $\nabla^h$ and $\nabla^h\cdot$, take cell-centered fields to node-centered fields and vice-versa. The numerical boundary conditions for the pressure update solve follow from the finite element method formulation in \cite{almgren:1996}, ensuring the symmetry of $\Delta^h$ in the presence of Neumann boundary conditions on a node-centered grid. This results in a method that is fully second-order accurate in space and quite simple to implement with the use of standard symmetric elliptic solvers for the viscous and pressure linear systems.

\subsection{Temporal Jump Splice}
\label{sec:ns:temporal}

Before we can apply jump splice techniques to the projection method, we need to develop the final pieces of theory that will allow us to discretize quantities that depend on both space and time. In Section \ref{sec:ns:temporal:jumps}, we show that Proposition \ref{prop:jump-splice} can be adapted to differentiation in time without explicitly calculating jumps in the time derivatives. Then, in Section \ref{sec:ns:temporal:jump-operators}, we introduce the concept of a jump operator, which will allow us to determine appropriate jump extrapolations for the intermediate quantities $\bfu^*$ and $\psi$ in \eqref{eqn:projectionmethod} and \eqref{eqn:pressureupdate}.

\subsubsection{Temporal Jumps}
\label{sec:ns:temporal:jumps}

If a time-varying function $u : \Omega \times [0,T] \to \bbr$ is discontinuous in space across a moving interface $\Gamma$, it will in general also be discontinuous in time. As a result, the standard first-order temporal finite difference operator may not achieve its expected order of accuracy at grid points near the interface. However, there is a straightforward solution. 

Fix a grid point $\bfx_{i,j}$ and suppose that $\phi(\bfx_{i,j}, t_n) < 0$. A temporal discontinuity exists at $\bfx_{i,j}$ only when the interface $\Gamma$, across which $u$ has a spatial discontinuity, crosses $\bfx_{i,j}$. Let $v$ be the $q \geq 1$ jump extrapolation of $u$ from \eqref{eqn:v-definition}, where all quantities now depend on time. Because the outer splice $w^- = u - vH(\phi)$ is at least $LC^1(\Gamma_\epsilon)$ in space, it thus follows that $w^-$ is at worst $LC^1$ is time. Then by a standard jump splicing argument
\[(\partial_t u)(\bfx_{i,j},t_n) = \partial^h_t (u - v H(\phi))(\bfx_{i,j}, t_n) + \bigo{\Delta t}.\]
Note that here $\partial^h_t$ is the standard first-order forward difference operator in time. Conversely, if $\phi(\bfx_{i,j},t_n) > 0$, we use the inner splice and have
\[(\partial_t u)(\bfx_{i,j},t_n) = \partial^h_t (u + v (1 - H(\phi)))(\bfx_{i,j}, t_n) + \bigo{\Delta t},\]
Combining these expressions yields, for arbitrary $\bfx_{i,j}$,
\[(\partial_t u)(\bfx_{i,j},t_n) = (\partial^h_t u)(\bfx_{i,j},t_n) + (\partial^h_t v)(\bfx_{i,j},t_n) H(\phi(\bfx_{i,j},t_n)) - \partial^h_t(v H(\phi))(\bfx_{i,j}, t_n) + \bigo{\Delta t}. \]
This expression is just \eqref{eqn:discretization} from Proposition \ref{prop:jump-splice} with $D = \partial_t$ and $D_{p,q}^h = \partial_t^h$ (with $p = q = 1$), except that we never had to directly calculate the temporal jump conditions $\jump{\partial_t^i u}$, as they are implicitly determined from the spatial jumps encoded in $v$. We can simplify this expression further by writing $u^n_{i,j} = u(\bfx_{i,j},t_n)$ and similarly for $v$ and $\phi$, and then we have
\begin{equation}
\label{eqn:splicetime}
(\partial_t u)(\bfx_{i,j},t_n) = \frac{u_{i,j}^{n+1} - u^n_{i,j}}{\Delta t} - \left(\frac{H(\phi_{i,j}^{n+1}) - H(\phi_{i,j}^n)}{\Delta t}\right) v^{n+1}_{i,j} + \bigo{\Delta t}.
\end{equation}
This is the spliced temporal difference operator.

\subsubsection{Jump Operators}
\label{sec:ns:temporal:jump-operators}

We now introduce the notion of a jump operator, which generalizes the canonical jump extrapolation discussed in Section \ref{sec:splice:extrapolation}, and which which will in turn allow us to naturally determine appropriate jump extrapolations for the intermediate quantities in a time evolution equation. In particular, we will use jump operators in the next section to determine jump extrapolations for $\bfu^*$ and $\psi$ in \eqref{eqn:projectionmethod} and \eqref{eqn:pressureupdate}.

The mapping between a function $u$ with jump conditions $g^i = \jump{\partial_\bfn^i u}$ for $0 \leq i \leq q$ and its canonical jump extrapolation $v$, from \eqref{eqn:v-definition}, can be written as
\begin{equation}
J_q(u) = \bar g^0 + \bar g^1 \phi + \cdots + \frac{1}{q!} \bar g^q \phi^q,
\end{equation}
and we refer to $J_q$ as a jump operator. Jump operators are valuable because they are linear in their argument $u$. Suppose we have two functions $u_1$, $u_2$ with respective jump conditions $g_1^i = \jump{\partial_\bfn^i u_1}$ and $g_2^i = \jump{\partial_\bfn^i u_2}$. Then because jumps are linear, the function $u_1 + u_2$ has jump conditions $g_1^i + g_2^i = \jump{\partial_\bfn^i(u_1 + u_2)}$, and thus
\begin{align*}
J_k(u_1 + u_2) 
&= \overline{g_1^0 + g_2^0} + (\overline{g_1^1 + g_2^1}) \phi + \cdots + \frac{1}{q!}\left(\overline{g_1^q + g_2^q}\right)\phi^q \\
&= \bar g_1^0 + \bar g_1^1\phi + \cdots + \frac{1}{q!} \bar g_1^q \phi^q \\
&+ \bar g_2^0 + \bar g_2^1\phi + \cdots + \frac{1}{q!} \bar g_2^q \phi^q \\
&= J_q(u_1) + J_q(u_2),
\end{align*}
where we have used that the constant normal extrapolation of a sum is the sum of the constant normal extrapolations, that is, $\overline{g_1^i + g_2^i} = \bar g_1^i + \bar g_2^i$. By a similar argument, we have $J_q(c u_1) = c J_q(u_1)$ for any $c \in \bbr$. 

Additionally, jump operators commute, up to order $\bigo{\phi^q}$, with the gradient. That is, 
\begin{equation} 
\label{eqn:jumpopderiv}
J_q(\nabla u) = \nabla J_q(u) + \bigo{\phi^q}.
\end{equation}
To see this, note that because $J_q(u)$ satisfies the jump extrapolation conditions \eqref{eqn:v-conditions} in place of $v$, we have $\subbar{D(J_q(u))}{\Gamma} = \jump{Du}$ for any linear differential operator $D$ with highest derivatives of order less than or equal to $q$. In particular,
\[\subbar{\partial_\bfn^i \nabla J_q(u)}{\Gamma} = \jump{\partial_\bfn^i \nabla u}, \quad \text{for } 0 \leq i \leq q-1.\] 
Thus $\nabla J_q(u)$ satisfies the $0 \leq i \leq q-1$ jump conditions for $\nabla u$, and thus differs from $J_{q-1}(\nabla u)$ by at most $\bigo{\phi^q}$, in accordance with Proposition \ref{prop:v-equivalence}. As $J_q(\nabla u)$ and $J_{q-1}(\nabla u)$ also differ by a term of order $\bigo{\phi^q}$, \eqref{eqn:jumpopderiv} follows.

Finally, we note the useful relationship
\begin{equation}
\label{eqn:jumpopjump}
J_q(J_q(u) H(\phi)) = J_q(u),
\end{equation}
as $\jump{\partial_\bfn^i J_q(u) H(\phi)} = \subbar{\partial_\bfn^i J_q(u)}{\Gamma} = g^i$ for $0 \leq i \leq q$. 

Linearity, combined with \eqref{eqn:jumpopderiv} and \eqref{eqn:jumpopjump} allow the jumps of intermediate quantities in a jump evolution equation to be readily calculated. In particular, these relationships play a key role in deriving the jump spliced version of the approximate projection method, as will be demonstrated shortly.

\subsection{Jump Spliced Projection Method}
\label{sec:ns:projection-spliced}

Now we return to the projection method, given by equations \eqref{eqn:projectionmethod} and \eqref{eqn:pressureupdate}, and make the appropriate modifications to accommodate jumps induced by the singular force. 

First, let
\[ \bfv_\bfu = J_3(\bfu),\]
and
\[ v_p = J_3(p),\]
be the $q = 3$ jump extrapolations of $\bfu$ and $p$, respectively. These are constructed by appealing to the jump conditions given in \eqref{eqn:nsjumpconditions1} and \eqref{eqn:nsjumpconditions2}. We require $q = 3$ to achieve overall second-order accuracy in space when applying second-order differential operators, as discussed in Section \ref{sec:splice:splice}.

We use the level set method \cite{osher:1988, sethian:1999, osher:2002} to track the location of the interface. We have
\begin{equation}
\label{eqn:levelsetmethod}
\frac{\phi^{n+1} - \phi^n}{\Delta t} = -(\bfu^n \cdot \nabla) \phi^n.
\end{equation}
Because $\phi^{n+1}$ defined above will not, in general, be a signed distance function, we will need to reconstruct the signed distance function every time step.

Next, we adjust the temporal discretization of the Navier-Stokes equations in \eqref{eqn:projectionmethod} by adding temporal jump splicing, obtaining
\begin{subequations}
\label{eqn:timesplicedprojectionmethod}
\begin{align}
\label{eqn:timesplicedprojectionmethod-u1}
\frac{\bfu^{*} - \bfu^n}{\Delta t} &= -(\bfu^n \cdot \nabla) \bfu^n - \frac{1}{\rho}\nabla p^n + \frac{\mu}{\rho} \Delta \bfu^* \\
\label{eqn:timesplicedprojectionmethod-u2}
\frac{\bfu^{n+1} - \bfu^*}{\Delta t} &= -\frac{1}{\rho} \nabla \psi + \bfv_\bfu^{n+1} \left(\frac{H(\phi^{n+1}) - H(\phi^n)}{\Delta t}\right) \\
\label{eqn:timesplicedprojectionmethod-pressure}
\frac{p^{n+1} - p^n}{\Delta t} &= \frac{1}{\Delta t} \psi + v_p^{n+1}\left(\frac{H(\phi^{n+1}) - H(\phi^n)}{\Delta t}\right)
\end{align}
\end{subequations}
Note that \eqref{eqn:timesplicedprojectionmethod-u1} and \eqref{eqn:timesplicedprojectionmethod-u2} together constitute the discretization of a single temporal derivative of $\bfu$, and thus generate just one temporal splice correction. At this point, the jump conditions for $\bfu^{n+1}$ and $p^{n+1}$ are fully determined by \eqref{eqn:nsjumpconditions1} and \eqref{eqn:nsjumpconditions2}, so all that remains is to ascertain suitable jump conditions for the intermediate functions $\bfu^*$ and $\psi$. For this, we use jump operators.

In \eqref{eqn:timesplicedprojectionmethod}, there are two interfaces under consideration, $\Gamma^n = \Gamma(t_n)$ with signed distance function $\phi^n$ and $\Gamma^{n+1} = \Gamma(t_{n+1})$ with signed distance function $\phi^{n+1}$. As a result, there are two distinct jump operators, $J_3^n$ at time $t_n$ and $J_3^{n+1}$ at time $t_{n+1}$. Moreover, we have
\[ J^n_3(\bfu^n) = \bfv^n_\bfu, \qquad J^{n+1}_3(\bfu^n) = 0, \]
and likewise for $p$, as $J^n_3$ is only nonzero for quantities with explicitly defined jumps across the interface $\Gamma^n$. In practice, all quantities we consider will have discontinuities for only one of these two jump operators.

We apply $J^n_3$ to \eqref{eqn:timesplicedprojectionmethod-pressure}, obtaining
\[ \frac{J_3^n(p^{n+1}) - J_3^n(p^n)}{\Delta t} = \frac{1}{\Delta t} J_3^n(\psi) + \frac{1}{\Delta t} J_3^n(v_p^{n+1} H(\phi^{n+1})) - \frac{1}{\Delta t}J_3^n(v_p^{n+1} H(\phi^n)),\]
where we have made extensive use of the linearity of $J$. Using \eqref{eqn:jumpopjump} and the definition of $v_p$, this reduces to
\begin{equation}
\label{eqn:jump-phi-n}
J_3^n(\psi) = v_p^{n+1} - v_p^n,
\end{equation} 
and this determines the jump condition for $\psi$ across $\Gamma^n$. Next, we repeat the same process with $J^{n+1}_3$ and obtain
\begin{equation}
\label{eqn:jump-phi-n+1}
J_3^{n+1}(\psi) = 0.
\end{equation}
These equations fully determine the jump conditions for $\psi$ that will be imposed when we solve the pressure update equation \eqref{eqn:pressureupdate} and that will be utilized in accurately evaluating $\nabla \psi$ in \eqref{eqn:projectionmethod-1}.

We proceed similarly for $\bfu^*$ in \eqref{eqn:timesplicedprojectionmethod-u2}. Applying $J^n_3$ gives
\[ \frac{J_3^n(\bfu^{n+1}) - J_3^n(\bfu^*)}{\Delta t} = -\frac{1}{\rho} J_3^n(\nabla \psi) + \frac{1}{\Delta t} J_3^n(\bfv_\bfu^{n+1} H(\phi^{n+1})) - \frac{1}{\Delta t} J_3^n(\bfv_\bfu^{n+1} H(\phi^n)),\]
and using linearity, along with $\eqref{eqn:jumpopderiv}$, and neglecting terms of order $\bigo{\phi^3\Delta t}$, this reduces to
\begin{align}
\label{eqn:jump-u-n}
J_3^n(\bfu^*) 
&= \bfv_\bfu^{n+1} + \frac{\Delta t}{\rho} \nabla J_3^n(\psi) \nonumber \\ 
&= \bfv_\bfu^{n+1} + \frac{\Delta t}{\rho}\left( \nabla v_p^{n+1} - \nabla v_p^n\right)
\end{align} 
Applying $J^{n+1}_3$ and reducing then leads to
\begin{equation}
\label{eqn:jump-u-n+1}
J_3^{n+1}(\bfu^*) = 0.
\end{equation}
These equations fully determine the jump conditions for $\bfu^*$ that will be imposed when we solve the backward Euler update in eqref{eqn:projectionmethod-1}. 

All that remains is to determine the temporal-spliced version of the pressure update equation \eqref{eqn:pressureupdate}, which can easily be seen to be
\begin{equation}
\label{eqn:timesplicedprojectionsolve}
\Delta \psi = \frac{\rho}{\Delta t} \nabla \cdot \bfu^* + \rho (\nabla \cdot \bfv_\bfu^{n+1}) \left(\frac{H(\phi^{n+1}) - H(\phi^n)}{\Delta t}\right),
\end{equation}
where $\nabla \cdot \bfu^*$ will be evaluated and $\Delta \psi$ inverted using jump splice techniques informed by \eqref{eqn:jump-phi-n}, \eqref{eqn:jump-phi-n+1}, \eqref{eqn:jump-u-n}, and \eqref{eqn:jump-u-n+1}. 

\subsection{Spatial Discretization}
\label{sec:ns:spatial}

Following the lead of the previous section, we now numerically approximate the spatial derivatives in \eqref{eqn:levelsetmethod} and \eqref{eqn:timesplicedprojectionmethod}.

For the evolution of the interface, we use the second-order ENO method described in \cite{osher:2002}. That is,
\begin{equation}
\label{eqn:levelsetmethod-eno}
\phi^{n+1} = \phi^n - \Delta t \, \operatorname{ENO}(\bfu^n, \phi^n).
\end{equation}
As discussed in the previous section, we must reconstruct $\phi^{n+1}$ into a signed distance function every time step, and for this we use the fifth-order accurate closest point method from \cite{saye:2014}.

Next, we make repeated use of \eqref{eqn:discretization} from Proposition \ref{prop:jump-splice} and discretize \eqref{eqn:timesplicedprojectionmethod-u1} as
\begin{align}
\label{eqn:ustar-discretized}
\left(I - \frac{\mu\Delta t}{\rho} \Delta^h\right)\bfu^* &= \bfu^n - \Delta t\, \text{JENO}(\bfu^n,\bfu^n) - \frac{\Delta t}{\rho}\biggl(\nabla^h p^n+ (\nabla^h v_p^n) H(\phi^n) - \nabla^h(v_p^n H(\phi^n)\biggr)\\
&+ \frac{\mu\Delta t}{\rho}\biggl( (\Delta^h \bfv_*) H(\phi^n) - \Delta^h (\bfv_* H(\phi^n))\biggr) \nonumber,
\end{align}
where $\Delta^h$ is the standard five-point Laplacian, $\text{JENO}$ refers to the second-order jump-spliced ENO method (see below), $\nabla^h$ is the node-to-cell-centered grid second-order finite difference gradient operator, and
\begin{equation}
\label{eqn:v-star-numeric}
\bfv_* = J_3^n(\bfu^*) = \bfv_\bfu^{n+1} + \frac{\Delta t}{\rho}\left( \nabla^h v_p^{n+1} - \nabla^h v_p^n\right),
\end{equation} 
as given by \eqref{eqn:jump-u-n} in the previous section. We enforce $\subbar{\bfu^*}{\partial\Omega} = 0$. All quantities on the right-hand side of \eqref{eqn:ustar-discretized} are known from data at time $t_n$, so a straightforward symmetric solve is all that is required to obtain $\bfu^*$. 

Because ENO is inherently nonlinear, we cannot appeal to \eqref{eqn:discretization} to obtain a jump-spliced adjustment. Instead, we calculate jump-spliced ENO (JENO) by applying standard second-order ENO, as given in \cite{osher:2002}, to $\bfu^n$, $\bfu^n - \bfv_\bfu^n H(\phi^n)$, and $\bfu^n + \bfv_\bfu^n (1 - H(\phi^n))$ at points $\bfx$ with $|\phi^n(\bfx)| > 2h$, $-2h < \phi^n(\bfx) < 0$, and $0 < \phi^n(\bfx) < 2h$, respectively, where $2h$ comes from the maximum stencil width of second-order ENO. In other words, we must apply ENO to the inner and outer splices directly, instead of being able to invoke \eqref{eqn:discretization}.

Next, we discretize \eqref{eqn:timesplicedprojectionsolve} as
\begin{align}
\label{eqn:pressureupdate-discretized}
\Delta^h \psi &= \frac{\rho}{\Delta t} \biggl(\nabla^h \cdot \bfu^* + (\nabla^h \cdot \bfv_*) H(\phi^n) - \nabla^h \cdot (\bfv_* H(\phi^n))\biggr) \\
&+ \rho (\nabla \cdot \bfv_\bfu^{n+1}) \left(\frac{H(\phi^{n+1}) - H(\phi^n)}{\Delta t}\right) \nonumber \\
&-\biggl((\Delta^h v_\psi) H(\phi^n) - \Delta^h (v_\psi H(\phi^n))\biggr) \nonumber,
\end{align}
where
\begin{equation}
\label{eqn:v-phi-numeric}
v_\psi = J_3^n(\psi) = v_p^{n+1} - v_p^n,
\end{equation}
as in \eqref{eqn:jump-phi-n}. Note that $\nabla^h\cdot$ is the cell-to-node-centered grid second-order finite difference divergence operator. Here we enforce the $\subbar{\nabla \psi \cdot \bfnu}{\partial \Omega} = 0$ boundary condition through the finite element formulation from \cite{almgren:1996}, which ensures the symmetry of $\Delta^h$. This is then a straightforward symmetric solve, and can be accomplished quickly with multigrid.

Finally, we determine $\bfu^{n+1}$ and $p^{n+1}$ with
\begin{equation}
\label{eqn:u-discretized}
\bfu^{n+1} = \bfu^* - \frac{\Delta t}{\rho}\biggl(\nabla^h \psi + (\nabla^h v_\psi)H(\phi^n) - \nabla^h(v_\psi H(\phi^n))\biggr) + \bfv_\bfu^{n+1}\left(H(\phi^{n+1}) - H(\phi^n)\right),
\end{equation} 
and
\begin{equation}
\label{eqn:p-discretized}
p^{n+1} = p^n + \psi + v_p^{n+1}\left(H(\phi^{n+1}) - H(\phi^n)\right).
\end{equation}
This method is straightforward to implement owing to the need for only standard symmetric positive-definite elliptic solvers, and is fully second-order accurate in space, as will be demonstrated numerically.

\subsection{Surface Tension}
\label{sec:ns:surface-tension}

Having developed fully second-order accurate discretizations of the singular force Navier-Stokes equations, we now restrict our attention to a particular type of singular forcing, namely surface tension. In this case, the singular force term takes the form
\[ \bff = -\sigma \kappa \bfn, \]
where $\sigma$ is the surface tension coefficient and $\kappa = \nabla \cdot \bfn$ is the mean curvature. In particular, we have $\bff_s = 0$ and $f_\bfn = -\sigma\kappa$. The jump conditions \eqref{eqn:nsjumpconditions1} and \eqref{eqn:nsjumpconditions2} become
\begin{equation}
\label{eqn:nsjumpconditions1-st}
\begin{aligned}[c]
\jump{\bfu} &= 0  \\
\jump{\partial_\bfn \bfu} &= 0
\end{aligned}
\qquad
\begin{aligned}[c]
\jump{p} &= f_\bfn\\
\jump{\partial_\bfn p} &= 0,
\end{aligned}
\end{equation}
and
\begin{equation}
\label{eqn:nsjumpconditions2-st}
\begin{aligned}[c]
\jump{\Delta \bfu } 
&= \frac{1}{\mu} \nabla_s f_\bfn  \\
\jump{\partial_\bfn \Delta \bfu} 
&= -\frac{1}{\mu}\biggl((\Delta_s f_\bfn)\bfn + \nabla_s f_\bfn \cdot \nabla \bfn\biggr) \\
\jump{\Delta p} &= 0 \\
\jump{\partial_\bfn \Delta p} &= -\frac{2\rho}{\mu}\biggl(\bfn \cdot \nabla\bfu \cdot \nabla_s f_\bfn\biggr)
\end{aligned}
\end{equation}

\subsection{Implementation of Singular Navier-Stokes for Surface Tension}
\label{sec:ns:implementation}

For the case of surface tension discussed in the previous section, we now describe the entire algorithm in full. We use a staggered grid, with $\bfu^n_{i,j}$ and $\phi^n_{i,j}$ defined on cell centers (cell-centered) and $p^n_{i,j}$ defined on cell nodes (node-centered). We will describe how these quantities at time $t_{n+1}$ are determined in a series of steps. Here we will write $\phi^n$ to denote a function with zero level set equal to $\Gamma^n$, but which may not be a signed distance function. We will write $\tilde \phi^n$ to denote the reconstruction of $\phi^n$ into a signed distance function. Furthermore, $\phi^n$ will in general only be defined in a band of width $b = 16h$ around $\Gamma^n$ for the sake of computational efficiency, as developed in \cite{adalsteinsson:1995}.

\begin{enumerate}
\item First, we use $\bfu^n$ to evolve the interface in accordance with \eqref{eqn:levelsetmethod-eno}, obtaining $\phi^{n+1}$. We do not yet reconstruct $\phi^{n+1}$ into a signed distance function.

\item Next, we form banded (width $b = 16h$) cell-centered signed distance functions $\tilde \phi^n$ and $\tilde \phi^{n+1}$ from $\phi^n$ and $\phi^{n+1}$, respectively, using the fifth-order closest point method from \cite{saye:2014}. At the same time, we also form node-centered signed distance functions $\tilde \phi^n_N$ and $\tilde \phi^{n+1}_N$, using the same technique. Achieving a high degree of fidelity in the signed distance function is essential to calculating $\kappa$ accurately, and fifth order accurate reconstruction is strictly necessary.

\item Because $\phi^{n+1}$ is defined on a band, it must be reconstructed frequently. Every 16 time steps, we overwrite $\phi^{n+1}$ with its corresponding signed distance function $\tilde\phi^{n+1}$. For more details on the choice of reconstruction frequency, see \cite{sethian:1999}.

\item Using $\tilde \phi^n_N$ and $\tilde \phi^{n+1}_N$, we calculate $\kappa^n$ and $\kappa^{n+1}$, both node-centered. Because we are using signed distance functions, we can simply compute
\[\kappa^n = \Delta^h_4 \tilde\phi^n_N, \]
and likewise for $\kappa^{n+1}$, recalling that $\Delta^h_4$ is the fourth-order accurate Laplacian defined in \eqref{eqn:laplacian-4}.

\item With curvature in hand, we form $f_\bfn^n = -\sigma\kappa^n$ and $f_\bfn^{n+1} = -\sigma \kappa^{n+1}$, again both defined on cell nodes.

\item We can now calculate the jumps in $\bfu$ and $p$. Using \eqref{eqn:nsjumpconditions1-st} and \eqref{eqn:nsjumpconditions2-st}, we have, for $\bfu^k$, where $k = n, n+1$,
\begin{equation}
\label{eqn:u-jumps-numeric}
\begin{aligned}
\bfg^{0}_{\bfu^k} &= 0 \\
\bfg^{1}_{\bfu^k} &= 0 \\
\bfg^{\Delta}_{\bfu^k} &= \frac{1}{\mu}\left(\nabla^h f_\bfn^k - \left(\nabla^h f_\bfn^k \cdot \bfn^k\right)\bfn^k\right) \\
\bfg^{\partial_\bfn \Delta}_{\bfu^k} &= - \nabla^h \cdot \bfg^{\Delta}_{\bfu^k} - \bfg^{\Delta}_{\bfu^k} \cdot \nabla^h \bfn^k,
\end{aligned}
\end{equation}
where here $\nabla^h$ denotes the appropriate (cell-cell or node-cell) second-order centered finite difference operator and $\bfn^k = \nabla^h \tilde \phi^k$ is defined at cell centers. Similarly, for $p^k$,
\begin{equation}
\label{eqn:p-jumps-numeric}
\begin{aligned}
g^{0}_{p^k} &= f_\bfn^k \\
g^{1}_{p^k} &= 0 \\
g^{\Delta}_{p^k} &= 0 \\
g^{\partial_\bfn\Delta}_{p^k} &= - 2\rho (\bfn_N^k \cdot \nabla^h \bfu^k \cdot \bfg^{\Delta}_{\bfu^k}),
\end{aligned}
\end{equation}
where $\bfn_N^k = \nabla^h \tilde \phi^k_N$ is now defined on cell nodes and $\nabla^h$ here represents the cell-node second-order finite difference operator. In \eqref{eqn:p-jumps-numeric}, $\bfg^{\Delta}_{\bfu^k}$ is calculated on cell nodes by interpolation from cell centers. 

With \eqref{eqn:u-jumps-numeric} and \eqref{eqn:p-jumps-numeric} in hand, we can now use the techniques from Section \ref{sec:splice} to compute $\bfv^k_\bfu = J_3^k(\bfu)$, defined at cell centers, and $v^k_p = J_3^k(p)$, defined at cell nodes, both for $k = n, n+1$.

\item Next, we need to construct $\bfv_* = J_3^n(\bfu^*)$ and $v_\psi = J_3^n(\psi)$. We do this by appealing to \eqref{eqn:v-star-numeric} and \eqref{eqn:v-phi-numeric}.

\item Finally, we can proceed with the jump-spliced approximate projection method. We solve \eqref{eqn:ustar-discretized} for $\bfu^*$ using either conjugate gradients or multigrid. Then we solve \eqref{eqn:pressureupdate-discretized} for $\psi$ using multigrid. Finally, we construct $\bfu^{n+1}$ and $p^{n+1}$ in accordance with \eqref{eqn:u-discretized} and \eqref{eqn:p-discretized}.
\end{enumerate}

\subsection{Results}
\label{sec:ns:results}

We have performed extensive analysis on the convergence behavior of the jump-spliced singular Navier-Stokes equations with surface tension, and two examples are presented below. In all of the following, we take our domain to be $\Omega = [0,1]^2$.

In the following examples, we look at four different metrics of convergence: velocity, pressure, interface, and volume convergence. We perform grid convergence in velocity and pressure and in the position of the interface as no exact solution is known for the examples below. 

To determine the errors in velocity and pressure, we evaluate
\begin{equation}
\label{eqn:u-error}
E_\bfu^h = \|\bfu^h - \bfu^{2h} \|_{\infty, \infty},
\end{equation}
and
\begin{equation}
\label{eqn:p-error}
E_p^h = \|p^h - p^{2h} \|_{\infty, \infty}, 
\end{equation}
where $\bfu^h$ and $p^h$ are the velocity and pressure with grid spacing $h$. Here $\|\cdot \|_{\infty, \infty}$ denotes the $L^\infty$ norm in both space and time. Because $\bfu$ is cell-centered, and cell-centered grids at different resolutions do not share points in common, we use second-order accurate interpolation to calculate \eqref{eqn:u-error}. This is justified in the case of surface tension, as $\jump{\bfu} = \jump{\partial_\bfn \bfu} = 0$, and thus $\bfu \in LC^1(\Omega)$. 

In the examples below, $p$ is discontinuous across the interface, which can result in spurious values of \eqref{eqn:p-error} when the interface lies on opposite sides of a grid point at two different grid resolutions. To account for this effect, if for a grid point $\bfx_{i,j}$ we have $\tilde \phi^h(\bfx_{i,j}) > 0$ and $\tilde \phi^{2h}(\bfx_{i,j}) < 0$ or vice-versa, we exclude the point $\bfx_{i,j}$ from the calculation \eqref{eqn:p-error}. This exclusion is necessary for only a small fraction of points within a distance $h$ of the interface, and thus our results still account for convergence behavior arbitrarily close to discontinuities.

For the error in the position of the interface, we evaluate
\begin{equation}
\label{eqn:phi-error}
E_{\phi}^h = \|\tilde \phi^{h} - \tilde \phi^{2h} \|_{\infty, \infty},
\end{equation}
where $\tilde \phi^h$ is the signed distance function calculated with grid spacing $h$. This metric is almost identical to \eqref{eqn:u-error} except that the difference $\tilde \phi^h - \tilde \phi^{2h}$ is only evaluated in the band on which $\tilde \phi$ is defined.

Finally, we calculate error in volume as
\begin{equation}
E_{\rm Vol}^h = \|\text{Vol}(\Gamma^h) - V_0 \|_\infty,
\end{equation}
where $V_0$ is the initial volume of $\Omega^+$ at time $t = 0$ and $\text{Vol}(\Gamma^h)$ is computed from $\tilde \phi^h$ to fourth-order accuracy at each time point using techniques from Section \ref{sec:integration}. Here $\|\cdot \|_\infty$ denotes the $L^\infty$ norm in time. Note that the fluid flow is incompressible, so volume should be conserved.

\begin{enumerate}[label=\bfseries Example \thesection.\arabic*.\ , align=left, leftmargin=0cm, itemindent=0cm, labelwidth=0cm, labelsep=0cm, ref=\thesection.\arabic*]

\item \label{ex:ns-1} We solve the Navier-Stokes equations with surface tension. We take the initial interface $\Gamma$ to be an ellipse centered at $(0.5, 0.5)$ with semi-principal axes $\mathbf{R} = (0.35, 0.15)$ and set $\rho = 1$, $\mu = 0.1$, and $\sigma = 1$. This gives $\text{Re} = 10$ for the Reynolds number. To show that the method is second-order in space, we employ a time step of $\Delta t = h^2$. The solution is computed to final time $T = 0.5$.

We use the jump splice methodology outlined in the previous section, and compare our results to the traditional approach of using smoothed $\delta$ functions to represent surface tension; see \cite{peskin:2002, brackbill:1992, sussman:1999}. More precisely, we compare to using the unspliced approximate projection method with bulk forcing term
\[\mathbf{st} = -\sigma\kappa \bfn \delta^\epsilon(\phi), \]
where $\kappa = \nabla\cdot\bfn$, and
\[\delta^\epsilon(\alpha) = \frac{1}{2\epsilon}\left(1 + \cos\left(\frac{\pi \alpha}{\epsilon}\right)\right), \]
is a smoothed approximation of the Dirac $\delta$. In the following tests, we take $\epsilon = 2h$, which is a standard choice.

\begin{table}[ht]
\centering
\begin{tabular}{rccccccccc}
  \toprule
& \multicolumn{4}{c}{$\delta^{2h}$} & \phantom{abc} & \multicolumn{4}{c}{Jump Splice} \\
  \cmidrule{2-5} \cmidrule{7-10}
$n$ & $E_\bfu$ & Rate & $E_p$ & Rate && $E_\bfu$ & Rate & $E_p$ & Rate \\
  \midrule
128     & 1.86\e{-2} &     & 2.79\e{-0} &      && 7.77\e{-2} &      & 6.15\e{-0}  &     \\
256     & 1.00\e{-2} & 0.9 & 2.79\e{-0} & 0.0  && 4.53\e{-3} & 4.1  & 2.10\e{-1} & 4.9  \\
512     & 3.81\e{-3} & 1.4 & 2.93\e{-0} & -0.1 && 1.27\e{-3} & 1.8  & 1.12\e{-1} & 0.9  \\
1024    & 2.05\e{-3} & 0.9 & 2.69\e{-0} & 0.1  && 3.45\e{-4} & 1.9  & 3.48\e{-2} & 1.7  \\
  \bottomrule
\end{tabular}
\caption{For Example \ref{ex:ns-1}, between-grid errors in the velocity ($E_\bfu$) and the pressure ($E_p$) for smoothed $\delta^{2h}$ as well as jump splice.}
\label{tab:ns-example1-grid}
\end{table}

\begin{table}[ht]
\centering
\begin{tabular}{rccccc}
  \toprule
& \multicolumn{2}{c}{$\delta^{2h}$} & \phantom{abc} & \multicolumn{2}{c}{Jump Splice} \\
  \cmidrule{2-3} \cmidrule{5-6}
$n$ & $E_{\phi}$ & Rate && $E_{\phi}$ & Rate \\
  \midrule
128     & 4.63\e{-4} &      && 4.09\e{-4} &      \\
256     & 9.31\e{-5} & 2.3  && 5.88\e{-5} & 2.8  \\
512     & 2.39\e{-5} & 2.0  && 1.49\e{-5} & 2.0  \\
1024    & 9.58\e{-6} & 1.3  && 3.70\e{-6} & 2.0  \\
  \bottomrule
\end{tabular}
\caption{For Example \ref{ex:ns-1}, between-grid errors in the interface ($E_\phi$) for smoothed $\delta^{2h}$ as well as jump splice.}
\label{tab:ns-example1-int}
\end{table}

\begin{table}[ht]
\centering
\begin{tabular}{rccccc}
  \toprule
& \multicolumn{2}{c}{$\delta^{2h}$} & \phantom{abc} & \multicolumn{2}{c}{Jump Splice} \\
  \cmidrule{2-3} \cmidrule{5-6}
$n$ & $E_{\rm Vol}$ & Rate && $E_{\rm Vol}$ & Rate \\
  \midrule
64      & 2.55\e{-4} &      && 2.42\e{-4} &      \\
128     & 7.85\e{-5} & 1.7  && 6.25\e{-5} & 2.0  \\
256     & 3.22\e{-5} & 1.3  && 1.61\e{-5} & 2.0  \\
512     & 1.42\e{-5} & 1.2  && 3.99\e{-6} & 2.0  \\
1024    & 6.61\e{-6} & 1.1  && 1.03\e{-6} & 2.0  \\
  \bottomrule
\end{tabular}
\caption{For Example \ref{ex:ns-1}, error in volume of the interface ($E_{\rm Vol}$) for smoothed $\delta^{2h}$ as well as jump splice.}
\label{tab:ns-example1-vol}
\end{table}

\begin{figure}[ht]
\centering
\begin{tabular}{ccc}
\includegraphics[width=0.3\linewidth]{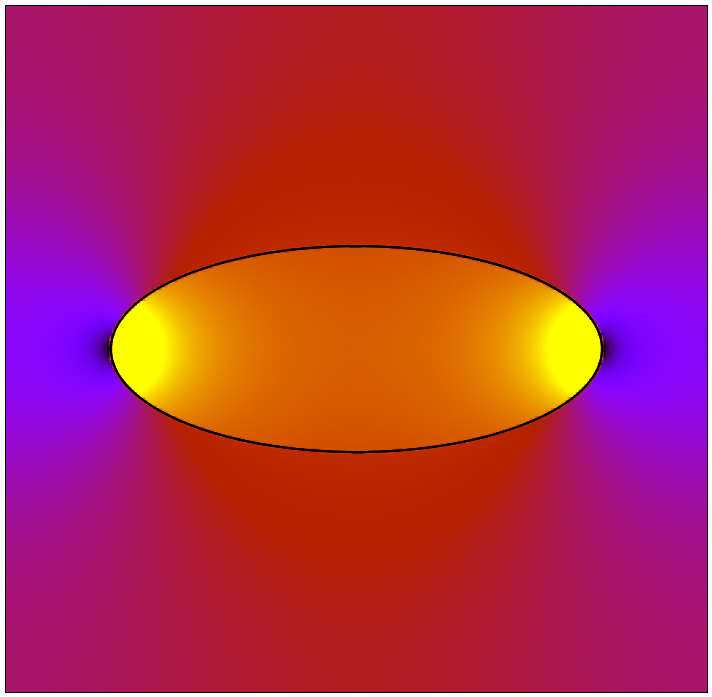} 
&
\includegraphics[width=0.3\linewidth]{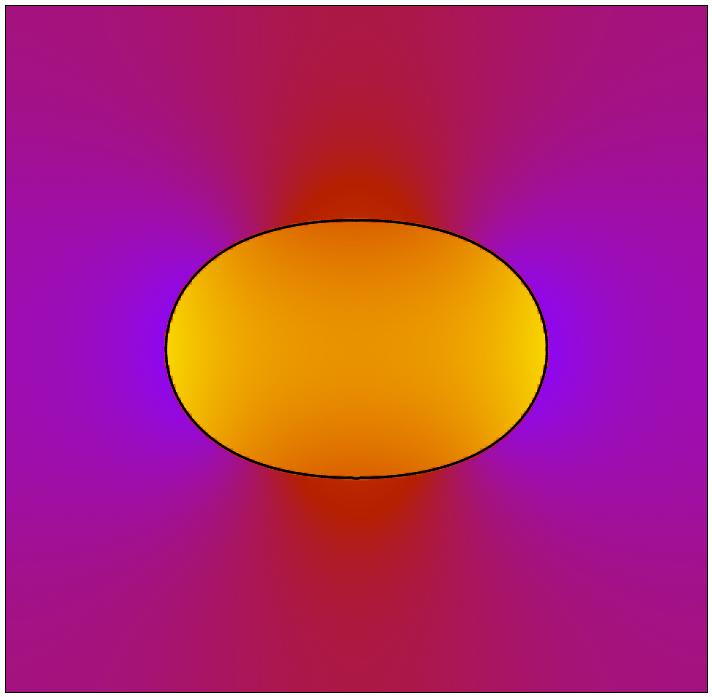} 
&
\includegraphics[width=0.3\linewidth]{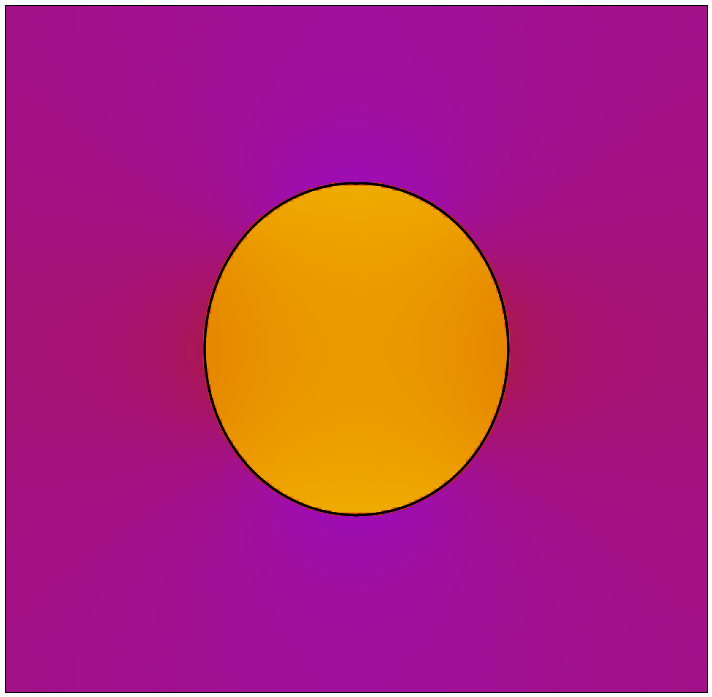} 
\\
$T = 0$ & $T = 0.125$ & $T = 0.25$ 
\end{tabular}
\begin{tabular}{cc}
\includegraphics[width=0.3\linewidth]{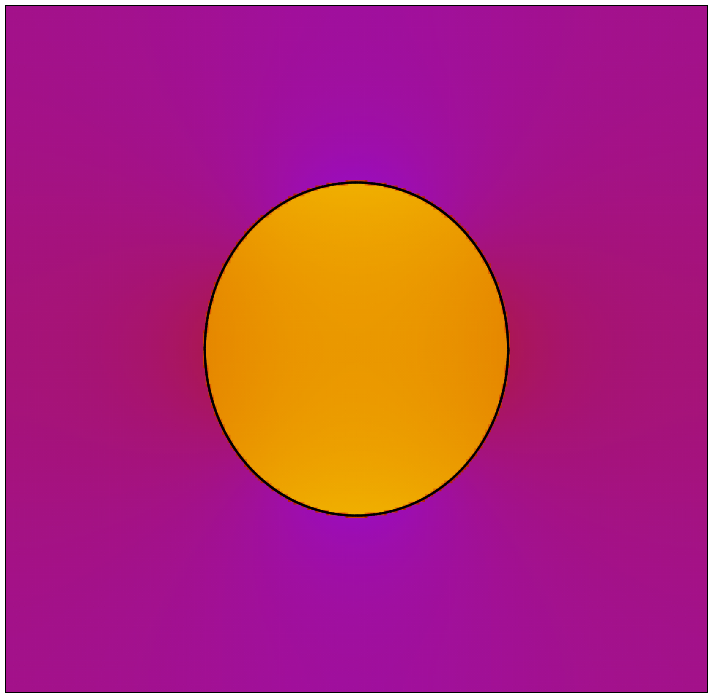} 
&
\includegraphics[width=0.3\linewidth]{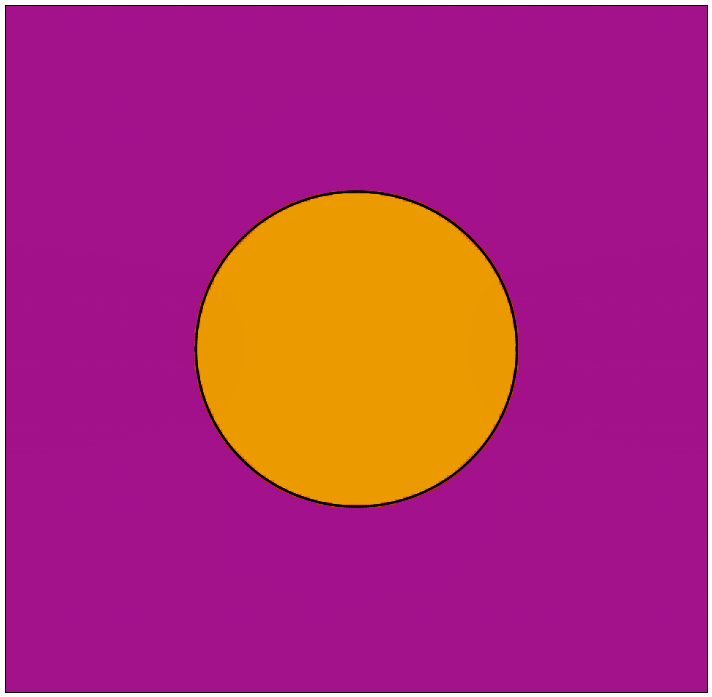}
\\
$T = 0.375$ & $T = 0.5$
\end{tabular}
\includegraphics[width=0.8\linewidth]{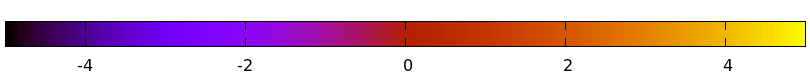}
\caption{Evolution of the interface $\Gamma$ (bold line) and the pressure $p$ in Example \ref{ex:ns-1} on a $256 \times 256$ grid. $\text{Re} = 10$.}
\label{fig:ns-example1}
\end{figure}

Convergence results are shown in Tables \ref{tab:ns-example1-grid}, \ref{tab:ns-example1-int}, and \ref{tab:ns-example1-vol} and Figure \ref{fig:ns-example1} shows the evolution of the interface overlaid on a visual representation of the pressure $p$.

\item \label{ex:ns-2} We repeat Example \ref{ex:ns-1} but with an order of magnitude less viscosity. Now $\mu = 0.01$ and thus $\text{Re} = 100$. Convergence results are shown in Tables \ref{tab:ns-example2-grid}, \ref{tab:ns-example2-int}, and \ref{tab:ns-example2-vol} and Figure \ref{fig:ns-example2} shows the evolution of $\Gamma$ and $p$.

\begin{table}[ht]
\centering
\begin{tabular}{rccccccccc}
  \toprule
& \multicolumn{4}{c}{$\delta^{2h}$} & \phantom{abc} & \multicolumn{4}{c}{Jump Splice} \\
  \cmidrule{2-5} \cmidrule{7-10}
$n$ & $E_\bfu$ & Rate & $E_p$ & Rate && $E_\bfu$ & Rate & $E_p$ & Rate \\
  \midrule
128     & 9.31\e{-2} &      & 2.97\e{-0} &       && 9.96\e{-2} &      & 8.78\e{-0} &      \\
256     & 6.18\e{-2} & 0.6  & 2.83\e{-0} & 0.1   && 1.67\e{-2} & 2.6  & 2.65\e{-0} & 1.7  \\
512     & 3.24\e{-2} & 0.9  & 3.01\e{-0} & -0.1  && 4.22\e{-3} & 2.0  & 6.05\e{-1} & 2.1  \\
1024    & 1.69\e{-2} & 0.9  & 2.65\e{-0} & 0.2   && 1.15\e{-3} & 1.9  & 2.78\e{-1} & 1.1  \\
  \bottomrule
\end{tabular}
\caption{For Example \ref{ex:ns-2}, between-grid errors in the velocity ($E_\bfu$) and the pressure ($E_p$) for smoothed $\delta^{2h}$ as well as jump splice.}
\label{tab:ns-example2-grid}
\end{table}

\begin{table}[ht]
\centering
\begin{tabular}{rccccc}
  \toprule
& \multicolumn{2}{c}{$\delta^{2h}$} & \phantom{abc} & \multicolumn{2}{c}{Jump Splice} \\
  \cmidrule{2-3} \cmidrule{5-6}
$n$ & $E_{\phi}$ & Rate && $E_{\phi}$ & Rate \\
  \midrule
128     & 2.65\e{-3} &      && 1.96\e{-3} &      \\
256     & 7.98\e{-4} & 1.7  && 5.01\e{-4} & 2.0  \\
512     & 2.50\e{-4} & 1.7  && 1.27\e{-4} & 2.0  \\
1024    & 8.16\e{-5} & 1.6  && 3.21\e{-5} & 2.0  \\
  \bottomrule
\end{tabular}
\caption{For Example \ref{ex:ns-2}, between-grid errors in the interface ($E_\phi$) for smoothed $\delta^{2h}$ as well as jump splice.}
\label{tab:ns-example2-int}
\end{table}

\begin{table}[ht]
\centering
\begin{tabular}{rccccc}
  \toprule
& \multicolumn{2}{c}{$\delta^{2h}$} & \phantom{abc} & \multicolumn{2}{c}{Jump Splice} \\
  \cmidrule{2-3} \cmidrule{5-6}
$n$ & $E_{\rm Vol}$ & Rate && $E_{\rm Vol}$ & Rate \\
  \midrule
64      & 1.52\e{-3} &      && 1.82\e{-3} &      \\
128     & 6.71\e{-4} & 1.2  && 4.73\e{-4} & 2.0  \\
256     & 3.00\e{-4} & 1.2  && 1.20\e{-4} & 2.0  \\
512     & 1.39\e{-4} & 1.1  && 3.03\e{-5} & 2.0  \\
1024    & 6.65\e{-5} & 1.1  && 7.64\e{-6} & 2.0  \\
  \bottomrule
\end{tabular}
\caption{For Example \ref{ex:ns-2}, error in volume of the interface ($E_{\rm Vol}$) for smoothed $\delta^{2h}$ as well as jump splice.}
\label{tab:ns-example2-vol}
\end{table}

\begin{figure}[ht]
\centering
\begin{tabular}{ccc}
\includegraphics[width=0.3\linewidth]{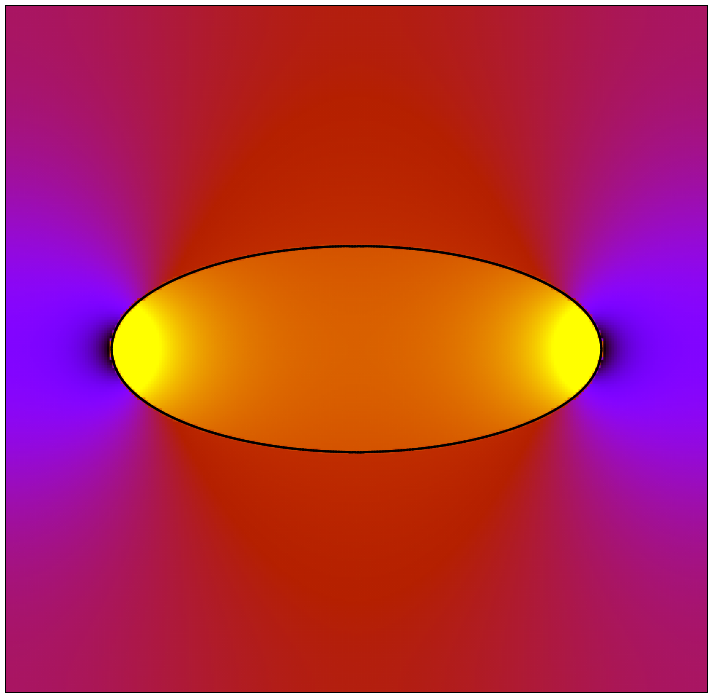} 
&
\includegraphics[width=0.3\linewidth]{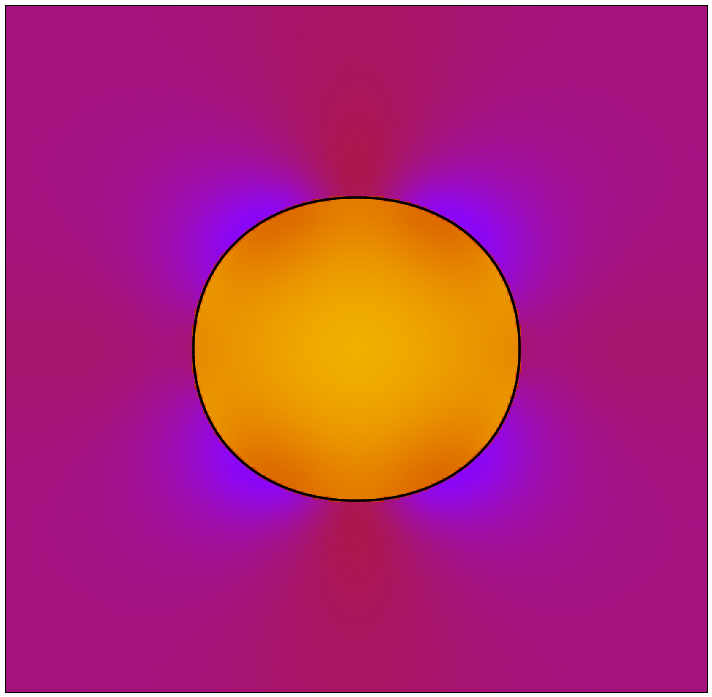} 
&
\includegraphics[width=0.3\linewidth]{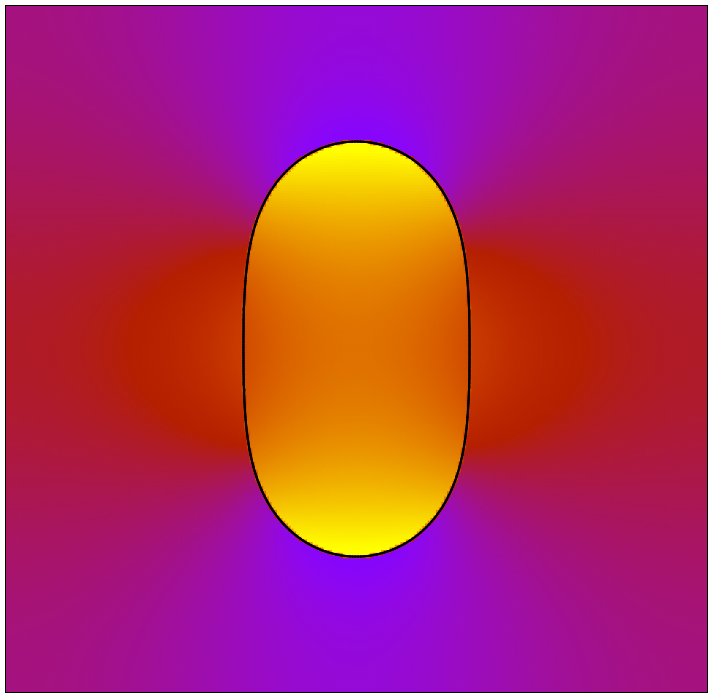} 
\\
$T = 0$ & $T = 0.125$ & $T = 0.25$ 
\end{tabular}
\\
\begin{tabular}{cc}
\includegraphics[width=0.3\linewidth]{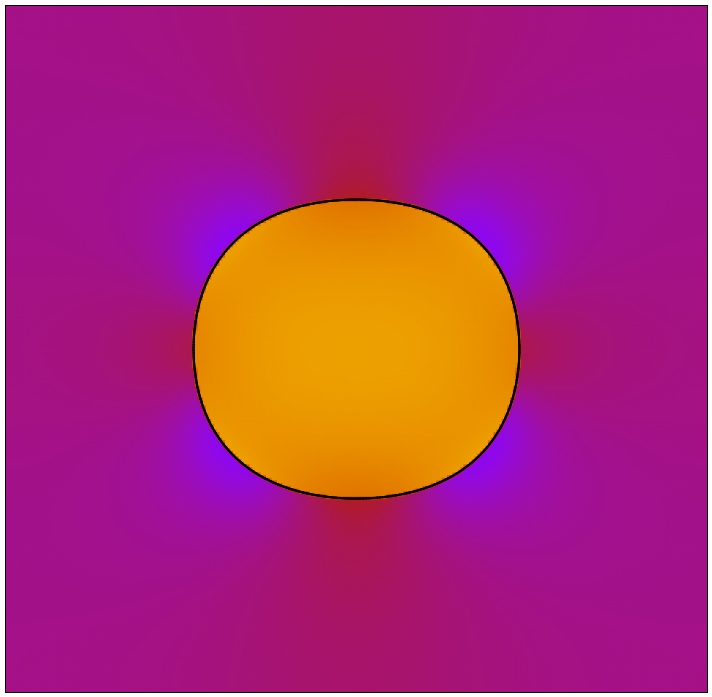} 
&
\includegraphics[width=0.3\linewidth]{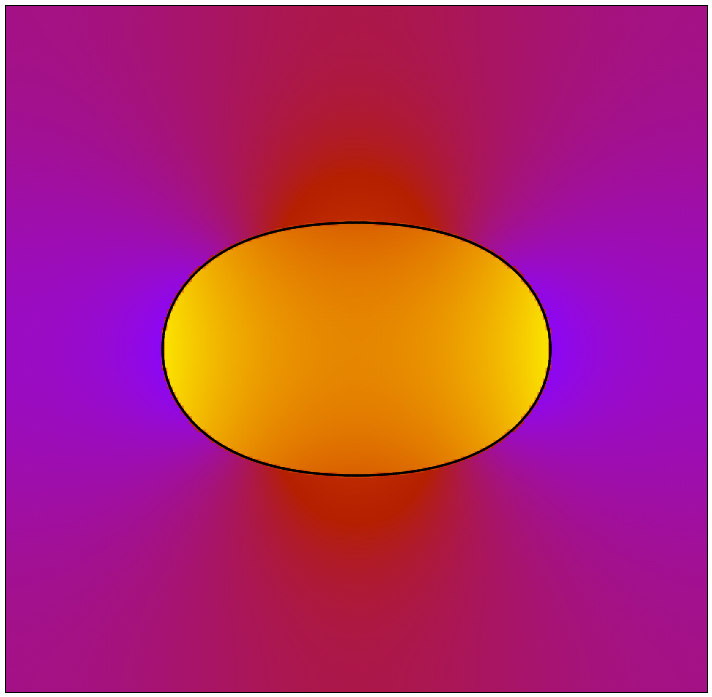}
\\
$T = 0.375$ & $T = 0.5$
\end{tabular}
\includegraphics[width=0.8\linewidth]{figures/bar.png}
\caption{Evolution of the interface $\Gamma$ (bold line) and the pressure $p$ in Example \ref{ex:ns-2} on a $256 \times 256$ grid. $\text{Re} = 100$.}
\label{fig:ns-example2}
\end{figure}

\end{enumerate}

\subsection{Discussion}
\label{sec:ns:discussion}

Examples 1 and 2 above clearly establish second-order convergence in space in velocity, interface position, and volume conservation, with evidence for order $1.5$ convergence in pressure. The traditional smoothed $\delta$ approach, by comparison, shows no convergence in pressure, at best first-order accuracy in velocity and volume, with ambiguously second-order convergence in the position of the interface. On the relatively coarse $256 \times 256$ grid, jump splice methods achieve errors that are 2--4 times smaller than those seen with $\delta^{2h}$.

\begin{figure}[ht]
\centering
\begin{tabular}{cc}
\includegraphics[width=0.4\linewidth]{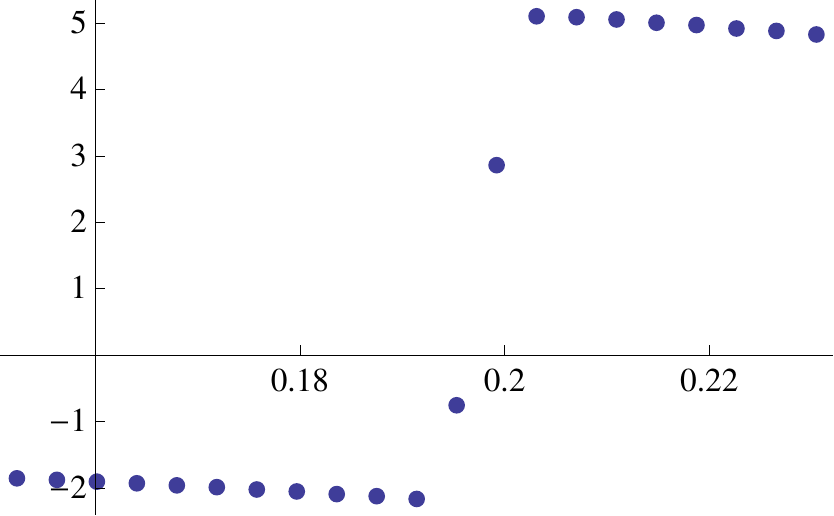} 
&
\includegraphics[width=0.4\linewidth]{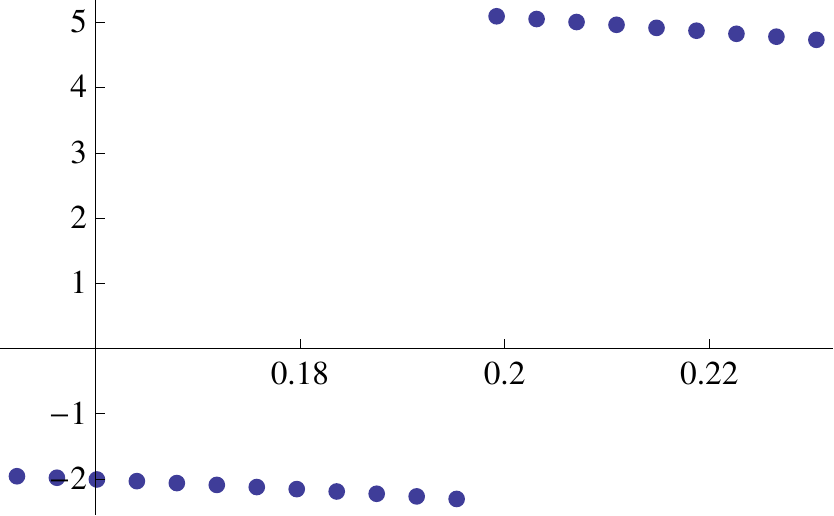} 
\\
$\delta^{2h}$ & Jump Splice
\end{tabular}
\caption{Visualization of pressure near the interface in Example \ref{ex:ns-2} at $T = 0.25$ along the line $x = 0.5$. Jump splicing accurately captures the sharp discontinuity in pressure, whereas use of $\delta^{2h}$ results in artificial smoothing. Results are from simulation on $256 \times 256$ grid.}
\label{fig:ns-pressure-jump}
\end{figure}

Beyond basic convergence properties, the jump splice achieves greater fidelity with respect to the physical formulation of the problem. Figure \ref{fig:ns-pressure-jump} shows $x = 0.5$ cross-sections of pressure near the interface at $T = 0.25$ from Example \ref{ex:ns-2} for both smoothed $\delta$ and jump splice approaches. The jump splice correctly captures a sharp discontinuity in pressure, whereas the $\delta^{2h}$ approach leads to artificial smoothing of the discontinuity.

\begin{figure}[ht]
\centering
\begin{tabular}{cc}
\includegraphics[width=0.4\linewidth]{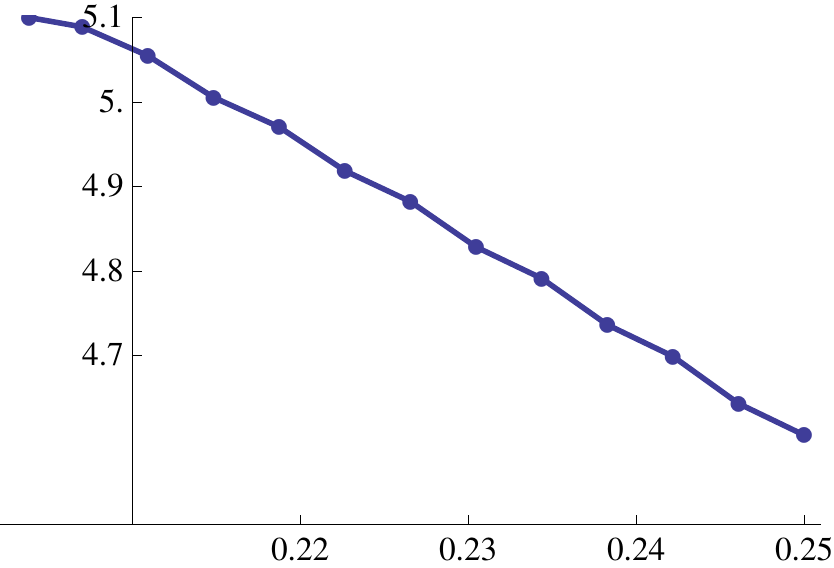} 
&
\includegraphics[width=0.4\linewidth]{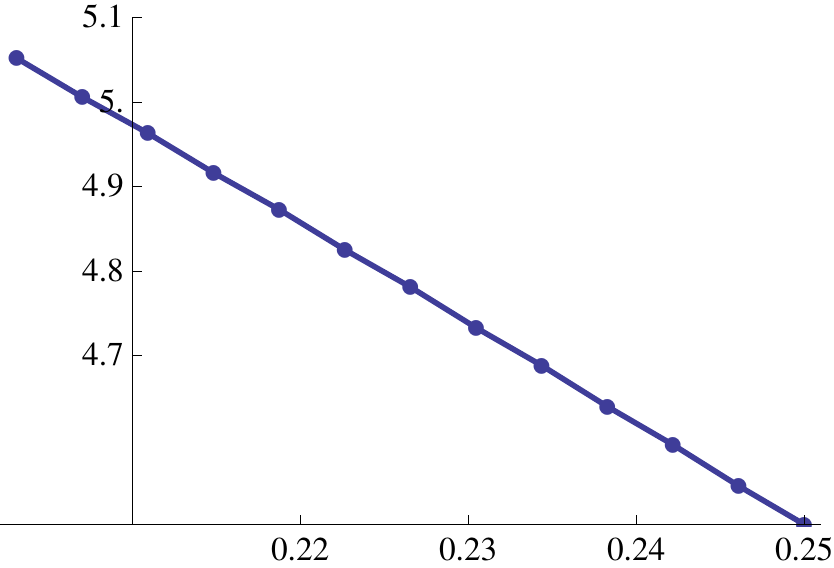} 
\\
$\delta^{2h}$ & Jump Splice
\end{tabular}
\caption{Visualization of pressure in the interior near $\Gamma$ in Example \ref{ex:ns-2} at $T = 0.25$ along the line $x = 0.5$. Note the high-frequency oscillations in the $\delta^{2h}$ result. Results are from simulation on $256 \times 256$ grid.}
\label{fig:ns-pressure-line}
\end{figure}

Use of smoothed $\delta$ functions also results in non-physical high frequency oscillations in pressure in the vicinity of the interface. Figure \ref{fig:ns-pressure-line} shows again an $x = 0.5$ cross-section of pressure from $T = 0.25$ in Example \ref{ex:ns-2}, but this time in the interior of $\Gamma$. Whereas the jump spliced pressure is smooth, the $\delta^{2h}$ pressure shows substantial oscillation with frequency scale $h^{-1}$. 

Finally, note that the techniques outlined in the previous sections work equally well to solve the incompressible Navier-Stokes equations in 3D. As with all jump splice applications, extension to 3D is as simple as changing the finite difference stencil. Indeed, using the 3D versions of $\Delta^h$, $\nabla^h$, and their fourth-order accurate counterparts in the the algorithm outlined in Section \ref{sec:ns:implementation} results in a second-order accurate algorithm in 3D.

\subsection{Summary}
\label{sec:ns:summary}

The jump splice naturally transforms an approximate projection method into a fully second-order in space method for handling strong discontinuities in both the velocity field and the pressure across the interface. In doing so, we achieve asymptotically optimal complexity of $\bigo{N}$ per time step, where $N$ is the number of grid points. The implementation is straightforward and requires solving no additional linear systems. Moreover, the results are significantly more accurate than the traditional smoothed $\delta$ approach, even on relatively coarse grids, and strong discontinuities are captured sharply.

\section*{Acknowledgements}

This work was supported in part by the Applied Mathematical Science subprogram of the Office of Energy Research, U.S. Department of Energy, under Contract Number DE-AC02-05CH11231, and by the Computational Mathematics Program of the National Science Foundation. Some computations used the resources of the National Energy Research Scientific Computing Center, which is supported by the Office of Science of the US Department of Energy under Contract No. DE-AC02-05CH11231. B.P. was also supported by the National Science Foundation Graduate Research Fellowship under Grant Number DGE 1106400.

\clearpage

\section{Appendix}
\label{sec:appendix}

First we show that for $(\Delta^h u)_{i,j}$ to be a second-order accurate approximation to $(\Delta u)(\bfx_{i,j})$, it is enough that $u \in LC^3(U)$ for some open set $U$ containing the cross of the stencil of $\Delta^hu$ at $\bfx_{i,j}$.

\begin{proposition}
\label{prop:laplacian-lipschitz}
Provided that $u \in LC^3(U)$, where $U$ is an open neighborhood of $C_{i,j}$, we have
\[(\Delta u)(\bfx_{i,j}) = (\Delta^h u)_{i,j} + \bigo{h^2}.\]
\end{proposition}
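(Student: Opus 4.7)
The plan is to Taylor-expand each of the four neighboring values $u(\bfx_{i,j} \pm h\bfe_1)$ and $u(\bfx_{i,j} \pm h\bfe_2)$ around $\bfx_{i,j}$ and observe that the standard cancellation of odd-order terms in the 5-point Laplacian leaves a leading error of order $h^2$, provided we can control a fourth-order remainder. The challenge, relative to the textbook $C^4$ version, is that we only assume $u \in LC^3(U)$, so $\partial_k^4 u$ need not exist pointwise; we need a remainder estimate that uses only Lipschitz continuity of the third derivatives.

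The key step is to write Taylor's formula with integral remainder truncated at third order along the line segment from $\bfx_{i,j}$ to $\bfx_{i,j} \pm h\bfe_k$, which is contained in $C_{i,j} \subset U$ by hypothesis. Specifically, I would write
\begin{equation*}
u(\bfx_{i,j} + h\bfe_k) = \sum_{j=0}^{3} \frac{h^j}{j!}\partial_k^j u(\bfx_{i,j}) + \int_0^h \frac{(h-t)^2}{2}\bigl(\partial_k^3 u(\bfx_{i,j} + t\bfe_k) - \partial_k^3 u(\bfx_{i,j})\bigr)\, dt,
\end{equation*}
obtained by applying the third-order integral remainder formula and adding and subtracting $\partial_k^3 u(\bfx_{i,j})$ inside the integral. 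Because $\partial_k^3 u$ is Lipschitz on $U$ with some constant $K$, the integrand is bounded in absolute value by $Kt$, and the remainder is $\bigo{h^4}$. The analogous expansion along $-h\bfe_k$ has the same form with $h \mapsto -h$.

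Adding the two expansions, the linear and cubic terms in $h$ cancel exactly, yielding
\begin{equation*}
u(\bfx_{i,j} + h\bfe_k) + u(\bfx_{i,j} - h\bfe_k) - 2u(\bfx_{i,j}) = h^2 \partial_k^2 u(\bfx_{i,j}) + \bigo{h^4}.
\end{equation*}
Dividing by $h^2$ and summing over $k = 1, 2$ gives $(\Delta^h u)_{i,j} = (\Delta u)(\bfx_{i,j}) + \bigo{h^2}$, as desired.

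The main (mild) obstacle is the remainder bound: one might be tempted to invoke Rademacher's theorem to assert that $\partial_k^4 u$ exists almost everywhere and is bounded, then use a pointwise fourth-order Taylor estimate, but this is unnecessary and slightly awkward. Using instead the third-order integral remainder with the Lipschitz difference inside the integral avoids any appeal to almost-everywhere differentiability and relies only on the defining property of $LC^3$, which is the cleanest route. No other step requires care beyond checking that the relevant segments lie in $U$, which is guaranteed since $U \supset C_{i,j}$.
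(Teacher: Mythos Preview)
Your proof is correct and follows essentially the same route as the paper: Taylor-expand along each coordinate axis to third order and use Lipschitz continuity of $\partial_k^3 u$ to control the remainder. The only cosmetic difference is that the paper uses the Lagrange (mean-value) form of the remainder and pairs the $+h$ and $-h$ terms to bound $|\partial_k^3 u(\xi_1) - \partial_k^3 u(\xi_2)| \leq 2Kh$, whereas you use the integral remainder and subtract off $\partial_k^3 u(\bfx_{i,j})$ inside the integral to get an $\bigo{h^4}$ bound on each term separately; both devices are equivalent here.
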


\begin{proof}
Let $\bfx_{i,j} = (x, y)$. Then using Taylor's theorem and that $u \in C^3(C_{i,j})$, we have
\begin{align*}
h^2 (\Delta^h u)_{i,j} - h^2 (\Delta u)(x,y)
&= u(x+h,y) + u(x-h,y) + u(x,y+h) + u(x,y-h) - 4 u(x,y) - h^2 (\Delta u)(x,y) \\
&= \frac{1}{3!}(\partial_x^3 u)(\xi_1,y) h^3 - \frac{1}{3!}(\partial_x^3 u)(\xi_2,y) h^3 + \frac{1}{3!}(\partial_y^3 u)(x,\xi_3) h^3  - \frac{1}{3!}(\partial_y^3 u)(x,\xi_4) h^3,
\end{align*}
where $|x-\xi_k| \leq h$ for $k = 1,2$ and $|y - \xi_k| \leq h$ for $k = 3,4$. Dividing by $h^2$ and using that $\partial_x^3 u$ and $\partial_y^3u$ are Lipschitz continuous with constants $K_x$ and $K_y$, we have
\begin{align*}
|(\Delta^h u)_{i,j} - (\Delta u)(x,y)| 
&\leq \frac{K_x}{3!}|\xi_1 - \xi_2|h + \frac{K_y}{3!}|\xi_3 - \xi_4|h \\
&\leq \left(\frac{2K_x}{3!} + \frac{2K_y}{3!}\right) h^2,
\end{align*}
and this establishes the claim.
\end{proof}

Note we have established Proposition \ref{prop:laplacian-lipschitz} in $\bbr^2$ in order to keep the notation simple; an identical result holds for the 7-point Laplacian in $\bbr^3$. Next, we show that a Lipschitz function defined on each side of the interface can be uniquely extended to a Lipschitz function defined on all of $\Omega$ provided it has zero jump across $\Gamma$.

\begin{proposition}
If $u \in LC(\Omega^+, \Omega^-)$ and $\jump{u} = 0$, then there exists a unique $\tilde u \in LC(\Omega)$ that extends $u$ in the sense that $\subbar{\tilde u}{\Omega^+\cup\Omega^-} = u$.
\label{prop:jump-lipschitz}
\end{proposition}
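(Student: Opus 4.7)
The plan is to construct $\tilde u$ explicitly by defining it on $\Gamma$ via one-sided limits, then verify the Lipschitz property by a case analysis on which side(s) of the interface the two points lie. The zero-jump hypothesis will be used only to check that the extension is well-defined at points of $\Gamma$; everything else will follow from the Lipschitz bounds on each side plus a convexity/intermediate-value argument along segments.

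First I would define $\tilde u = u$ on $\Omega^+ \cup \Omega^-$ and, for $\bfx \in \Gamma$, set $\tilde u(\bfx) = u^+(\bfx)$. The limits $u^\pm(\bfx)$ exist because $u$ is Lipschitz on each side: if $\bfx_n \to \bfx$ with $\bfx_n \in \Omega^\pm$, then $|u(\bfx_n) - u(\bfx_m)| \leq K^\pm |\bfx_n - \bfx_m|$ makes $\{u(\bfx_n)\}$ Cauchy in $\bbr$, and the limit does not depend on the sequence chosen. Since $\jump{u}(\bfx) = u^+(\bfx) - u^-(\bfx) = 0$, the value at $\bfx \in \Gamma$ is the same whether we approach from $\Omega^+$ or from $\Omega^-$, so $\tilde u$ is unambiguously defined. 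Uniqueness is then immediate: any Lipschitz (hence continuous) extension of $u$ must equal these one-sided limits at every point of $\Gamma$.

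Next I would set $K = \max(K^+, K^-)$ and establish the Lipschitz bound $|\tilde u(\bfx) - \tilde u(\bfy)| \leq K|\bfx - \bfy|$ in two cases. In the same-side case, say $\bfx, \bfy \in \overline{\Omega^+}$, I pick sequences $\bfx_n \to \bfx$ and $\bfy_n \to \bfy$ in $\Omega^+$ (approaching along interior segments if necessary), apply $|u(\bfx_n) - u(\bfy_n)| \leq K^+|\bfx_n - \bfy_n|$, and pass to the limit using the defining one-sided limits of $\tilde u$. In the opposite-sides case, with $\bfx \in \Omega^+$ and $\bfy \in \Omega^-$, I use that $\phi(\bfx) > 0$ and $\phi(\bfy) < 0$; by continuity of $\phi$ and the intermediate value theorem applied along the parametrized segment $t \mapsto (1-t)\bfx + t\bfy$, there is some $\bfz \in \Gamma$ lying on this segment. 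Splitting and using the same-side case gives
\[
|\tilde u(\bfx) - \tilde u(\bfy)| \leq |\tilde u(\bfx) - \tilde u(\bfz)| + |\tilde u(\bfz) - \tilde u(\bfy)| \leq K\bigl(|\bfx - \bfz| + |\bfz - \bfy|\bigr) = K|\bfx - \bfy|,
\]
where the final equality is the crucial fact that $\bfz$ lies \emph{on} the segment joining $\bfx$ and $\bfy$, so the two subsegment lengths sum to $|\bfx - \bfy|$.

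The main obstacle is the opposite-sides step: it needs the segment from $\bfx$ to $\bfy$ to remain in $\Omega$ and the intermediate crossing point $\bfz$ to be an element of $\Gamma$ rather than $\partial\Omega$. For the rectangular $\Omega$ used throughout the paper, convexity handles the first requirement, and the second is automatic because $\phi$ changes sign strictly in the interior. For a more general non-convex $\Omega$ one would replace straight segments by rectifiable paths and compare Euclidean to geodesic distance, but this refinement does not arise in our setting.
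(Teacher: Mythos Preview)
Your proof is correct and follows essentially the same route as the paper's: extend $u$ to $\Gamma$ via one-sided limits (well-defined by the zero-jump hypothesis), then verify the global Lipschitz bound by splitting the segment between opposite-side points at an interface crossing and using that the subsegment lengths sum to the full distance. If anything, you are slightly more careful than the paper in explicitly handling the same-side case with boundary points via limits and in flagging the convexity assumption on $\Omega$ needed to keep the segment inside the domain.
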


\begin{proof}
Lipschitz continuity implies uniform continuity, so $u$ is uniformly continuous in both $\Omega^+$ and $\Omega^-$. In particular, $\subbar{u}{\Omega^+}$ can be continuously extended to a function $u^+ \in LC(\overline{\Omega^+})$ and $\subbar{u}{\Omega^-}$ can be similarly extended to $u^- \in LC(\overline{\Omega^-})$. The condition $\jump{u} = 0$ says precisely that $u^+ = u^-$ on $\Gamma$.

Now, consider $\bfx \in \Omega^+$ and $\bfy \in \Omega^-$. Assume for now that the line segment $L = \{ t\bfx + (1-t)\bfy : 0 \leq t \leq 1\}$ intersects $\Gamma$ only once, and let $\bfz$ be the point of intersection. Then $u^+(\bfz) = u^-(\bfz)$, and
\begin{align*}
|u(\bfx) - u(\bfy)|
&= |u^+(\bfx) - u^+(\bfz) + u^-(\bfz) - u^-(\bfy)| \\
&\leq |u^+(\bfx) - u^+(\bfz)| + |u^-(\bfz) - u^-(\bfy)| \\
&\leq K^+ |\bfx - \bfz| + K^-|\bfz - \bfy| \\
&\leq \max\{K^+, K^-\} |\bfx - \bfy|,
\end{align*}
where the last step follows because $\bfz$ lies on the line $L$ between $\bfx$ and $\bfy$. In the case that $L$ intersects $\Gamma$ multiple times, we repeat this process for each point of intersection, and the result remains the same.

Finally, define $\tilde u$ to be equal to $u$ on $\Omega^+ \cup \Omega^-$ and equal to $u^+$ (equivalently, $u^-$) on $\Gamma$. The previous inequality shows that $\tilde u \in LC(\Omega)$ as stated.
\end{proof}

Proposition \ref{prop:jump-lipschitz} is needed to prove the more general result of Proposition \ref{prop:jump-lipschitz-iterated}, which was stated in Section \ref{sec:splice:splice}. In particular, we show that a function $u$ with Lipschitz derivatives up to order $k$ on each side of the interface can be extended to a function with the same property defined on all of $\Omega$.

\begin{proof}[\textbf{Proof of Proposition \ref{prop:jump-lipschitz-iterated}}]
Here we establish the result assuming that $\Gamma$ is $C^2$, and thus the signed distance function $\phi \in LC^2(\Gamma_\epsilon)$ for $\epsilon$ sufficiently small.

From Proposition \ref{prop:jump-lipschitz}, we obtain $\tilde u \in LC(\Omega)$. If $k = 0$, then we are done. Otherwise, recall that
\[ \nabla u = \nabla_s u + (\partial_\bfn u) \bfn, \]
and in particular that
\[\jump{\nabla u} = \nabla_s \jump{u} + \jump{\partial_\bfn u} \bfn. \]
Thus for $k \geq 1$, we have $\jump{\nabla u} = 0$, as $\jump{u} = \jump{\partial_\bfn u} = 0$, and we can apply Proposition \ref{prop:jump-lipschitz} again to $\nabla u$ to obtain $\widetilde{\nabla u} \in LC(\Omega)$. 

Fix $\bfx \in \Gamma$ and let $\bfh \in \bbr^d$. Define $\bfgamma(t) = \bfx + t\bfh$. Because $\phi \in LC^2(\Gamma_\epsilon)$, for $|\bfh|$ sufficiently small we have,
\[ \phi(\bfgamma(t)) = (\bfn(\bfx) \cdot \bfh)t + \frac{1}{2}(\bfh \cdot \nabla \bfn(x) \cdot \bfh)t^2 + \bigo{t^3|\bfh|^3},\]
where $\bfn = \nabla \phi$. Provided that either $\bfn(\bfx) \cdot \bfh \neq 0$ or $\bfh \cdot \nabla \bfn(x) \cdot \bfh \neq 0$, then for sufficiently small $|\bfh|$, we have $\phi(\bfgamma(t)) \neq 0$ for $t \in (0,1)$. It follows that $\bfgamma(t)$ lies entirely in either $\Omega^+$ or $\Omega^-$ for $t \in (0,1)$, and thus we can apply the mean value theorem to $\tilde u(\bfgamma(t))$ and obtain
\begin{align*}
\left|\tilde u(\bfx+\bfh) - \tilde u(\bfx) - \widetilde{\nabla u}(\bfx) \cdot \bfh\right|
&= \left|\nabla \tilde u(\bfxi) \cdot \bfh - \widetilde{\nabla u}(\bfx) \cdot \bfh\right| \\
&= \left|\widetilde{\nabla u}(\bfxi) \cdot \bfh - \widetilde{\nabla u}(\bfx) \cdot \bfh\right| \\
& \leq K|\bfh|^2,
\end{align*}
where $\bfxi = \bfx + t\bfh$ for some $t \in (0,1)$, $K$ is the Lipschitz constant for $\widetilde{\nabla u}$, and we have made use of the observation that $\nabla u(\bfxi) = \nabla \tilde u(\bfxi) = \widetilde{\nabla u}(\bfxi)$ as $\bfxi \notin \Gamma$. In the case that both $\bfn(\bfx) \cdot \bfh = 0$ and $\bfh \cdot \nabla \bfn(x) \cdot \bfh = 0$, we can instead apply the mean value theorem to $\bfgamma(t) + t|\bfh|^2\bfn(\bfx)$ and the conclusion remains the same, up to a constant, as $\tilde u$ is Lipschitz.

This calculation establishes that
\[ \nabla \tilde u(\bfx) = \widetilde{\nabla u}(\bfx), \]
for $\bfx \in \Gamma$, and thus $\nabla \tilde u = \widetilde{\nabla u}$ everywhere. In particular, we have $\nabla \tilde u \in LC(\Omega)$.

Iterating this process up to order $k$ establishes the proposition. Note that the unique $\tilde u$ furnished here is precisely the same as that provided by Proposition \ref{prop:jump-lipschitz}.
\end{proof}

Next, we prove Proposition \ref{prop:v-equivalence}, which was stated in Section \ref{sec:splice:extrapolation}.

\begin{proof}[\textbf{Proof of Proposition \ref{prop:v-equivalence}}]
Let $\zeta = v - \tilde v$. Then $\zeta \in LC^q(\Gamma_\epsilon)$ and $\subbar{\partial_\bfn^i \zeta}{\Gamma} = 0$ for $0\leq i\leq q$. Let $\bfx \in \Gamma_\epsilon$ be arbitrary, and let $\bfy = \bfx - \phi(\bfx)\bfn(\bfx)$ be the closest point to $\bfx$ on $\Gamma$. Then Taylor's theorem provides
\begin{align*}
\zeta(\bfx)
&= \zeta(\bfy + \phi(\bfx)\bfn(\bfx)) \\
&= \sum_{i=0}^{q-1} \frac{\partial_\bfn^i \zeta(\bfy)}{i!} \phi(\bfx)^i + \frac{\partial_\bfn^q \zeta(\bfxi)}{q!} \phi(\bfx)^q
\end{align*}
where $\bfxi = t\bfx + (1-t)\bfy$ for some $t \in (0,1)$ and we have used the fact that $\bfn(\bfx) = \bfn(\bfy)$. But $\bfy \in \Gamma$, so all terms but the last are zero. Moreover, because $\partial_\bfn^q \zeta$ is Lipschitz and $\partial_\bfn^q\zeta(\bfy) = 0$,
\begin{align*}
|\partial_\bfn^q\zeta(\bfxi)|
&= |\partial_\bfn^q\zeta(\bfxi) - \partial_\bfn^q\zeta(\bfy)| \\
&\leq K|\bfxi - \bfy| \\
&\leq K \phi(\bfx), 
\end{align*}
so that, in sum,
\[|\zeta(\bfx)| \leq \frac{K}{q!} \phi(\bfx)^{q+1}, \]
as desired.
\end{proof}

Next, we prove Proposition \ref{prop:modified-v-conditions}, which was stated in Section \ref{sec:splice:calculation}. 

\begin{proof}[\textbf{Proof of Proposition \ref{prop:modified-v-conditions}}]
Clearly the conditions for $g^0$ and $g^1$ are identical between \eqref{eqn:v-conditions} and \eqref{eqn:modified-v-conditions}. Note that, for arbitrary $u$, we can expand $\Delta u$ as
\begin{equation}
\label{eqn:surface-laplacian-expansion}
\Delta u = \Delta_s u + \kappa \partial_\bfn u + \partial_\bfn^2 u,
\end{equation}
where $\kappa = \nabla \cdot \bfn$. Applying this to $u$ and taking jumps, we have
\[ g^2 = g^\Delta - \kappa g^1 - \Delta_s g^0, \]
where we have used that $\jump{\Delta_s u} = \Delta_s g^0$. We can also apply \eqref{eqn:surface-laplacian-expansion} to $v$ and evaluate on $\Gamma$, obtaining
\[ \subbar{\partial_\bfn^2 v}{\Gamma} = g^\Delta - \kappa g^1 - \Delta_s g^0, \]
where we have made use of the fact that $v$ satisfies \eqref{eqn:modified-jump-conditions}. It immediately follows that $\subbar{\partial_\bfn^2 v}{\Gamma} = g^2$, as desired.

Next, note that we can expand $\partial_\bfn\Delta u$ as
\begin{equation}
\label{eqn:dn-laplacian-expansion}
\partial_\bfn\Delta u = \Delta (\partial_\bfn u) - 2\Tr(\nabla\nabla u \cdot \nabla \bfn) - \nabla u \cdot \Delta \bfn.
\end{equation}
Further expanding $\Delta (\partial_\bfn u)$ with \eqref{eqn:surface-laplacian-expansion} and evaluating jumps, we have
\[g^3 = g^{\partial_\bfn \Delta} - \kappa g^2 - \Delta_S g^1 + 2\Tr(\jump{\nabla\nabla u} \cdot \nabla \bfn) + \jump{\nabla u} \cdot \Delta \bfn. \]
Next, we apply \eqref{eqn:dn-laplacian-expansion} to $v$ and obtain
\[ \partial_\bfn^3 v = \partial_\bfn\Delta v - \kappa \partial_\bfn^2 v - \Delta_s (\partial_\bfn v) + 2\Tr(\nabla\nabla v \cdot \nabla \bfn) + \nabla v \cdot \Delta \bfn. \]
Now, because $\nabla\nabla v$ contains derivatives of at most second order and because we have already established that \subbar{\partial_\bfn^i v}{\Gamma} = \jump{\partial_\bfn^i u} for $0 \leq i \leq 2$, it follows that $\subbar{\nabla\nabla v}{\Gamma} = \jump{\nabla\nabla u}$, and similarly that $\subbar{\nabla v}{\Gamma} = \jump{\nabla u}$. Thus we have
\[ \subbar{\partial_\bfn^3 v}{\Gamma} = g^{\partial_\bfn\Delta} - \kappa g^2 - \Delta_s g^1 + \Tr(\jump{\nabla\nabla u} \cdot \nabla \bfn) + \jump{\nabla u} \cdot \Delta \bfn.\]
Comparing these expressions, it immediately follows that $\subbar{\partial_\bfn^3 v}{\Gamma} = g^3$.
\end{proof}

\bibliography{refs}
\bibliographystyle{unsrt}

\end{document}